\allowdisplaybreaks \numberwithin{equation}{section}
\numberwithin{equation}{section}
\newtheorem{theorem}{Theorem}[section]
\newtheorem{lemma}[theorem]{Lemma}
\theoremstyle{definition}
\theoremstyle{remark}
\newtheorem{remark}[theorem]{Remark}
\begin{document}

\title
{On the global classical solutions for the generalized SQG equation}

\author{Daomin Cao, Guolin Qin,  Weicheng Zhan, Changjun Zou}

\address{Institute of Applied Mathematics, Chinese Academy of Sciences, Beijing 100190, and University of Chinese Academy of Sciences, Beijing 100049,  P.R. China}
\email{dmcao@amt.ac.cn}
\address{Institute of Applied Mathematics, Chinese Academy of Sciences, Beijing 100190, and University of Chinese Academy of Sciences, Beijing 100049,  P.R. China}
\email{qinguolin18@mails.ucas.edu.cn}
\address{Institute of Applied Mathematics, Chinese Academy of Sciences, Beijing 100190, and University of Chinese Academy of Sciences, Beijing 100049,  P.R. China}
\email{zhanweicheng16@mails.ucas.ac.cn}

\address{Institute of Applied Mathematics, Chinese Academy of Sciences, Beijing 100190, and University of Chinese Academy of Sciences, Beijing 100049,  P.R. China}
\email{zouchangjun17@mails.ucas.ac.cn}


\begin{abstract}
In this paper, we study the existence of global classical solutions to the generalized surface quasi-geostrophic equation. By using the variational method, we provide some new families of global classical solutions for to the generalized surface quasi-geostrophic equation. These solutions mainly consist of rotating solutions and travelling-wave solutions.
\end{abstract}

\maketitle

\section{Introduction and main results}
In this paper, we consider the generalized surface quasi-geostrophic (gSQG) equation
\begin{align}\label{1-1}
\begin{cases}
\partial_t\vartheta+\mathbf{v}\cdot \nabla \vartheta =0&\text{in}\ \mathbb{R}^2\times (0,T)\\
 \ \mathbf{v}=\nabla^\perp(-\Delta)^{-s}\vartheta     &\text{in}\ \mathbb{R}^2\times (0,T),\\
\end{cases}
\end{align}
where $0< s<1$, $\vartheta(x,t):\mathbb{R}^2\times (0,T)\to \mathbb{R}$ is the active scaler being transported by the velocity field $\mathbf{v}(x,t):\mathbb{R}^2\times (0,T)\to \mathbb{R}^2$ generated by $\vartheta$, and $(a_1,a_2)^\perp=(a_2,-a_1)$.

When $s={1}/{2}$, \eqref{1-1} is the inviscid surface quasi-geostrophic (SQG) equation, which models the evolution of the temperature from a general quasi-geostrophic system for atmospheric and atmospheric flows; see \cite{Con, He, La}. In the particular case $s\uparrow1$ we obtain the vorticity formulation of the two-dimensional incompressible Euler equation; see \cite{MB}. The limiting case $s\downarrow0$ produces stationary solutions. \eqref{1-1} has a strong mathematical and physical analogy with the three-dimensional incompressible Euler equation; see \cite{Con} for more details. For this reason, in recent years, there has been tremendous interest in the gSQG equation.

The purpose of this paper is to construct nontrivial global (classical) solutions of the gSQG equation. It is well known that all radially symmetric functions $\vartheta$ are stationary solutions to the gSQG equation due to the structure of the nonlinear term. An interesting issue is the existence of other global classical solutions. We would like to mention that the Cauchy problem for the gSQG equation is extremely delicate. To our knowledge, the problem of whether the gSQG system presents finite time singularities or there is global well-posedness of classical solutions is still open (see, e.g., \cite{Cas2, Kis1}).

In \cite{Cas2}, Castro et al. provided a first construction of nontrivial global classical solutions of the SQG equation by developing a bifurcation argument from a specific radially symmetric function. The classical solutions constructed in \cite{Cas2} are uniformly rotating solutions that evolve by rotating with constant angular velocity around its center of mass. In \cite{Gra}, Gravejat and Smets put forward an alternative way of constructing  smooth families of special global solutions. They constructed families of smooth travelling-wave solutions to the SQG equation by using the variational method. This result was generalized by Godard-Cadillac \cite{Go0} to the gSQG equation. Recently, Ao et al. \cite{Ao} successfully applied the Lyapunov-Schmidt reduction method to construct rotating and travelling-wave smooth solutions to the gSQG equation. Besides smooth global solutions, there have also been extensive study of uniformly rotating patches (also known as V-states); see, e.g., \cite{Cas1, Has0, Has} and the references therein. As for weak solutions of the gSQG equation, we refer the reader to \cite{Buc, Mar, Res} for some relevant results.

In this paper, we provide some new families of smooth global solutions for the gSQG equation. Before stating our main results, we first explicit the equations satisfied by rotating solutions. We look for rotating solutions $\vartheta$ to \eqref{1-1} under the form
\begin{equation}\label{1-2}
	\vartheta(x,t)=\omega(Q_{-\alpha t}x),
\end{equation}	
where $\omega$ is some profile function defined on $\mathbb{R}^2$, and $Q_{\phi}$ stands for the counterclockwise rotation of angle $\phi$. Recall that the operator $(-\Delta)^{-s}$ is given by the expression
\begin{equation*}
	(-\Delta)^{-s}\omega(x)=\mathcal{G}_s\omega(x)=\int_{\mathbb{R}^2}G_s(x-y)\omega(y)dy,
\end{equation*}
where $G_s$ is the fundamental solution of $(-\Delta)^{s}$ in $\mathbb{R}^2$ given by
\begin{equation*}
	G_s(z)=\frac{c_s}{|z|^{2-2s}},\ \ \ \ c_s=\frac{\Gamma(1-s)}{2^{2s}\pi\Gamma(s)}.
\end{equation*}
If we set
\begin{equation*}
	\mathbf{u}=\nabla^\perp\mathcal{G}_s\omega,
\end{equation*}
then by \eqref{1-1} and \eqref{1-2}, we can recover the velocity field
\begin{equation*}
	\mathbf{v}(x,t)=Q_{\alpha t}\mathbf{u}(Q_{-\alpha t}x)
\end{equation*}
and the first equation in \eqref{1-1} is reduced to a stationary equation
\begin{equation}\label{1-8}
\nabla^\perp(\mathcal{G}_s\omega+\frac{\alpha}{2} |x|^2)\cdot\nabla\omega=0.
\end{equation}
Hence it is natural to introduce the weak formulation of \eqref{1-8}, namely, $\omega$ satisfies
\begin{equation}\label{1-9}
	\int_{\mathbb{R}^2}\omega\nabla^\perp(\mathcal{G}_s\omega+\frac{\alpha}{2} |x|^2)\cdot\nabla \varphi  dx=0, \ \ \ \forall\,\varphi\in C_0^\infty(\mathbb{R}^2).
\end{equation}

The vortex solutions we will construct belong to $L^\infty(\mathbb{R}^2)$ and they are also of compact support. Due to the standard elliptic theory for Riesz potentials the corresponding functions $\mathcal{G}_s\omega\in \dot{W}^{2s,p}(\mathbb{R}^2)$ for any $1<p<\infty$, so the integral in \eqref{1-9} makes sense. Notice that if $s<1/2$, the regularity is not sufficient to provide a rigorous meaning to \eqref{1-9}. For this reason, we shall restrict our attention to $s\in[1/2, 1)$.

As remarked by Arnol'd \cite{Ar1}, a natural way of obtaining solutions to the stationary problem \eqref{1-8} is to impose that $\omega$ and $\mathcal{G}_s\omega+{\alpha} |x|^2/2$ are (locally) functional dependent. More precisely, we may impose that
\begin{equation*}
  \omega=f(\mathcal{G}_s\omega+{\alpha} |x|^2/2)
\end{equation*}
for some Borel measurable function $f:\mathbb{R}\to \mathbb{R}$. For technical reasons, we assume that
\begin{itemize}
\item [($\text{A}$)]\ $f(\tau)=0\ \ \text{for} \ \tau\le 0;\ \ f(\tau)>0\ \ \text{for} \ \tau>0.$
\end{itemize}

We are now in a position to state our main results. Our first main result establishes the existence of co-rotating vortices with $N$-fold symmetry for the gSQG equation. More precisely, we have the following theorem:
\begin{theorem}\label{thm1}
Let $1/2\le s<1$. Let $N\ge2$ be a integer. Suppose that $f$ is a bounded nondecreasing function satisfying (A). Then there exists $\varepsilon_0>0$ such that for any $\varepsilon\in (0,\varepsilon_0]$, \eqref{1-1} has a global rotating solution $\vartheta_\varepsilon (x,t)$ with the following properties:
\begin{itemize}
  \item[(i)]$\vartheta_\varepsilon (x,t)=\omega_{ro, \varepsilon}(Q_{-\alpha_\varepsilon t}x)$, where the angular velocity $\alpha_\varepsilon \in \mathbb{R}$ and $\omega_{ro, \varepsilon}\in L^\infty(\mathbb{R}^2)$ is a weak solution to \eqref{1-8} in the sense of \eqref{1-9}.
  \item[(ii)]$\omega_{ro, \varepsilon}$ is $N$-fold symmetric, namely
      \begin{equation*}
      \omega_{ro, \varepsilon}(x)=\omega_{ro, \varepsilon}\left(Q_{\frac{2\pi}{N}}x\right)\,\,\,\text{for any}\, x\in \mathbb{R}^2.
      \end{equation*}

  \item[(iii)]There holds
       \begin{equation*}
    \omega_{ro, \varepsilon}=\frac{1}{\varepsilon^2} f\circ\psi_{ro, \varepsilon},
       \end{equation*}
   where
      \begin{equation*}
			\psi_{ro, \varepsilon}(x)=\mathcal{G}_s\omega_{ro, \varepsilon}+\frac{\alpha_\varepsilon}{2}|x|^2-\mu_{ro,\varepsilon}
	  \end{equation*}
		for some $\mu_{ro,\varepsilon}\in \mathbb{R}$.
  \item[(iv)] One has,in the sense of measure
      \begin{equation*}
   \omega_{ro, \varepsilon}(x)\rightharpoonup \sum_{k=0}^{N-1}\pmb{\delta}_{Q_{\frac{2k\pi}{N}}(1,0)}\ \ \text{as}\ \ \varepsilon\to 0^+,
 \end{equation*}
 where $\pmb{\delta}(x)$ denotes the standard Dirac mass at the origin. Moreover, there exists a constant $\Lambda_0>0$ independent of $\varepsilon$ such that
 \begin{equation*}
   \text{supp}(\omega_{ro, \varepsilon})\subset \bigcup^{N-1}_{k=0}B_{\Lambda_0\varepsilon}\left(Q_{\frac{2k\pi}{N}}(1,0)\right).
 \end{equation*}
 \item[(v)] As $\varepsilon \to 0^+$, it holds
  \begin{equation}\label{angu}
    \alpha_\varepsilon\to \sum\limits_{k=1}^{N-1}\frac{c_s(1-s)}{|(1,0)-Q_{\frac{2k\pi}{N}}(1,0)|^{2-2s}}.
  \end{equation}
  In addition, concerning $\mu_{ro,\varepsilon}$, we have
    \begin{equation*}
      0<\liminf_{\varepsilon\to0^+}\varepsilon^{2-2s}\mu_{ro,\varepsilon}\le \limsup_{\varepsilon\to0^+}\varepsilon^{2-2s}\mu_{ro,\varepsilon}<+\infty.
    \end{equation*}
\end{itemize}
\end{theorem}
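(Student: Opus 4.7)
The plan is to construct $\omega_{ro,\varepsilon}$ as a constrained maximiser of an energy--Casimir functional on an $N$-fold symmetric admissible class localised near the vertices $x_k:=Q_{\frac{2k\pi}{N}}(1,0)$, in the spirit of the Arnold--Turkington--Burton vortex desingularisation framework. The angular velocity $\alpha_\varepsilon$ and the constant $\mu_{ro,\varepsilon}$ will emerge as Lagrange multipliers for the prescribed angular impulse and mass in one fundamental domain, respectively, while the nonlinearity enters through the Legendre conjugate $F^*$ of $F(t):=\int_0^t f(\tau)\,d\tau$: the identity $(F^*)'=f^{-1}$ turns the Euler--Lagrange equation into $\omega=\varepsilon^{-2}f(\psi)$, and the effective domain of $F^*$ enforces the pointwise bound $\omega\le\|f\|_\infty/\varepsilon^2$ dictated by the ansatz.

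Concretely, fix a small $\delta>0$ and set $D:=\{x\in\mathbb{R}^2:|\arg x|<\pi/N,\ ||x|-1|<\delta\}$, $D_N:=\bigcup_{k=0}^{N-1}Q_{\frac{2k\pi}{N}}D$. Choose constants $m,I>0$ (with $m$ equal to the limiting mass per vortex that makes \eqref{angu} consistent, and $I$ tuned so that the limiting radius is $1$) and introduce
\[
\mathcal{A}_\varepsilon:=\Bigl\{\omega\in L^\infty(\mathbb{R}^2):\omega\ge0,\ \omega\circ Q_{\frac{2\pi}{N}}=\omega,\ \operatorname{supp}\omega\subset\overline{D_N},\ {\textstyle\int_D}\omega\,dx=m,\ {\textstyle\int_D}\tfrac{|x|^2}{2}\omega\,dx=I\Bigr\},
\]
together with the functional
\[
\mathcal{E}_\varepsilon(\omega):=\tfrac12\iint_{\mathbb{R}^2\times\mathbb{R}^2}G_s(x-y)\omega(x)\omega(y)\,dx\,dy-\frac{1}{\varepsilon^2}\int_{\mathbb{R}^2}F^*(\varepsilon^2\omega)\,dx.
\]
The set $\mathcal{A}_\varepsilon$ is weak-$*$ sequentially compact in $L^\infty$ thanks to the $\operatorname{dom}F^*$-induced pointwise bound and compact support; the quadratic form is weak-$*$ continuous since $G_s\in L^p_{\mathrm{loc}}$ for every $p<1/(1-s)$ whenever $s\in[1/2,1)$; and the Casimir term is weak-$*$ lower semicontinuous by convexity of $F^*$. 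A maximiser $\omega_\varepsilon\in\mathcal{A}_\varepsilon$ therefore exists, and the Lagrange multiplier analysis for the mass and impulse constraints yields $\mu_{ro,\varepsilon},\alpha_\varepsilon\in\mathbb{R}$ such that, on $\{\omega_\varepsilon>0\}$,
\[
f^{-1}(\varepsilon^2\omega_\varepsilon)=\mathcal{G}_s\omega_\varepsilon+\tfrac{\alpha_\varepsilon}{2}|x|^2-\mu_{ro,\varepsilon}=:\psi_{ro,\varepsilon},
\]
with $\psi_{ro,\varepsilon}\le 0$ off the support, provided $\operatorname{supp}\omega_\varepsilon\Subset D_N$. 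Inverting delivers (iii); property (i) follows because the resulting $\omega_\varepsilon$ is then a weak solution of \eqref{1-8} in the sense of \eqref{1-9}, and (ii) is built into $\mathcal{A}_\varepsilon$.

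Properties (iv)--(v) follow from a concentration analysis. Testing $\mathcal{E}_\varepsilon$ against sharply localised competitors of the form $c\varepsilon^{-2}\chi_{B_{\Lambda\varepsilon}(x_k)}$ (symmetrised into $\mathcal{A}_\varepsilon$) gives a lower bound $\mathcal{E}_\varepsilon(\omega_\varepsilon)\ge c_1\varepsilon^{-(2-2s)}-C$ for $s\in(1/2,1)$ and its $\log(1/\varepsilon)$ analogue at $s=1/2$; combining with a Riesz rearrangement upper bound on the self-energy forces $\omega_\varepsilon|_D$ to be supported in a disc $B_{\Lambda_0\varepsilon}(z_\varepsilon)$ for some $z_\varepsilon\in D$. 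Reflection symmetrisation across the $x$-axis (which can only increase $\mathcal{E}_\varepsilon$ while preserving all constraints) shows $\arg z_\varepsilon=0$, and the calibration of $I$ forces $|z_\varepsilon|\to 1$; this delivers the support statement and the weak convergence in (iv). For (v), the interior stationarity $\nabla\psi_{ro,\varepsilon}(z_\varepsilon)=0$, together with the fact that the rescaled profile $y\mapsto\varepsilon^2\omega_\varepsilon(z_\varepsilon+\varepsilon y)$ converges to a radially symmetric limit (so its contribution to $\nabla\mathcal{G}_s\omega_\varepsilon(z_\varepsilon)$ is $o(1)$) reduces to the point-vortex balance $\alpha_\varepsilon z_\varepsilon+m\sum_{k=1}^{N-1}\nabla G_s(z_\varepsilon-Q_{\frac{2k\pi}{N}}z_\varepsilon)=o(1)$; substituting $z_\varepsilon\to(1,0)$ and $\nabla G_s(z)=-(2-2s)c_s z/|z|^{4-2s}$ yields \eqref{angu} (and pins $m=1$). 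The two-sided bound on $\varepsilon^{2-2s}\mu_{ro,\varepsilon}$ finally follows by evaluating $\psi_{ro,\varepsilon}$ at $z_\varepsilon$ and using $\mathcal{G}_s\omega_\varepsilon(z_\varepsilon)\asymp\varepsilon^{-(2-2s)}$ while $f^{-1}(\varepsilon^2\omega_\varepsilon(z_\varepsilon))$ remains bounded. The hardest step will be the concentration argument: establishing the $O(\varepsilon)$ support diameter, the non-touching of $\partial D_N$, and the asymptotic radial symmetry of the rescaled profile simultaneously. The nonlocality of $\mathcal{G}_s$ for $s<1$ makes the rearrangement/interaction estimates significantly more delicate than in the classical Euler case, but the explicit Riesz kernel $|z|^{2s-2}$ still allows one to carry through the sharp bathtub-style lower bounds needed to close the argument.
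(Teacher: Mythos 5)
Your overall strategy coincides with the paper's: a constrained maximization of the kinetic energy minus the Legendre conjugate of $F$ over an $N$-fold symmetric class supported near the polygon vertices, with $\alpha_\varepsilon$ and $\mu_{ro,\varepsilon}$ arising as Lagrange multipliers for the angular impulse and mass constraints, followed by a concentration--compactness and rearrangement analysis of the support. However, two steps as you describe them have genuine gaps.

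First, the passage from the maximizer to the Euler--Lagrange relation $\omega_\varepsilon=\varepsilon^{-2}f(\psi_{ro,\varepsilon})$ is not as direct as you assert. Since $f$ is bounded, $J=F^*$ equals $+\infty$ on $(\sup f,\infty)$ and its derivative blows up at the right endpoint of its effective domain, so admissible perturbations are constrained on the set $\{\varepsilon^2\omega_\varepsilon=\sup f\}$ and the first variation only yields the differential inclusion $\psi_{ro,\varepsilon}\in\partial J(\varepsilon^2\omega_\varepsilon)$ on the support. Moreover $f$ is merely nondecreasing and possibly discontinuous (the paper explicitly allows step functions), so $f^{-1}$ is a multivalued graph and ``inverting'' does not by itself give (iii): one must rule out that $\psi_{ro,\varepsilon}$ sits at a jump point of $f$ on a set of positive measure. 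The paper circumvents both difficulties by first solving penalized problems with $f_\lambda=f+\lambda\tau_+^s$ (for which $J_\lambda$ is finite everywhere and $J_\lambda'$ and $f_\lambda$ are inverse graphs), proving bounds uniform in $\lambda$, and passing to the limit $\lambda\to0$, using the strict angular monotonicity of $\psi_\varepsilon$ coming from the Steiner symmetrization to guarantee that its level sets have measure zero. Your proposal omits this step entirely, and it is precisely the step the authors single out as the reason the direct maximization ``seems difficult to deal with directly.''

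Second, your derivation of \eqref{angu} from the ``interior stationarity $\nabla\psi_{ro,\varepsilon}(z_\varepsilon)=0$'' does not close. The concentration point $z_\varepsilon$ is not by construction a critical point of $\psi_{ro,\varepsilon}$, and even at a maximum point the gradient of the self-interaction part of $\mathcal{G}_s\omega_\varepsilon$ scales like $\varepsilon^{2s-3}$ times a quantity that the asymptotic weak-star radial symmetry of the rescaled profile controls only to $o(1)$, far from the $o(\varepsilon^{3-2s})$ needed to make that contribution negligible against the $O(1)$ terms. The paper instead tests the weak formulation with $\varphi(x)=y\cdot x$ localized near $(1,0)$: the self-interaction term then vanishes exactly by the antisymmetry of $\nabla G_s(x-x')$ under the exchange $x\leftrightarrow x'$, and only the interaction with the images $Q_{\frac{2k\pi}{N}}(1,0)$ survives, which yields \eqref{angu}. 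You should replace the pointwise balance by this integral identity; the rest of your concentration argument (lower energy bound via localized competitors, bathtub/Riesz rearrangement upper bound, exclusion of vanishing and dichotomy, reflection symmetrization to place $z_\varepsilon$ on the axis) matches the paper's Lemmas in Sections 2.4--2.6.
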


\begin{remark}
  We mention that the nonlinearity $f$ is allowed to be discontinuous. Because of the monotonicity assumption, it can only have at most countable points of jump discontinuities. If $f$ is sufficiently smooth, then according to standard elliptic theory, the solutions in Theorem \ref{thm1} are actually global classical solutions of the gSQG equation. Ao et al. \cite{Ao} considered the case when $f(\tau)=\tau_+^p$ with $1<p<\frac{1+s}{1-s}$. Their construction seems to rely in essential way on the $p$-power form of $f$, for example, it relies on the existence, uniqueness and asymptotic behavior of the ground state solution to the fractional plasma equation established by Chan et al. \cite{Chan}. We construct the solutions by the variational method, which does not rely on the properties of the limit equation (see section \ref{Sec2} below). If $f$ is a step function, then $\vartheta_\varepsilon$ is a uniformly rotating patch with $N$-fold symmetry. In \cite{HM}, Hmidi and Mateu established the existence of co-rotating and counter-rotating vortex pairs of simply connected patches for the gSQG equation by using the contour dynamics equations. Later, co-rotating vortex patches with $N$-fold symmetry for the gSQG equation was obtained by Garc\'ia \cite{Gar2} and Godard-Cadillac et al. \cite{Go}. However, in the absence of comprehensive uniqueness theory, the correspondences between the solutions constructed by the various methods remains unclear.
\end{remark}

\begin{remark}
  In the limit $\varepsilon \to 0^+$, we obtain a desingularization of $N$ point vortices located at the vertex of a regular polygon with $N$ sides (also called Thomson polygon). The study of equal point vortices located at the vertices of a polygon, which rotates around its center, can be traced back to the work of Lord Kelvin 1878 and Thomson 1883 (see, e.g., \cite{Kur, New}). We remark that the limiting angular velocity in \eqref{angu} does correspond exactly to the speed of rotation of $N$ point vortices located at the vertex of a regular polygon with $N$ sides evolving according to the gSQG equation, see \cite{Gar2}.
\end{remark}

Our second main result concerns the existence of travelling-wave solutions. Before stating the result, let us derive the equation satisfied by travelling-wave solutions. Up to a rotation, we may assume, without loss of generality, that these waves have a negative speed $-W$ in the vertical direction, so that
 \begin{equation*}
   \vartheta(x,t)=\omega(x_1, x_2+Wt).
 \end{equation*}
In this setting, the first equation in \eqref{1-1} is also reduced to a stationary equation
\begin{equation}\label{t1-1}
\nabla^\perp(\mathcal{G}_s\omega-Wx_1)\cdot\nabla\omega=0,
\end{equation}
which has a weak form
\begin{equation}\label{t1-2}
	\int_{\mathbb{R}^2}\omega\nabla^\perp(\mathcal{G}_s\omega-Wx_1)\cdot\nabla \varphi dx=0, \ \ \ \forall\,\varphi\in C_0^\infty(\mathbb{R}^2).
\end{equation}
Similarly, a natural way of obtaining solutions to the stationary
problem \eqref{t1-1} is to impose that $\omega$ and $\mathcal{G}_s\omega-Wx_1$ are (locally) functional dependent, namely,
\begin{equation*}
  \omega=f(\mathcal{G}_s\omega-Wx_1).
\end{equation*}
\begin{theorem}\label{thm2}
Let $1/2\le s<1$. Let $W>0$ be given. Suppose $f$ is a bounded nondecreasing function satisfying (A). Then there exists $\varepsilon_0>0$ such that for any $\varepsilon\in (0,\varepsilon_0]$, \eqref{1-1} has a global travelling-wave solution $\vartheta_\varepsilon (x,t)$ with the following properties:
\begin{itemize}
  \item[(i)]$\vartheta_\varepsilon (x,t)=\omega_{tr,\varepsilon}(x_1,x_2+Wt)$, where $\omega_{tr,\varepsilon}\in L^\infty(\mathbb{R}^2)$ is a weak solution to \eqref{t1-1} in the sense of \eqref{t1-2}.
  \item[(ii)]$\omega_{tr,\varepsilon}$ is an odd function with respect to the variable $x_1$. That is,
  \begin{equation*}
    \omega_{tr,\varepsilon}(x_1,x_2)=-\omega_{tr,\varepsilon}(-x_1,x_2),\ \ \ \forall\,x\in \mathbb{R}^2.
  \end{equation*}
  \item[(iii)]There holds
       \begin{equation*}
    \omega_{tr, \varepsilon}=\frac{1}{\varepsilon^2} f\circ\psi_{tr,\varepsilon},
       \end{equation*}
   where
      \begin{equation*}
			\psi_{tr, \varepsilon}(x)=\mathcal{G}_s\omega_{tr, \varepsilon}-Wx_1-\mu_{tr,\varepsilon}
	  \end{equation*}
		for some $\mu_{tr,\varepsilon}\in \mathbb{R}$.
  \item[(iv)] One has, in the sense of measure
		\begin{equation*}
   \omega_{\text{tr}, \varepsilon}(x)\rightharpoonup \pmb{\delta}(x-b_1)-\pmb{\delta}(x-b_2)\ \ \ \text{as}\ \ \varepsilon\to 0^+,
 \end{equation*}
 where
		\begin{equation*}
			d=\left(\frac{1}{4\pi W}\frac{\Gamma(2-s)}{\Gamma(s)}\right)^{\frac{1}{3-2s}}, \ b_1=d\mathbf{e}_1,\ b_2=-d\mathbf{e}_1,\ \mathbf{e}_1=(1,0).
		\end{equation*}
Moreover, there exists a constant $\Lambda_1>0$ independent of $\varepsilon$ such that
 \begin{equation*}
   \text{diam}\left(\text{supp}(\omega_{tr,\varepsilon})\cap \mathbb{R}^2_+  \right)\le \Lambda_1\varepsilon.
 \end{equation*}
 \item[(v)]In addition, concerning $\mu_{tr,\varepsilon}$, we have
    \begin{equation*}
      0<\liminf_{\varepsilon\to0^+}\varepsilon^{2-2s}\mu_{tr,\varepsilon}\le \limsup_{\varepsilon\to0^+}\varepsilon^{2-2s}\mu_{tr,\varepsilon}<+\infty.
    \end{equation*}
\end{itemize}
\end{theorem}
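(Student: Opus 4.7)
The strategy parallels Theorem \ref{thm1}, but replaces the $N$-fold rotational symmetry and centrifugal potential by an odd symmetry in $x_1$ and the horizontal impulse $\int x_1\omega$. Work in the class of odd-in-$x_1$ functions so the problem reduces to the half-plane $\mathbb{R}^2_+=\{x_1>0\}$. Fix a confinement radius $r_0\in(0,d/2)$ and normalize the half-plane mass to $1$. The admissible class is
\begin{equation*}
\mathcal{M}_\varepsilon = \Bigl\{\omega\in L^\infty(\mathbb{R}^2_+): 0\le \omega \le \varepsilon^{-2}\|f\|_{\infty},\; \mathrm{supp}(\omega)\subset \overline{B_{r_0}(b_1)},\; \int_{\mathbb{R}^2_+}\omega\,dx = 1\Bigr\},
\end{equation*}
each $\omega$ extended to $\mathbb{R}^2$ by odd reflection across the $x_2$-axis, and the penalized energy reads
\begin{equation*}
\mathcal{E}_\varepsilon(\omega) = \frac{1}{2}\int_{\mathbb{R}^2}\omega\,\mathcal{G}_s\omega\,dx - W\int_{\mathbb{R}^2}x_1\omega\,dx - \frac{1}{\varepsilon^{2-2s}}\int_{\mathbb{R}^2_+}F(\varepsilon^{2}\omega)\,dx,
\end{equation*}
where $F(t)=\int_0^t f^{-1}(\tau)\,d\tau$ and $f^{-1}(\tau)=\inf\{\sigma\ge 0: f(\sigma)\ge \tau\}$ is the left-continuous generalized inverse. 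The Euler--Lagrange equation of this constrained problem is exactly $\omega = \varepsilon^{-2}f(\mathcal{G}_s\omega - Wx_1 - \mu_{tr,\varepsilon})$.

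A maximizer $\omega_{tr,\varepsilon}\in\mathcal{M}_\varepsilon$ is produced by the direct method. The quadratic term is weakly-$*$ continuous in $L^\infty$ since supports stay in a fixed compact set; the impulse term is linear; and the penalization is weakly-$*$ lower semicontinuous by convexity of $F$. Testing against variations of the form $\omega+t(\eta-\tilde\eta)$ with $\eta,\tilde\eta\ge 0$ of equal integral (to preserve the mass), and using the standard bathtub rearrangement argument combined with the monotonicity of $f$, yields the pointwise profile identity together with the Lagrange multiplier $\mu_{tr,\varepsilon}\in\mathbb{R}$; the odd reflection extends it to all of $\mathbb{R}^2$.

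The heart of the proof is the asymptotic analysis as $\varepsilon\to 0^+$. Inserting a test profile concentrated in $B_\varepsilon(b_1)$ shows $\sup\mathcal{E}_\varepsilon$ is of order $\varepsilon^{-(2-2s)}$, and matching lower bounds through a Cao--Guo--Peng type iteration, exploiting the convexity gap in $F$ and the splitting of $\mathcal{G}_s\omega_{tr,\varepsilon}$ into self and mirror contributions, produces the diameter estimate $\mathrm{diam}(\mathrm{supp}(\omega_{tr,\varepsilon})\cap\mathbb{R}^2_+)\le \Lambda_1\varepsilon$. The center of mass converges to the interior maximizer of the reduced limit functional
\begin{equation*}
\phi(\xi)=-2W\xi-\frac{c_s}{(2\xi)^{2-2s}},\qquad \xi>0,
\end{equation*}
which is the impulse of a unit vortex at $(\xi,0)$ plus its attractive interaction with the mirror vortex at $(-\xi,0)$. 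The equation $\phi'(\xi)=0$ gives $(2\xi)^{3-2s}=2(1-s)c_s/W$, which after inserting $c_s=\Gamma(1-s)/(2^{2s}\pi\Gamma(s))$ and $(1-s)\Gamma(1-s)=\Gamma(2-s)$ simplifies to $\xi=d$ as in the theorem. The two-sided asymptotic bound on $\mu_{tr,\varepsilon}$ follows by integrating the Euler--Lagrange equation against $\omega_{tr,\varepsilon}$ and combining with the location and diameter estimates.

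The main obstacle is ensuring that the support constraint does not bind. Both $\partial B_{r_0}(b_1)$ and, more delicately, $\partial\mathbb{R}^2_+$ must not be touched by $\mathrm{supp}(\omega_{tr,\varepsilon})$, because the mirror contribution to $\mathcal{G}_s\omega$ loses regularity as the support approaches the $x_2$-axis. This is handled by showing that $\xi=d$ is a strict maximum of $\phi$ on $(0,\infty)$ and transferring this strict stability to the $\varepsilon$-level through the iteration, so that boundary-layer configurations are energetically strictly penalized. Once non-binding is established, the Euler--Lagrange equation holds throughout $\mathbb{R}^2_+$, and the remaining properties of the theorem follow from the profile identity and the asymptotic estimates just obtained.
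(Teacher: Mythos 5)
Your proposal follows essentially the same variational route as the paper: reduce to the half-plane by odd reflection, maximize the kinetic energy minus $W$ times the impulse minus a convex penalization built from the inverse of $f$ over a mass-normalized class supported in a fixed ball around $b_1$, derive the profile identity via an Euler--Lagrange/bathtub argument, localize the support at scale $\varepsilon$, and identify the limit location by comparing the maximizer with its translate centered at $b_1$, which leads to the one-dimensional reduced problem whose critical point is $\xi=d$ (your computation of $d$ from $(2\xi)^{3-2s}=2(1-s)c_s/W$ is correct and matches the paper's characterization of $d$ as the unique minimizer of $G_s(2\tau)/2+W\tau$). Two remarks on where you deviate. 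First, you impose the bound $0\le\omega\le\varepsilon^{-2}\|f\|_\infty$ directly in the admissible class and use weak-$*$ compactness in $L^\infty$, whereas the paper works in $L^{1+1/s}$ and obtains the $L^\infty$ bound a posteriori by maximizing with the coercive penalization $f_\lambda(\tau)=f(\tau)+\lambda\tau_+^{s}$ and sending $\lambda\to0$ (Lemmas \ref{lem2-1}--\ref{lem4}, transplanted to Lemma \ref{lem17}); your Turkington-style shortcut is legitimate, and in fact the conjugate $J$ is $+\infty$ above $\sup f$, so the two constraints coincide. Likewise, the paper obtains the concentration and diameter estimate through Lions' concentration-compactness (Lemmas \ref{lem7}--\ref{lem11}) together with the lower bound on $\mu_{tr,\varepsilon}$ and a pointwise estimate on the stream function (Lemma \ref{lem14}), rather than the ``iteration'' you invoke, but the goal and the mechanism (most of the mass lies in an $O(\varepsilon)$ ball, and $\psi_{tr,\varepsilon}<0$ outside a slightly larger one) are the same.

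One concrete error: your penalization term is scaled as
\begin{equation*}
\frac{1}{\varepsilon^{2-2s}}\int_{\mathbb{R}^2_+}F(\varepsilon^{2}\omega)\,dx ,
\end{equation*}
whose first variation is $\varepsilon^{2s}f^{-1}(\varepsilon^{2}\omega)$, so the resulting Euler--Lagrange relation is $\omega=\varepsilon^{-2}f\bigl(\varepsilon^{-2s}(\mathcal{G}_s\omega-Wx_1-\mu)\bigr)$, not the identity $\omega=\varepsilon^{-2}f(\mathcal{G}_s\omega-Wx_1-\mu)$ claimed in part (iii). The correct scaling is $\varepsilon^{-2}\int F(\varepsilon^{2}\omega)\,dx$, exactly as in the paper's $\mathcal{J}_\varepsilon$; with that single replacement your argument goes through, and the leading order $\varepsilon^{-(2-2s)}$ of the energy then comes entirely from the self-interaction term, as it should.
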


\begin{remark}
  As mentioned earlier, smooth travelling-wave solutions to the gSQG equation were constructed in \cite{Ao, Go0, Gra}. However, we would like to point out that in all of these works the nonlinearity $f$ is assumed to be unbounded. Theorem \ref{thm2} can be taken as a complement to these works.
\end{remark}

\begin{remark}
  In the limit $\varepsilon\to 0^+$, we obtain a desingularization of a pair of point vortices with equal magnitude and opposite signs. In addition, we see that two point vortices with equal magnitude and opposite signs at distance $2d$ exhibit a uniform translating motion with the speed $$W=\frac{\Gamma(2-s)}{4\pi \Gamma(s) d^{3-2s}}.$$ This classical result is well-known in the literature, see \cite{ Ao, March, Ros} for example.
\end{remark}

In the next section, we provide a variational construction of co-rotating vortices with $N$-fold symmetry for the gSQG equation. The proof of Theorem \ref{thm1} is also given in this section. In section \ref{Sec3}, we consider translating vortex pairs for the
gSQG equation and give the proof of Theorem \ref{thm2}.

\section{Construction of co-rotating vortices for the gSQG equation}\label{Sec2}
In this section, we provide a variational construction of co-rotating vortices with $N$-fold symmetry for the gSQG equation. To begin with, we need to introduce some notations. We denote by $B_R(x)$ the open ball in $\mathbb{R}^2$ of center $x$ and radius $R>0$. If $\Omega\subset\mathbb{R}^N$ is measurable then $\text{meas}\,({\Omega})$ denotes the $N$-dimensional Lebesgue measure of $\Omega$. $\textbf{1}_\Omega$ denotes the characteristic function of $\Omega$. The right half-plane is denoted by $\mathbb{R}^2_+:=\{(x_1,x_2)\in \mathbb{R}^2: x_1>0\}$. In the usual polar coordinates $(r,\theta)$, we shall say that nonnegative $\xi\in L^1(\mathbb{R}^2)$ is Steiner symmetric with respect to $\theta$ if $\xi$ for the variable $\theta$ is the unique even function such that
\begin{equation*}
  \xi(r,\theta)>\tau\ \ \ \text{if and only if}\ \ \ |\theta|<\frac{1}{2}\,\text{meas}\left\{\theta'\in (-\pi,\pi):\xi(r,\theta')>\tau \right\},
\end{equation*}
for any positive numbers $r$ and $\tau$, and any $-\pi<\theta<\pi$. We denote by $\text{supp}(\xi)$ the (essential) support of $\xi$. Set
\begin{equation*}
  \mathcal{U}_N:=\Big\{(r\cos\theta,r\sin\theta)\in \mathbb{R}^2:-\frac{\pi}{N}<\theta<\frac{\pi}{N} \Big\}.
\end{equation*}

\subsection{Variational problem}\label{s0}
Consider the kinetic energy of the fluid
\begin{equation*}
  {\text{KE}}_s(\omega):=\frac{1}{2}\int_{\mathbb{R}^2}\int_{\mathbb{R}^2}G_s(x-x')\omega(x)\omega(x')dxdx',
\end{equation*}
and its angular momentum
\begin{equation*}
  L(\omega)=\int_{\mathbb{R}^2}|x|^2\omega(x)dx.
\end{equation*}
The total vorticity is defined by
\begin{equation*}
  M(\omega)=\int_{\mathbb{R}^2}\omega(x)dx.
\end{equation*}

These quantities are conserved for sufficiently regular solutions to \eqref{1-1} (see for example \cite{Buc}). Thanks to the $N$-fold symmetry of the desired solution, the kinetic energy may be rewritten as
\begin{equation*}
  {\text{KE}}_s(\omega)=\frac{N}{2}\int_{\mathcal{U}_N}\int_{\mathcal{U}_N}K_s(x,x')\omega(x)\omega(x')dxdx',
\end{equation*}
where the kernel $K_s$ is given by
\begin{equation*}
  K_s(x,x')=\sum_{k=0}^{N-1}G_s\left(x-Q_{\frac{2k\pi}{N}}x'\right).
\end{equation*}
For more information about kernel $K_s$, we refer to \cite{Go} (see also the appendix below). Based on this observation, we shall restrict the construction to only one vortex inside the angular sector $\mathcal{U}_N$.

Let $f:\mathbb{R}\to \mathbb{R}$ be a bounded nondecreasing function satisfying (A). Let $F(\tau)=\int_{0}^{\tau}f(\tau')d\tau'$. Let $J$ be the conjugate function to $F$ defined by $J(s)=\sup_{t\in \mathbb{R}}\left[st-F(t)\right]$ (see \cite{Roc}). Without loss of generality, hereafter we may assume that $\sup_{\mathbb{R}}f=1$.

We introduce the energy functional
\begin{equation*}
  \mathcal{E}_\varepsilon(\omega)=\frac{1}{2}\int_{\mathcal{U}_N}\int_{\mathcal{U}_N}K_s(x,x')\omega(x)\omega(x')dxdx'-\frac{1}{\varepsilon^2}\int_{\mathcal{U}_N} J(\varepsilon^2\omega(x))dx,
\end{equation*}
and the function
\begin{equation*}
  \mathcal{K}_s\omega(x)=\int_{\mathcal{U}_N}K_s(x,x')\omega(x')dx'.
\end{equation*}
Let
\begin{equation*}
	S:=\left\{(r\cos\theta,r\sin\theta)\in \mathbb{R}^2:\frac{1}{2}\le r\le\frac{3}{2},\ \ -\frac{\pi}{2N}\le\theta\le\frac{\pi}{2N} \right\},
\end{equation*}
and
\begin{equation*}
  \mathcal{A}:=\Big{\{}\omega\in L^{1+\frac{1}{s}}(\mathbb{R}^2): \omega\ge0,\ \text{supp}(\omega)\subseteq S,\ M(\omega)=L(\omega)=1 \Big{\}}.
\end{equation*}
We shall consider the maximization problems:
\begin{equation}\label{maxi}
\mathcal{C}_{\varepsilon}:=\sup\Big{\{} \mathcal{E}_{\varepsilon}(\omega): \omega\in \mathcal{A} \Big{\}}.
\end{equation}
However, such maximization problems do seem difficult to deal with directly. Instead, we will consider the following penalized problems.
\subsection{The penalized problems}\label{s1}
Let
\begin{equation*}
  f_\lambda(\tau)=f(\tau)+\lambda\tau_+^s,\ \ \ \ \ \  F_\lambda(\tau)=\int_{0}^{\tau}f_\lambda(\tau')d\tau'.
\end{equation*}
Let $J_\lambda$ denote the conjugate function to $F_\lambda$. Then
\begin{equation*}
  \lambda \tau_+^s\le f_\lambda(\tau)\le 1+\lambda\tau_+^s.
\end{equation*}
Hence
\begin{equation}\label{2-1}
  \frac{(\tau-1)_+^{1+\frac{1}{s}}}{(1+\frac{1}{s})\lambda^\frac{1}{s}}\le J_\lambda(\tau)\le \frac{\tau_+^{1+\frac{1}{s}}}{(1+\frac{1}{s})\lambda^\frac{1}{s}}.
\end{equation}
Let
\begin{equation*}
  \mathcal{E}_{\varepsilon,\lambda}(\omega)=\frac{1}{2}\int_{\mathcal{U}_N}\int_{\mathcal{U}_N}K_s(x,x')\omega(x)\omega(x')dxdx'-\frac{1}{\varepsilon^2}\int_{\mathcal{U}_N} J_\lambda(\varepsilon^2\omega(x))dx.
\end{equation*}
For convenience, we denote
\begin{equation*}
\begin{split}
    & {E}_s(\omega)=\frac{N}{2}\int_{\mathcal{U}_N}\int_{\mathcal{U}_N}K_s(x,x')\omega(x)\omega(x')dxdx',\ \ \ \\
     & \mathcal{J}_\varepsilon(\omega)=\frac{1}{\varepsilon^2}\int_{\mathbb{R}^2} J(\varepsilon^2\omega(x))dx,\ \ \ \\
     &\mathcal{J}_{\varepsilon, \lambda}(\omega)=\frac{1}{\varepsilon^2}\int_{\mathbb{R}^2} J_\lambda(\varepsilon^2\omega(x))dx.
\end{split}
\end{equation*}
Notice that $K_s(\cdot,\cdot)\in L^q(S\times S)$ for any $1\le q <1/(1-s)$. Hence ${E}_{s}$ is well-defined on $\mathcal{A}$.
Set
\begin{equation*}
  \varpi_\varepsilon=\frac{1}{\varepsilon^2}\textbf{1}_{B_{\varepsilon/\sqrt{\pi}}(a_\varepsilon, 0)},\ \ \text{with}\ \ a_\varepsilon=\sqrt{1-{\varepsilon^2}/{2\pi}}.
\end{equation*}
It is easy to verify that $\varpi_\varepsilon \in \mathcal{A}$ if $0<\varepsilon\le \rho_0:=\sin(\pi/2N)/6\sqrt{\pi}$.
\begin{lemma}\label{lem2-1}
Let $0< s<1$, $0<\varepsilon\le\rho_0$ and $\lambda>0$. Then there exists an $\omega_{\varepsilon,\lambda}\in \mathcal{A}$, which is Steiner symmetric with respect to $\theta$, such that
\begin{equation*}
  \mathcal{E}_{\varepsilon,\lambda}(\omega_{\varepsilon,\lambda})=\sup_{\omega\in \mathcal{A}}\mathcal{E}_{\varepsilon,\lambda}(\omega)<+\infty.
\end{equation*}
\end{lemma}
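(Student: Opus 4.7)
The plan is to apply the direct method of the calculus of variations to produce a maximizer of $\mathcal{E}_{\varepsilon,\lambda}$ over $\mathcal{A}$, then Steiner-symmetrize in $\theta$ to obtain the symmetric one claimed; the penalization $\lambda\tau_+^s$ built into $f_\lambda$ is precisely what provides coercivity in $L^{1+1/s}(S)$. First I would show $\mathcal{C}_{\varepsilon,\lambda}:=\sup_{\mathcal{A}}\mathcal{E}_{\varepsilon,\lambda}<+\infty$. From \eqref{2-1} one gets on $\mathcal{A}$ a lower bound
\[
\mathcal{J}_{\varepsilon,\lambda}(\omega)\ \ge\ c_1(\varepsilon,\lambda)\,\|\omega\|_{L^{1+1/s}(S)}^{1+1/s}-c_2(\varepsilon,\lambda),
\]
while the Hardy--Littlewood--Sobolev inequality applied to the Riesz kernel $G_s$, combined with H\"older's inequality on the bounded set $S$, yields $E_s(\omega)\le C\,\|\omega\|_{L^{1+1/s}(S)}^2$. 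Since $1+1/s>2$ for $s<1$, the penalization dominates the kinetic energy as $\|\omega\|_{L^{1+1/s}}\to\infty$, hence $\mathcal{E}_{\varepsilon,\lambda}$ is bounded above on $\mathcal{A}$.

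Next, take any maximizing sequence $\{\omega_n\}\subset\mathcal{A}$; the previous bound gives a uniform $L^{1+1/s}(S)$ estimate, so by reflexivity I extract a weak limit $\omega_n\rightharpoonup\omega_*$ in $L^{1+1/s}(S)$. The constraints defining $\mathcal{A}$ pass to the weak limit: pointwise nonnegativity and support in $S$ are weakly closed, and $M,L$ are bounded linear functionals on $L^{1+1/s}(S)$ thanks to $|S|<\infty$ and boundedness of $|x|^2$ on $S$. For upper semicontinuity of $\mathcal{E}_{\varepsilon,\lambda}$: the penalty $\mathcal{J}_{\varepsilon,\lambda}$ is convex, hence weakly lower semicontinuous, and the kinetic part $E_s$ is in fact weakly continuous because $\mathcal{K}_s$ is compact from $L^{1+1/s}(S)$ into $C(S)$ (for $p=1+1/s>1/s$, Riesz potentials of compactly supported $L^p$ data are uniformly H\"older continuous on bounded sets). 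Therefore $\mathcal{E}_{\varepsilon,\lambda}(\omega_*)\ge\limsup_n\mathcal{E}_{\varepsilon,\lambda}(\omega_n)=\mathcal{C}_{\varepsilon,\lambda}$, so $\omega_*$ maximizes $\mathcal{E}_{\varepsilon,\lambda}$ over $\mathcal{A}$.

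Finally I would replace $\omega_*$ by its $\theta$-Steiner symmetrization $\omega_{\varepsilon,\lambda}:=\omega_*^{\sharp}$ to obtain the symmetric maximizer. Admissibility is immediate ($S$ is $\theta$-symmetric, $L$ depends only on $r=|x|$, and $M$ is preserved by equimeasurability) and the penalty term $\mathcal{J}_{\varepsilon,\lambda}$ is equimeasurably preserved. The main obstacle is to show $E_s(\omega_*^\sharp)\ge E_s(\omega_*)$: in polar coordinates,
\[
K_s((r,\theta),(r',\theta'))=\sum_{k=0}^{N-1}G_s\!\left(\sqrt{r^2+r'^2-2rr'\cos(\phi-2k\pi/N)}\right),\qquad \phi=\theta-\theta',
\]
depends on $\phi$ only and, after pairing $k$ with $N-k$, is even in $\phi$. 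The $k=0$ term is symmetric decreasing in $|\phi|$, so its contribution does not decrease under rearrangement by the one-dimensional Riesz inequality on $(-\pi,\pi)$. The remaining task is to show that the total kernel remains symmetric decreasing in $|\phi|$ on the arc $|\phi|\le\pi/N$ accessible to functions with $\text{supp}(\omega)\subset S$; this monotonicity (or a suitable polarization/two-point rearrangement substitute) is the one genuinely nontrivial ingredient, and is expected to follow from the properties of $K_s$ collected in \cite{Go} and in the appendix below.
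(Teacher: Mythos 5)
Your proposal is correct and follows essentially the same route as the paper: coercivity of $\mathcal{E}_{\varepsilon,\lambda}$ from the $\lambda\tau_+^s$ penalization via \eqref{2-1}, the direct method with weak continuity of $E_s$ and weak lower semicontinuity of the convex penalty, and a final angular Steiner symmetrization justified by the monotonicity of $V_s(r,r',\cdot)$ from \cite{Go} (the paper's Lemma \ref{A2}). The only (immaterial) differences are that the paper bounds $E_s(\omega)$ linearly in $\|\omega\|_{L^{1+1/s}(S)}$ using $M(\omega)=1$ and then applies Young's inequality, rather than your quadratic HLS bound, and it deduces weak continuity of $E_s$ from $K_s\in L^{1+s}(S\times S)$ rather than from compactness of $\mathcal{K}_s$ into $C(S)$.
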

\begin{proof}
For any $\omega\in \mathcal{A}$, by the definition of $K_s$ and H\"older inequality, one verify that there exists a positive constant $C_s$, depending only on $s$, such that
\begin{equation*}
  \|\mathcal{K}_s\omega\|_{L^\infty(\mathbb{R}^2)}\le C_s\|\omega\|_{L^{1+\frac{1}{s}}(S)}.
\end{equation*}
Thus
\begin{equation*}
  E_s(\omega)\le C_s\|\omega\|_{L^1(S)}\|\omega\|_{L^{1+\frac{1}{s}}(S)}=C_s\|\omega\|_{L^{1+\frac{1}{s}}(S)}.
\end{equation*}
Using Young inequality, we have
\begin{equation}\label{2-2}
     E_s(\omega)
        \le C_{s,\varepsilon}\lambda+\frac{\varepsilon^\frac{2}{s}\|\omega\|_{L^{1+\frac{1}{s}}(S)}^{1+\frac{1}{s}}}{4\left(1+\frac{1}{s}\right)\lambda^\frac{1}{s}},
\end{equation}
for some positive constant $C_{s,\varepsilon}$ depending only on $s,\varepsilon$. On the other hand, by \eqref{2-1} we have
\begin{equation}\label{2-3}
\mathcal{J}_{\varepsilon,\lambda}(\omega)\ge \frac{1}{\varepsilon^2}\int_S \frac{(\varepsilon^2\omega-1)_+^{1+\frac{1}{s}}}{\left(1+\frac{1}{s}\right)\lambda^\frac{1}{s}}dx\ge \frac{\varepsilon^\frac{2}{s}\|\omega\|_{L^{1+\frac{1}{s}}(S)}^{1+\frac{1}{s}}}{2\left(1+\frac{1}{s}\right)\lambda^\frac{1}{s}}-\frac{C_{s,\varepsilon}}{\lambda^\frac{1}{s}}.
\end{equation}
Combining \eqref{2-2} and \eqref{2-3}, we get
\begin{equation*}
  \mathcal{E}_{\varepsilon,\lambda}(\omega)\le C_{s,\varepsilon,\lambda},\ \ \ \forall\,\omega\in \mathcal{A}.
\end{equation*}
Hence $\mathcal{E}_{\varepsilon,\lambda}$ is bounded from above on $\mathcal{A}$. Now, we turn to prove the existence of maximizers for $\mathcal{E}_{\varepsilon,\lambda}$ relative to $\mathcal{A}$. By \eqref{2-2} and \eqref{2-3}, we can further deduce that
\begin{equation*}
 \mathcal{E}_{\varepsilon,\lambda}(\omega)\le - \frac{\varepsilon^\frac{2}{s}\|\omega\|_{L^{1+\frac{1}{s}}(S)}^{1+\frac{1}{s}}}{4\left(1+\frac{1}{s}\right)\lambda^\frac{1}{s}}+ C_{s,\varepsilon}\lambda+\frac{C_{s,\varepsilon,\Lambda}}{\lambda^\frac{1}{s}}.
\end{equation*}
By \eqref{2-1}, it is easy to see that
\begin{equation*}
  \mathcal{E}_{\varepsilon,\lambda}(\varpi_\varepsilon)\ge C_{s,\varepsilon}-\frac{\varepsilon^\frac{2}{s}}{(1+\frac{1}{s})\lambda^\frac{1}{s}}\int_S \varpi_\varepsilon^{1+\frac{1}{s}}dx\ge C_{s,\varepsilon}-\frac{C_{s,\varepsilon}}{\lambda^\frac{1}{s}}.
\end{equation*}
Thus, we have
\begin{equation}\label{2-4}
  \|\omega\|_{L^{1+\frac{1}{s}}(S)}\le C_{s,\varepsilon},
\end{equation}
provided $\mathcal{E}_{\varepsilon,\lambda}(\varpi_\varepsilon)\le \mathcal{E}_{\varepsilon,\lambda}(\omega)$ and $0<\lambda\le1$. Therefore there exists a maximizing sequence $\{\omega_j\}_{j\in \mathbb{N}}\subset \mathcal{A}$, which is bounded in $L^{1+\frac{1}{s}}(S)$. Hence, there exists a function $\omega_{\varepsilon,\lambda}\in \mathcal{A}$ such that, up to a subsequence, $\{\omega_j\}_{j\in \mathbb{N}}$ weakly tends to $\omega_{\varepsilon,\lambda}$ in $L^{1+\frac{1}{s}}(S)$. Since $K_s(\cdot,\cdot)\in L^{1+s}(S\times S)$, we have
\begin{equation*}
  \lim_{j\to+\infty}E_s(\omega_j)=E_s(\omega_{\varepsilon,\lambda}).
\end{equation*}
On the other hand, by the lower semi-continuity, we have
\begin{equation*}
  \liminf_{j\to+\infty}\mathcal{J}_{\varepsilon,\lambda}(\omega_j)\ge \mathcal{J}_{\varepsilon,\lambda}(\omega_{\varepsilon,\lambda}).
\end{equation*}
Hence
\begin{equation*}
  \sup_{ \mathcal{A}}\mathcal{E}_{\varepsilon,\lambda}=\limsup_{j\to+\infty}\mathcal{E}_{\varepsilon,\lambda}(\omega_j)\le \mathcal{E}_{\varepsilon,\lambda}(\omega_{\varepsilon,\lambda})\le \sup_{ \mathcal{A}}\mathcal{E}_{\varepsilon,\lambda}.
\end{equation*}
This implies that $\omega_{\varepsilon,\lambda}$ is a maximizer. Moreover, by Lemma \ref{A2} in the appendix, we may further assume that $\omega_{\varepsilon,\lambda}$ is Steiner symmetric with respect to $\theta$. Indeed, if otherwise, then we can replace $\omega_{\varepsilon,\lambda}$ with its own angular Steiner symmetrization, which is still a maximizer. The proof is thus completed.
\end{proof}

\begin{lemma}\label{lem2-2}
There exist two numbers $\alpha_{\varepsilon,\lambda}$ and $\mu_{\varepsilon,\lambda}$ such that
\begin{equation}\label{2-5}
		\omega_{\varepsilon,\lambda}(x)=\frac{1}{\varepsilon^2} f_\lambda(\psi_{\varepsilon,\lambda}(x)),\ \ \text{a.e.}\ x\in S,
\end{equation}
with
\begin{equation*}
  \psi_{\varepsilon,\lambda}(x)=\mathcal{K}_s\omega_{\varepsilon,\lambda}(x)+\frac{\alpha_{\varepsilon,\lambda}}{2}|x|^2-\mu_{\varepsilon,\lambda}.
\end{equation*}
Moreover, $\alpha_{\varepsilon,\lambda}$ and $\mu_{\varepsilon,\lambda}$ are uniquely determined by $\omega_{\varepsilon,\lambda}$.
\end{lemma}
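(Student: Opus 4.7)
The plan is a standard Lagrange-multiplier argument adapted to the constraints defining $\mathcal{A}$: the two linear equalities $M(\omega)=L(\omega)=1$ together with the one-sided condition $\omega\ge 0$. First I would verify that $\mathcal{E}_{\varepsilon,\lambda}$ is G\^ateaux differentiable at $\omega_{\varepsilon,\lambda}$ in directions $\eta\in L^\infty(S)$, with
\[
D\mathcal{E}_{\varepsilon,\lambda}(\omega_{\varepsilon,\lambda})[\eta]=\int_S\bigl[\mathcal{K}_s\omega_{\varepsilon,\lambda}(x)-f_\lambda^{-1}(\varepsilon^2\omega_{\varepsilon,\lambda}(x))\bigr]\eta(x)\,dx,
\]
where on $\{\omega_{\varepsilon,\lambda}=0\}$ we interpret the integrand using the right derivative $J_\lambda'(0^+)=0$. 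This is rigorous because $J_\lambda$ is convex with $J_\lambda'(\tau)=f_\lambda^{-1}(\tau)$ on $(0,\infty)$, and the bound $f_\lambda^{-1}(\tau)\le(\tau/\lambda)^{1/s}$ combined with $\omega_{\varepsilon,\lambda}\in L^{1+1/s}(S)$ ensures $f_\lambda^{-1}(\varepsilon^2\omega_{\varepsilon,\lambda})\in L^{1+s}(S)$.

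The key step turns maximality into a pointwise identity via a balanced perturbation. Fix two reference functions $\chi_1,\chi_2\in L^\infty(S)$ supported in compact subsets of $\{\omega_{\varepsilon,\lambda}>0\}$ on which $\omega_{\varepsilon,\lambda}$ is bounded below, chosen so that the $2\times 2$ matrix with rows $(\int\chi_i\,dx,\,\int|x|^2\chi_i\,dx)$ is invertible; this is possible because $\{\omega_{\varepsilon,\lambda}>0\}$ has positive planar measure (as $M(\omega_{\varepsilon,\lambda})=1$ with $\omega_{\varepsilon,\lambda}\in L^{1+1/s}$) and $1,|x|^2$ are linearly independent on any such set. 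Given arbitrary $\eta_0\in L^\infty(S)$ supported in $\{\omega_{\varepsilon,\lambda}>0\}$, the perturbation $\eta:=\eta_0+c_1\chi_1+c_2\chi_2$ with $(c_1,c_2)$ uniquely solving $\int\eta\,dx=\int|x|^2\eta\,dx=0$ is two-sided admissible for small $|t|$, so $D\mathcal{E}_{\varepsilon,\lambda}(\omega_{\varepsilon,\lambda})[\eta]=0$. Expanding and absorbing the $c_1,c_2$ contributions into two constants $\alpha_{\varepsilon,\lambda},\mu_{\varepsilon,\lambda}$ yields, for all such $\eta_0$,
\[
\int_S\Bigl[\mathcal{K}_s\omega_{\varepsilon,\lambda}-f_\lambda^{-1}(\varepsilon^2\omega_{\varepsilon,\lambda})+\tfrac{\alpha_{\varepsilon,\lambda}}{2}|x|^2-\mu_{\varepsilon,\lambda}\Bigr]\eta_0\,dx=0,
\]
which by arbitrariness of $\eta_0$ forces the Euler--Lagrange identity $f_\lambda^{-1}(\varepsilon^2\omega_{\varepsilon,\lambda})=\psi_{\varepsilon,\lambda}$ a.e.\ on $\{\omega_{\varepsilon,\lambda}>0\}$; applying $f_\lambda$ gives \eqref{2-5} there.

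To conclude, I would run the same balancing with $\eta_0\ge 0$ supported on $\{\omega_{\varepsilon,\lambda}=0\}$, where only $t>0$ is admissible, so maximality yields $D\mathcal{E}_{\varepsilon,\lambda}(\omega_{\varepsilon,\lambda})[\eta]\le 0$; this produces the inequality $\psi_{\varepsilon,\lambda}\le 0$ a.e.\ on that set, and since $f_\lambda\equiv 0$ on $(-\infty,0]$ by (A), we obtain $f_\lambda(\psi_{\varepsilon,\lambda})=0=\varepsilon^2\omega_{\varepsilon,\lambda}$ there, extending \eqref{2-5} to all of $S$. Uniqueness of $(\alpha_{\varepsilon,\lambda},\mu_{\varepsilon,\lambda})$ follows because the difference of two admissible pairs would give an affine combination $c_0+c_1|x|^2$ vanishing on the planar set $\{\omega_{\varepsilon,\lambda}>0\}$, forcing $c_0=c_1=0$. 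The main technical hurdle is the measure-theoretic construction of $\chi_1,\chi_2$: one must simultaneously guarantee two-sided admissibility $\omega_{\varepsilon,\lambda}+t\eta\ge 0$ for small $|t|$ and invertibility of the $2\times 2$ moment matrix, but both follow by restricting the supports of the $\chi_i$ to disjoint subsets of $\{\omega_{\varepsilon,\lambda}\ge\delta\}$ lying in distinct annular slabs $\{c_i\le|x|^2\le c_i'\}$ of positive planar measure.
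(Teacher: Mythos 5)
Your proposal is correct and follows essentially the same route as the paper: a first-variation argument with perturbations balanced by two reference functions (your $\chi_1,\chi_2$ play exactly the role of the paper's $\phi_1,\phi_2$ on $D_\delta=\{\omega_{\varepsilon,\lambda}\ge\delta\}$), yielding the Euler--Lagrange identity $\psi_{\varepsilon,\lambda}=J_\lambda'(\varepsilon^2\omega_{\varepsilon,\lambda})$ on the set where $\omega_{\varepsilon,\lambda}$ is positive and the complementary inequality $\psi_{\varepsilon,\lambda}\le 0$ where it vanishes, after which the inverse-graph relation between $J_\lambda'$ and $f_\lambda$ and the strict monotonicity of $f_\lambda$ on $(0,\infty)$ give \eqref{2-5} and the uniqueness of the multipliers. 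The only cosmetic difference is that the paper packages both cases into a single one-sided perturbation with $\zeta\ge 0$ off $D_\delta$ and then lets $\delta\to 0$, which is equivalent to your two-case treatment.
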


\begin{proof}
Fix a small positive number $\delta$ such that $D_\delta=\{x\in S: \omega_{\varepsilon,\lambda}(x)\ge \delta \}$ has positive measure. Then there exist two functions
$\phi_1$ and $\phi_2$ in $L^\infty(D_\delta)$ with $M(\phi_1)=L(\phi_2)=1$ and $M(\phi_2)=L(\phi_1)=0$. Let $\zeta\in L^\infty(S)$ be such that $\zeta\ge 0$ on $S\backslash D_\delta$. Consider the test function
\begin{equation*}
  \omega_\tau=\omega_{\varepsilon,\lambda}+\tau\left[\zeta-M(\zeta)\phi_1-L(\zeta)\phi_2 \right],\ \ \ \tau>0.
\end{equation*}
We check that $\omega_\tau\in \mathcal{A}$ for $\tau$ small. Since $\omega_{\varepsilon,\lambda}$ is a maximizer, we have
\begin{equation*}
		0\ge \frac{d}{d\tau}\bigg|_{\tau=0^+}\mathcal{E}_{\varepsilon,\lambda}(\omega_\tau)=\mathcal{E}_{\varepsilon,\lambda}'(\omega_{\varepsilon,\lambda})\zeta-M(\zeta)\mathcal{E}_{\varepsilon,\lambda}'(\omega_{\varepsilon,\lambda})\phi_1-L(\zeta)\mathcal{E}_{\varepsilon,\lambda}'(\omega_{\varepsilon,\lambda})\phi_2,
	\end{equation*}
where the differential $\mathcal{E}_{\varepsilon,\lambda}'(\omega_{\varepsilon,\lambda})$ is given by
\begin{equation*}
  \mathcal{E}_{\varepsilon,\lambda}'(\omega_{\varepsilon,\lambda})\zeta=\int_S\zeta\left(\mathcal{K}_s\omega-J_\lambda'(\varepsilon^2\omega_{\varepsilon,\lambda})\right)dx.
\end{equation*}
Denote
\begin{equation*}
  \alpha_{\varepsilon,\lambda}=2\mathcal{E}_{\varepsilon,\lambda}'(\omega_{\varepsilon,\lambda})\phi_2,\ \ \mu_{\varepsilon,\lambda}=\mathcal{E}_{\varepsilon,\lambda}'(\omega_{\varepsilon,\lambda})\phi_1,\ \ \psi_{\varepsilon,\lambda}=\mathcal{K}_s\omega_{\varepsilon,\lambda}(x)+\frac{\alpha_{\varepsilon,\lambda}}{2}|x|^2-\mu_{\varepsilon,\lambda}.
\end{equation*}
Then we obtain
\begin{equation*}
  \int_S\zeta\left(\psi_{\varepsilon,\lambda}- J_\lambda'(\varepsilon^2\omega_{\varepsilon,\lambda})\right)dx \le 0.
\end{equation*}
Recalling the condition $\zeta\ge 0$ on $S\backslash D_\delta$, we infer that
\begin{equation*}
  \psi_{\varepsilon,\lambda}=J_\lambda'(\varepsilon^2\omega_{\varepsilon,\lambda})\ \ \text{on}\ D_\delta,\ \ \ \text{and}\ \ \ \psi_{\varepsilon,\lambda}\le J_\lambda'(\varepsilon^2\delta)\ \ \text{on}\ S\backslash D_\delta.
\end{equation*}
Letting $\delta\to 0$, we conclude that
\begin{equation*}
  \psi_{\varepsilon,\lambda}=J_\lambda'(\varepsilon^2\omega_{\varepsilon,\lambda})\ \ \text{on}\ S.
\end{equation*}
Notice that $J_\lambda'(\cdot)$ and $f_\lambda(\cdot)$ are inverse graphs (see \cite{Roc}). It follows that
\begin{equation*}
  \omega_{\varepsilon,\lambda}=\frac{1}{\varepsilon^2} f_\lambda(\psi_{\varepsilon,\lambda})\ \ \text{on}\ S.
\end{equation*}
Finally, we show the uniqueness of $\alpha_{\varepsilon,\lambda}$ and $\mu_{\varepsilon,\lambda}$. Indeed, if there are $\bar{\alpha}_{\varepsilon,\lambda}$ and $\bar{\mu}_{\varepsilon,\lambda}$ such that \eqref{2-5} holds. Then
\begin{equation*}
  \mathcal{K}_s\omega_{\varepsilon,\lambda}(x)+\frac{\alpha_{\varepsilon,\lambda}}{2}|x|^2-\mu_{\varepsilon,\lambda}=\mathcal{K}_s\omega_{\varepsilon,\lambda}(x)+\frac{\bar{\alpha}_{\varepsilon,\lambda}}{2}|x|^2-\bar{\mu}_{\varepsilon,\lambda}
\end{equation*}
for all $x\in \text{supp}(\omega_{\varepsilon,\lambda})$. That is
\begin{equation*}
 \frac{\alpha_{\varepsilon,\lambda}}{2}|x|^2-\mu_{\varepsilon,\lambda}=\frac{\bar{\alpha}_{\varepsilon,\lambda}}{2}|x|^2-\bar{\mu}_{\varepsilon,\lambda}
\end{equation*}
for all $x\in \text{supp}(\omega_{\varepsilon,\lambda})$. Hence we must have $\alpha_{\varepsilon,\lambda}=\bar{\alpha}_{\varepsilon,\lambda}$ and $\mu_{\varepsilon,\lambda}=\bar{\mu}_{\varepsilon,\lambda}$. The proof is thus complete.
\end{proof}

\subsection{Uniformly boundedness in the limit $\lambda\to0$}\label{s2}
Our aim is now to construct solutions of the maximization problems \eqref{maxi} as limits when $\lambda \to 0$ of the function $\omega_{\varepsilon,\lambda}$ of Lemma \ref{lem2-1}. To this end, we first need to establish some uniform estimates with respect to $\lambda$.
\begin{lemma}\label{lem3}
  Let $0< s<1$ and $0<\varepsilon<\min\{\rho_0,\sqrt{7\pi f(1)/72N}\}$. There exists a positive number $\lambda_0$ such that 
  \begin{equation*}
    |\alpha_{\varepsilon,\lambda}|+|\mu_{\varepsilon,\lambda}|+\|\psi_{\varepsilon,\lambda}\|_{L^\infty(\mathbb{R}^2)}+\|\omega_{\varepsilon,\lambda}\|_{L^\infty(S)}\le C
  \end{equation*}
  for any $0<\lambda\le \lambda_0$, where $C$ is a positive number independent of $\lambda$.
\end{lemma}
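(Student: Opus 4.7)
The strategy is to reduce the four uniform bounds to uniform control on the two Lagrange multipliers $\alpha_{\varepsilon,\lambda}$ and $\mu_{\varepsilon,\lambda}$. Indeed, combined with a uniform $L^\infty$-bound on $\mathcal{K}_s\omega_{\varepsilon,\lambda}$, the identity $\psi_{\varepsilon,\lambda}=\mathcal{K}_s\omega_{\varepsilon,\lambda}+\tfrac{\alpha_{\varepsilon,\lambda}}{2}|x|^{2}-\mu_{\varepsilon,\lambda}$ yields $\|\psi_{\varepsilon,\lambda}\|_{L^\infty(\mathbb{R}^2)}\leq C$, and since $f_\lambda(\tau)\leq 1+\lambda\tau_+^{s}$, the representation $\omega_{\varepsilon,\lambda}=\varepsilon^{-2}f_\lambda(\psi_{\varepsilon,\lambda})$ gives $\|\omega_{\varepsilon,\lambda}\|_{L^\infty(S)}\leq \varepsilon^{-2}(1+\lambda_{0}C^{s})$ for $\lambda\in(0,\lambda_{0}]$. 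The $L^\infty$-bound on $\mathcal{K}_s\omega_{\varepsilon,\lambda}$ itself is immediate: estimate \eqref{2-4} of Lemma \ref{lem2-1} provides $\|\omega_{\varepsilon,\lambda}\|_{L^{1+1/s}(S)}\leq C_{s,\varepsilon}$ uniformly for $\lambda\in(0,1]$, and since $(2-2s)(1+s)=2-2s^{2}<2$, H\"older's inequality together with the uniform kernel bound $\sup_{x\in\mathbb{R}^{2}}\|K_s(x,\cdot)\|_{L^{s+1}(S)}<\infty$ yields $\|\mathcal{K}_s\omega_{\varepsilon,\lambda}\|_{L^\infty(\mathbb{R}^{2})}\leq C_{s,\varepsilon}$.

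The essential tool for bounding $\alpha_{\varepsilon,\lambda},\mu_{\varepsilon,\lambda}$ is a super-level set estimate derived from \eqref{2-5}. Since $f$ is nondecreasing, on $\{\psi_{\varepsilon,\lambda}>t\}$ one has $\omega_{\varepsilon,\lambda}\geq f(t)/\varepsilon^{2}$, and the moment constraints $M(\omega_{\varepsilon,\lambda})=L(\omega_{\varepsilon,\lambda})=1$ therefore yield, for every $t>0$ with $f(t)>0$,
\[
\big|\{\psi_{\varepsilon,\lambda}>t\}\big|\leq \frac{\varepsilon^{2}}{f(t)},\qquad \int_{\{\psi_{\varepsilon,\lambda}>t\}}|x|^{2}\,dx\leq \frac{\varepsilon^{2}}{f(t)}.
\]
Choosing $t=1$ and invoking the hypothesis $\varepsilon^{2}<7\pi f(1)/(72N)$, the set $\{\psi_{\varepsilon,\lambda}>1\}$ occupies at most the fraction $7/72$ of $|S|=\pi/N$. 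From $\mathcal{K}_s\omega_{\varepsilon,\lambda}\geq 0$ and $|x|^{2}\in[1/4,9/4]$ on $S$, this already forces $\mu_{\varepsilon,\lambda}\geq -C_{s,\varepsilon}(1+|\alpha_{\varepsilon,\lambda}|)$, since otherwise $\psi_{\varepsilon,\lambda}>1$ would hold on all of $S$, in violation of the level-set bound. To close the estimate, I invoke the Steiner symmetry granted by Lemma \ref{lem2-1}: the radial projection $I_{0}\subset[1/2,3/2]$ of $\operatorname{supp}(\omega_{\varepsilon,\lambda})$ along the ray $\theta=0$ is a union of intervals whose endpoints in $(1/2,3/2)$ are zeros of $\psi_{\varepsilon,\lambda}(\cdot,0)$. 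A sharper use of the level-set estimate, together with the smallness of $\varepsilon$, confines $I_{0}$ to a compact subinterval $[r_{-},r_{+}]\Subset(1/2,3/2)$ with $r_{+}-r_{-}\geq c_{s,\varepsilon}>0$. The two equations $\mathcal{K}_s\omega_{\varepsilon,\lambda}(r_\pm,0)+\tfrac{\alpha_{\varepsilon,\lambda}}{2}r_\pm^{2}=\mu_{\varepsilon,\lambda}$ then produce
\[
\alpha_{\varepsilon,\lambda}=\frac{2\bigl[\mathcal{K}_s\omega_{\varepsilon,\lambda}(r_{-},0)-\mathcal{K}_s\omega_{\varepsilon,\lambda}(r_{+},0)\bigr]}{r_{+}^{2}-r_{-}^{2}},
\]
which is controlled by $4\|\mathcal{K}_s\omega_{\varepsilon,\lambda}\|_{L^\infty}/(r_{+}^{2}-r_{-}^{2})\leq C_{s,\varepsilon}$, and $\mu_{\varepsilon,\lambda}=\mathcal{K}_s\omega_{\varepsilon,\lambda}(r_{+},0)+\tfrac{\alpha_{\varepsilon,\lambda}}{2}r_{+}^{2}$ inherits a uniform bound.

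\textbf{Main obstacle.} The genuinely delicate step is confining the radial projection $I_{0}$ to a compact subinterval of $(1/2,3/2)$ and keeping $r_{+}-r_{-}$ bounded below uniformly in $\lambda$. This is where the precise threshold $\varepsilon<\sqrt{7\pi f(1)/(72N)}$ enters: it is presumably calibrated so that, after accounting for the maximum possible "high-$\psi$" mass in $\{\psi>1\}$ within the annular sector $S$ and using $M(\omega)=L(\omega)=1$, a definite portion of $S$ must remain in $\{\psi\leq 0\}$, forcing the support strictly inside $S$ both radially and angularly. The bookkeeping of this containment via Steiner symmetry, which reduces matters to the one-dimensional problem on the ray $\theta=0$ under the two moment constraints, is where the explicit numerical constant $7\pi f(1)/(72N)$ arises.
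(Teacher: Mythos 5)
Your opening reduction coincides with the paper's: from \eqref{2-4} one gets a uniform bound on $\|\omega_{\varepsilon,\lambda}\|_{L^{1+1/s}(S)}$, hence on $\|\mathcal{K}_s\omega_{\varepsilon,\lambda}\|_{L^\infty}$, so everything hinges on bounding $\alpha_{\varepsilon,\lambda}$ and $\mu_{\varepsilon,\lambda}$; and the level-set estimate $|\{\psi_{\varepsilon,\lambda}>1\}|\le\varepsilon^2/f(1)<7\pi/(72N)$ is exactly how the hypothesis on $\varepsilon$ enters. But the step you yourself label the ``main obstacle'' is the entire content of the lemma, and the mechanism you propose for it does not go through at this stage. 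You need $\operatorname{supp}(\omega_{\varepsilon,\lambda})$ to be compactly contained in the open sector so that the radial projection $I_0$ has \emph{two} interior endpoints $r_\pm$ that are zeros of $\psi_{\varepsilon,\lambda}(\cdot,0)$, with $r_+-r_-\ge c_{s,\varepsilon}>0$ uniformly in $\lambda$. Nothing proved so far rules out, say, $I_0=[1/2,\,1/2+c]$ abutting the inner boundary of $S$, in which case there is only one interior zero and your $2\times2$ system for $(\alpha_{\varepsilon,\lambda},\mu_{\varepsilon,\lambda})$ degenerates; likewise no lower bound on the separation of two such zeros is available. In the paper, localization of the support away from $\partial S$ is only obtained much later (Section 2.4, via concentration-compactness) and only in the limit $\varepsilon\to0^+$ for $\omega_\varepsilon$ — it is not an input to this lemma.

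The paper's actual argument sidesteps support localization entirely. Arguing by contradiction from $\mu_{\varepsilon,\lambda}\to+\infty$ (which forces $\alpha_{\varepsilon,\lambda}\to+\infty$, else $\psi_{\varepsilon,\lambda}<0$ on $S$ and $M(\omega_{\varepsilon,\lambda})=0$), it produces two \emph{specific} points: a point $y^1$ with $|y^1|\le1$ and $\psi_{\varepsilon,\lambda}(y^1)\ge0$, which exists because otherwise $\omega_{\varepsilon,\lambda}$ would be supported in $\{|x|>1\}$ and then $L(\omega_{\varepsilon,\lambda})>M(\omega_{\varepsilon,\lambda})=1$; and a point $y^2$ in the outer ring $S_2=\{3/2-1/6<|x|<3/2\}$ with $\psi_{\varepsilon,\lambda}(y^2)\le1$, which exists because $\operatorname{meas}(S_2)=17\pi/(72N)$ exceeds $\varepsilon^2/f(1)$. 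Since $|y^2|^2-|y^1|^2\ge 7/9$, comparing
\begin{equation*}
\tfrac{\alpha_{\varepsilon,\lambda}}{2}|y^2|^2-\mu_{\varepsilon,\lambda}-C-1\le\psi_{\varepsilon,\lambda}(y^2)-1\le\psi_{\varepsilon,\lambda}(y^1)\le\tfrac{\alpha_{\varepsilon,\lambda}}{2}|y^1|^2-\mu_{\varepsilon,\lambda}+C
\end{equation*}
gives $\alpha_{\varepsilon,\lambda}(|y^2|^2-|y^1|^2)\le 4C+2$, contradicting $\alpha_{\varepsilon,\lambda}\to+\infty$; the lower bound on $\mu_{\varepsilon,\lambda}$ is symmetric. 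If you replace your $r_\pm$ construction with this two-point comparison (which uses only $M=L=1$ and the measure bound you already derived), your proof closes; as written, the central estimate is asserted rather than proved.
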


\begin{proof}
  By \eqref{2-4}, there exists a positive number $C_{s,\varepsilon}$ depending only on $s,\varepsilon$, such that
  \begin{equation*}
    \|\omega_{\varepsilon,\lambda}\|_{L^{1+\frac{1}{s}}(S)}\le C_{s,\varepsilon}.
  \end{equation*}
  It follows that
  \begin{equation*}
    \|\mathcal{K}_s\omega_{\varepsilon,\lambda}\|_{L^\infty(\mathbb{R}^2)}\le C_s \|\omega_{\varepsilon,\lambda}\|_{L^{1+\frac{1}{s}}(S)}\le C_{s,\varepsilon}.
  \end{equation*}
  Hence
  \begin{equation}\label{2-6}
    \frac{\alpha_{\varepsilon,\lambda}}{2}|x|^2-\mu_{\varepsilon,\lambda}-C_{s,\varepsilon}\le \psi_{\varepsilon,\lambda}(x)\le  \frac{\alpha_{\varepsilon,\lambda}}{2}|x|^2-\mu_{\varepsilon,\lambda}+C_{s,\varepsilon},\ \ \ \forall\,x\in \mathbb{R}^2.
  \end{equation}
  Let us first assume that
  \begin{equation*}
    \mu_{\varepsilon,\lambda}\to +\infty,
  \end{equation*}
  as $\lambda\to 0$. In this case, we can suppose that $\mu_{\varepsilon,\lambda}\ge C_{s,\varepsilon}$ for $\lambda$ small enough. It then follows from \eqref{2-6} that $\alpha_{\varepsilon,\lambda} \to +\infty$ as $\lambda\to 0$. Otherwise, $\psi_{\varepsilon,\lambda}$ is non-negative when $\lambda$ is sufficiently small, which is contrary to the constraint $M(\omega_{\varepsilon,\lambda})=1$. Let
    \begin{equation*}
    	S_1:=\left\{x\in S: \frac{1}{2}<|x|<\frac{1}{2}+\frac{1}{6}\right\} \ \ \text{and} \ \ S_2:=\left\{x\in S: \frac{3}{2}-\frac{1}{6}<|x|<\frac{3}{2}\right\}.
    \end{equation*}
  Then
  \begin{equation*}
    \text{meas}\,(S_1)=\frac{7\pi}{72N},\ \ \ \ \text{meas}\,(S_2)=\frac{17\pi}{72N}.
  \end{equation*}
Due to the constraint $L(\omega_{\varepsilon,\lambda})=1$, there exists a point $y^1\in S$ with $|y^1|\le 1$ such that $\psi_{\varepsilon,\lambda}(y^1)\ge 0$. In another direction, since $0<\varepsilon<\sqrt{7\pi f(1)/72N}$, there exists a point $y^2\in S_2$ such that $\psi_{\varepsilon,\lambda}(y^2)\le1$. Otherwise, it will contradict the constraint $M(\omega_{\varepsilon,\lambda})=1$. Hence, by \eqref{2-6}, we have
\begin{equation*}
  \frac{\alpha_{\varepsilon,\lambda}}{2}|y^2|^2-\mu_{\varepsilon,\lambda}-C_{s,\varepsilon}-1\le \psi_{\varepsilon,\lambda}(y^2)-1\le \psi_{\varepsilon,\lambda}(y^1)\le  \frac{\alpha_{\varepsilon,\lambda}}{2}|y^1|^2-\mu_{\varepsilon,\lambda}+C_{s,\varepsilon},
\end{equation*}
  which implies
  \begin{equation*}
    \frac{\alpha_{\varepsilon,\lambda}}{2}\left(|y^2|^2-|y^1|^2 \right)\le 2C_{s,\varepsilon}+1.
  \end{equation*}
  By letting $\lambda\to 0$, we get a contradiction. Hence, $\mu_{\varepsilon,\lambda}$ is bounded from above for $\lambda$ small. One can argue similarly to see that $\mu_{\varepsilon,\lambda}$ is bounded from below for $\lambda$ small. Therefore, we have proved that $\mu_{\varepsilon,\lambda}$ is uniformly bounded for $\lambda$ small. Thanks to the constraint $M(\omega_{\varepsilon,\lambda})=1$, \eqref{2-5} and \eqref{2-6}, the numbers $\alpha_{\varepsilon,\lambda}$ must remain bounded in the limit $\lambda\to 0$. Now, it follows from \eqref{2-6} that there exists a positive number $C$ independent of $\lambda$, such that $|\psi_{\varepsilon,\lambda}(x)|\le C$ for all $x\in \mathbb{R}^2$. Recalling \eqref{2-5}, we conclude that $\|\omega_{\varepsilon,\lambda}\|_{L^\infty(S)}$ remains bounded in the limit $\lambda\to 0$. The proof is thus complete.
\end{proof}

\subsection{Convergence in the limit $\lambda \to 0$}\label{s3} With Lemma \ref{lem3} in hand, we can put forward a compactness argument in order to take the limit $\lambda \to 0$.
\begin{lemma}\label{lem4}
  Let $0< s<1$ and $0<\varepsilon<\min\{\rho_0,\sqrt{7\pi f(1)/72N}\}$. Then there exists an $\omega_{\varepsilon}\in \mathcal{A}$, which is Steiner symmetric with respect to $\theta$, such that
\begin{equation*}
  \mathcal{E}_{\varepsilon}(\omega_{\varepsilon})=\sup_{\omega\in \mathcal{A}}\mathcal{E}_{\varepsilon}(\omega)<+\infty.
\end{equation*}
Moreover, there exist two numbers $\alpha_{\varepsilon}$ and $\mu_{\varepsilon}$ such that
\begin{equation}\label{2-7}
		\omega_{\varepsilon}(x)=\frac{1}{\varepsilon^2} f(\psi_{\varepsilon}(x)),\ \ \text{a.e.}\ x\in S,
\end{equation}
with
\begin{equation*}
  \psi_{\varepsilon}(x)=\mathcal{K}_s\omega_{\varepsilon}(x)+\frac{\alpha_{\varepsilon}}{2}|x|^2-\mu_{\varepsilon}.
\end{equation*}
\end{lemma}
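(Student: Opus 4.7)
My plan is to pass to the limit $\lambda\to 0^+$ in the penalized maximizers $\omega_{\varepsilon,\lambda}$ produced by Lemma \ref{lem2-1}, exploiting the uniform estimates of Lemma \ref{lem3}. From Lemma \ref{lem3}, $\{\omega_{\varepsilon,\lambda}\}_{\lambda\le\lambda_0}$ is uniformly bounded in $L^\infty(S)$, all supports lie in $S$, and the multipliers $\alpha_{\varepsilon,\lambda},\mu_{\varepsilon,\lambda}$ remain bounded. I extract $\lambda_j\downarrow 0$ with $\omega_{\varepsilon,\lambda_j}\rightharpoonup^*\omega_\varepsilon$ in $L^\infty(S)$ and $(\alpha_{\varepsilon,\lambda_j},\mu_{\varepsilon,\lambda_j})\to(\alpha_\varepsilon,\mu_\varepsilon)$. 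Because the kernel $K_s$ is weakly singular, standard Riesz-potential estimates give $\mathcal{K}_s\omega_{\varepsilon,\lambda_j}\to\mathcal{K}_s\omega_\varepsilon$ uniformly, whence $\psi_{\varepsilon,\lambda_j}\to\psi_\varepsilon:=\mathcal{K}_s\omega_\varepsilon+\tfrac{\alpha_\varepsilon}{2}|x|^2-\mu_\varepsilon$ uniformly on compacts. The inclusion $\omega_\varepsilon\in\mathcal{A}$ is automatic: non-negativity, compact support in $S$, and $M(\omega_\varepsilon)=L(\omega_\varepsilon)=1$ survive weak-$*$ limits when tested against $\mathbf 1_S,\,|x|^2\mathbf 1_S\in L^1(S)$. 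Steiner symmetry in $\theta$ is characterized by a family of linear integral (in)equalities and so is preserved in the weak-$*$ limit.

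To show $\omega_\varepsilon$ maximizes $\mathcal{E}_\varepsilon$ on $\mathcal{A}$, I use, for any $\tilde\omega\in\mathcal{A}$, the sandwich
\[
\mathcal{E}_\varepsilon(\tilde\omega)\le \mathcal{E}_{\varepsilon,\lambda_j}(\tilde\omega)\le \mathcal{E}_{\varepsilon,\lambda_j}(\omega_{\varepsilon,\lambda_j}),
\]
valid because $J_{\lambda_j}\le J$ and $\omega_{\varepsilon,\lambda_j}$ is the penalized maximizer. The left side converges to $\mathcal{E}_\varepsilon(\tilde\omega)$ (monotone/dominated convergence applied to $J_{\lambda_j}\nearrow J$, valid in the extended sense if $\mathcal{E}_\varepsilon(\tilde\omega)=-\infty$). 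On the right, the quadratic piece converges to $\tfrac12\iint K_s\omega_\varepsilon\omega_\varepsilon$ since $\mathcal{K}_s$ acts compactly on the uniformly bounded family $\{\omega_{\varepsilon,\lambda_j}\}$. I decompose
\[
\int J_{\lambda_j}(\varepsilon^2\omega_{\varepsilon,\lambda_j})\,dx =\int J(\varepsilon^2\omega_{\varepsilon,\lambda_j})\,dx -\int [J-J_{\lambda_j}](\varepsilon^2\omega_{\varepsilon,\lambda_j})\,dx,
\]
where the second integral tends to zero by uniform convergence of $J_{\lambda_j}\to J$ on the compact range of $\varepsilon^2\omega_{\varepsilon,\lambda_j}$, and convexity of $J$ yields weak lower semicontinuity of the first integral. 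Consequently $\limsup_j\mathcal{E}_{\varepsilon,\lambda_j}(\omega_{\varepsilon,\lambda_j})\le \mathcal{E}_\varepsilon(\omega_\varepsilon)$, so $\mathcal{E}_\varepsilon(\tilde\omega)\le \mathcal{E}_\varepsilon(\omega_\varepsilon)$. Since $\omega_\varepsilon\in L^\infty(S)$, the value $\mathcal{E}_\varepsilon(\omega_\varepsilon)$ is finite, giving the asserted supremum.

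The Euler--Lagrange identification is obtained by replaying the proof of Lemma \ref{lem2-2} verbatim, with $(\mathcal{E}_\varepsilon,J,f)$ in place of $(\mathcal{E}_{\varepsilon,\lambda},J_\lambda,f_\lambda)$: the same admissible perturbation $\omega_\tau=\omega_\varepsilon+\tau[\zeta-M(\zeta)\phi_1-L(\zeta)\phi_2]$ produces the stationarity inequality forcing $\psi_\varepsilon\in\partial J(\varepsilon^2\omega_\varepsilon)$ a.e.\ on $S$, with Lagrange multipliers $\alpha_\varepsilon$ and $\mu_\varepsilon$ that coincide with the limits above by the uniqueness part of Lemma \ref{lem2-2}. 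Since the maximal monotone graph $\partial J$ is inverse to that of $f$ (Rockafellar \cite{Roc}), this yields $\omega_\varepsilon=\varepsilon^{-2}f(\psi_\varepsilon)$ a.e.\ on $S$.

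The main obstacle is the maximality step, because $\omega_{\varepsilon,\lambda_j}$ is only weakly-$*$ convergent, not strongly; one must therefore exploit both convexity of $J$ (for weak lower semicontinuity) and the one-sided bound $J_\lambda\le J$ so that the two sides of the sandwich push in the correct direction. A secondary subtlety, when $f$ admits jump discontinuities, is that $\partial J$ becomes multi-valued on the (countable) jump set of $f$; the a.e.\ identification $\omega_\varepsilon=\varepsilon^{-2}f(\psi_\varepsilon)$ is then understood after possibly redefining $f$ on this measure-zero set to realize the value singled out by the subgradient inclusion $\varepsilon^2\omega_\varepsilon(x)\in[f(\psi_\varepsilon(x)-),f(\psi_\varepsilon(x)+)]$.
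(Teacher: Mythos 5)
Your overall architecture matches the paper's (extract weak-$*$ limits of the penalized maximizers, prove maximality via the sandwich $\mathcal{E}_\varepsilon(\tilde\omega)\le \mathcal{E}_{\varepsilon,\lambda}(\tilde\omega)\le \mathcal{E}_{\varepsilon,\lambda}(\omega_{\varepsilon,\lambda})$, then identify the Euler--Lagrange relation), but two of your key technical steps have genuine gaps. First, the treatment of $\lim_j\mathcal{J}_{\varepsilon,\lambda_j}(\omega_{\varepsilon,\lambda_j})$: your decomposition requires $[J-J_{\lambda_j}](\varepsilon^2\omega_{\varepsilon,\lambda_j})\to0$ uniformly, but the range of $\varepsilon^2\omega_{\varepsilon,\lambda}=f(\psi_{\varepsilon,\lambda})+\lambda(\psi_{\varepsilon,\lambda})_+^s$ approaches and can exceed $\sup f=1$, which is the boundary of the effective domain of $J$. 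In the patch case $f=\mathbf{1}_{(0,\infty)}$ (explicitly allowed by hypothesis (A)), one has $J\equiv 0$ on $[0,1]$ and $J\equiv+\infty$ on $(1,\infty)$, while $\varepsilon^2\omega_{\varepsilon,\lambda}=1+\lambda(\psi_{\varepsilon,\lambda})_+^s>1$ on the support; your first integral $\int J(\varepsilon^2\omega_{\varepsilon,\lambda_j})\,dx$ is then $+\infty$ and the decomposition collapses. The paper sidesteps this entirely with the Fenchel equality $J_\lambda(\varepsilon^2\omega_{\varepsilon,\lambda})=\varepsilon^2\omega_{\varepsilon,\lambda}\psi_{\varepsilon,\lambda}-F_\lambda(\psi_{\varepsilon,\lambda})$, which rewrites the penalization term as a product of the weak-$*$ convergent $\omega_{\varepsilon,\lambda}$ with the uniformly convergent $\psi_{\varepsilon,\lambda}$, plus a term controlled by uniform convergence of $F_\lambda\to F$; this yields the exact limit $\mathcal{J}_\varepsilon(\omega_\varepsilon)$. (A repair along your lines is possible using monotonicity, $J_{\lambda_j}\ge J_{\lambda_0}$ for $\lambda_j\le\lambda_0$, weak lower semicontinuity of the convex integrand $J_{\lambda_0}$, and then monotone convergence in $\lambda_0$, but that is not what you wrote.)

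Second, the Euler--Lagrange step cannot be obtained by ``replaying Lemma \ref{lem2-2} verbatim'' with $(J,f)$ in place of $(J_\lambda,f_\lambda)$: the computation there differentiates $\tau\mapsto\mathcal{J}_\varepsilon(\omega_\tau)$ and uses that $J_\lambda'$ is a genuine single-valued inverse of the strictly increasing $f_\lambda$, whereas $J$ need not be differentiable, takes the value $+\infty$ on part of the reachable range of $\varepsilon^2\omega_\tau$ (again fatal in the patch case, where the one-sided derivative is $+\infty$), and $\partial J$ is multivalued. Moreover, your final fix --- redefining $f$ on its countable jump set --- does not deliver the asserted identity $\omega_\varepsilon=\varepsilon^{-2}f(\psi_\varepsilon)$ with the \emph{given} $f$ unless you also know that $\psi_\varepsilon^{-1}(\{\text{jumps of }f\})$ has Lebesgue measure zero. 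The paper proves exactly this: $\psi_\varepsilon$ is strictly decreasing in $|\theta|$ on $S$ (a consequence of the angular Steiner symmetry of $\omega_\varepsilon$ and the strict monotonicity of the kernel $V_s$ in $\tau$), so every level set of $\psi_\varepsilon$ in $S$ is null, and one may then pass to the limit pointwise a.e.\ in the identity $\omega_{\varepsilon,\lambda}=\varepsilon^{-2}f(\psi_{\varepsilon,\lambda})+\lambda\varepsilon^{-2}(\psi_{\varepsilon,\lambda})_+^s$ using the uniform convergence $\psi_{\varepsilon,\lambda}\to\psi_\varepsilon$. You should replace your EL step by this limiting argument (which also identifies $\alpha_\varepsilon,\mu_\varepsilon$ as the limits of the penalized multipliers without invoking a separate variational computation).
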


\begin{proof}
By Lemma \ref{lem3}, up to an omitted subsequence, there exist two numbers $\alpha_{\varepsilon}$ and $\mu_{\varepsilon}$ such that
\begin{equation}\label{2-8}
  \alpha_{\varepsilon,\lambda}\to \alpha_{\varepsilon}\ \ \ \text{and}\ \ \ \mu_{\varepsilon,\lambda}\to\mu_{\varepsilon},
\end{equation}
when $\lambda \to 0$. Moreover, there exists a function $\omega_{\varepsilon}\in \mathcal{A}$ such that
\begin{equation}\label{2-9}
  \omega_{\varepsilon,\lambda}\to \omega_{\varepsilon}\ \ \ \text{weakly-star in}\ L^\infty(S)
\end{equation}
when $\lambda \to 0$. It is easy to see that $\omega_{\varepsilon}$ is also Steiner symmetric with respect to $\theta$. By the standard regularity theory, for every $1<p<\infty$, there exists a positive number $C_{s,p}$, depending only on $s$ and $p$, such that
  \begin{equation*}
    \|\mathcal{K}_s\omega_{\varepsilon,\lambda}\|_{\dot{W}^{2s,p}(\mathbb{R}^2)}\le C_{s,p}\|\omega_{\varepsilon,\lambda}\|_{L^p(S)},
  \end{equation*}
Combining this with \eqref{2-6}, we conclude that there exist positive numbers $C(R)$, not depending on $\lambda$, such that
\begin{equation*}
  \|\psi_{\varepsilon,\lambda}\|_{{W}^{2s,p}(B_R(0))}\le C(R),
\end{equation*}
for any $1<p<\infty$ and any positive number $R$. Thus, there exists a continuous function $\psi_\varepsilon:\mathbb{R}^2\to\mathbb{R}$ such that, up to a further subsequence,
\begin{equation*}
  \psi_{\varepsilon,\lambda}\to \psi_\varepsilon\ \ \ \text{in}\ L^\infty(B_R(0)),
\end{equation*}
as $\lambda \to 0$, for any positive number $R$. Clearly, we have
\begin{equation*}
  \psi_\varepsilon(x)=\mathcal{K}_s\omega_{\varepsilon}(x)+\frac{\alpha_{\varepsilon}}{2}|x|^2-\mu_{\varepsilon}.
\end{equation*}
Moreover, by virtue of the $\theta$-symmetrization of $\omega_\varepsilon$, we see that $\psi_\varepsilon$ is strictly symmetric decreasing with respect to $\theta$ in $S$. It follows that every level set of $\psi_\varepsilon$ in $S$ has measure zero. In view of \eqref{2-5}, it holds
\begin{equation}\label{2-10}
  \omega_{\varepsilon,\lambda}(x)=\frac{1}{\varepsilon^2} f_\lambda(\psi_{\varepsilon,\lambda}(x))=\frac{1}{\varepsilon^2} f(\psi_{\varepsilon,\lambda}(x))+\frac{\lambda}{\varepsilon^2}(\psi_{\varepsilon,\lambda})_+^s,\ \ \ \ \text{a.e.}\ x\in S.
\end{equation}
Since $f$ is a monotonic function, it has at most countable discontinuities. Letting $\lambda\to 0$ in \eqref{2-10}, we clearly have
\begin{equation*}
  \omega_{\varepsilon}(x)=\frac{1}{\varepsilon^2} f(\psi_{\varepsilon}(x)),\ \ \text{a.e.}\ x\in S.
\end{equation*}
It remains to be proved that $\omega_\varepsilon$ is a maximizer of $\mathcal{E}_{\varepsilon}$ relative to $\mathcal{A}$. For any $\omega\in \mathcal{A}$ such that $\mathcal{E}_{\varepsilon}(\omega)\not=-\infty$, we have
\begin{equation*}
  \mathcal{E}_{\varepsilon}(\omega)\le  \mathcal{E}_{\varepsilon,\lambda}(\omega)\le \mathcal{E}_{\varepsilon,\lambda}(\omega_{\varepsilon,\lambda}),
\end{equation*}
since $\mathcal{J}_{\varepsilon, \lambda}(\omega)\ge \mathcal{J}_{\varepsilon}(\omega)$. So it suffices to show that
\begin{equation*}
  \lim_{\lambda\to 0}\mathcal{E}_{\varepsilon,\lambda}(\omega_{\varepsilon,\lambda})=\mathcal{E}_{\varepsilon}(\omega_\varepsilon).
\end{equation*}
Since $K_s(\cdot,\cdot)\in L^{1+s}(S\times S)$, we have
\begin{equation}\label{2-11}
  \lim_{\lambda\to+\infty}E_s(\omega_{\varepsilon,\lambda})=E_s(\omega_{\varepsilon}).
\end{equation}
On the other hand, by convexity we have (see \cite{Roc})
\begin{equation*}
  J_\lambda(\varepsilon^2\omega_{\varepsilon,\lambda}(x))=\varepsilon^2\omega_{\varepsilon,\lambda}(x)\psi_{\varepsilon,\lambda}(x)-F_\lambda(\psi_{\varepsilon,\lambda}(x)), \ \ \ x\in S.
\end{equation*}
Hence
\begin{equation*}
  \mathcal{J}_{\varepsilon, \lambda}(\omega_{\varepsilon,\lambda})=\varepsilon^2\int_S\omega_{\varepsilon,\lambda}(x)\psi_{\varepsilon,\lambda}(x)dx-\int_SF_\lambda(\psi_{\varepsilon,\lambda}(x))dx,
\end{equation*}
from which it follows that
\begin{equation}\label{2-12}
  \lim_{\lambda\to 0}\mathcal{J}_{\varepsilon, \lambda}(\omega_{\varepsilon,\lambda})=\varepsilon^2\int_S\omega_{\varepsilon}(x)\psi_{\varepsilon}(x)dx-\int_SF(\psi_{\varepsilon}(x))dx=\mathcal{J}_{\varepsilon}(\omega_{\varepsilon}).
\end{equation}
Combining \eqref{2-11} and \eqref{2-12}, we get
\begin{equation*}
  \lim_{\lambda\to 0}\mathcal{E}_{\varepsilon,\lambda}(\omega_{\varepsilon,\lambda})=\mathcal{E}_{\varepsilon}(\omega_\varepsilon).
\end{equation*}
The proof is thus complete.
\end{proof}

\subsection{Description of $\text{supp}(\omega_\varepsilon)$ when $\varepsilon$ is sufficiently small}\label{s4}
 We note that $\omega_\varepsilon$ obtained in Lemma \ref{lem4} is not yet sufficient to provide a dynamically possible steady vortex flow. To get a desired solution, we need to prove that the support of $\omega_\varepsilon$ is away from the boundary of $S$ (see Lemma \ref{lem15} below). We will show that this is true  when $\varepsilon$ is sufficiently small. It is based on the observation that in order to maximize energy, the support of a maximizer can not be too scattered. We will reach this goal by several steps. For convenience, we will use $C$ below to denote various positive constants not depending on $\varepsilon$ that may change from line to line.
 We begin by giving a lower bound of $\mathcal{E}_\varepsilon(\omega_\varepsilon)$. Let
\begin{equation*}
  A_s=c_s\pi^{-1-s}\int_{B_1(0)}\int_{B_1(0)}\frac{1 }{|x-x'|^{2-2s}}dxdx'.
\end{equation*}

\begin{lemma}\label{lem5}
For every $\delta\in (0,1)$, there exists a positive number $C_\delta$, depending only on $\delta$, such that
\begin{equation*}
  \mathcal{E}_\varepsilon(\omega_\varepsilon)\ge \frac{\delta^{1-s}A_s}{2\varepsilon^{2-2s}}-C_\delta.
\end{equation*}
\end{lemma}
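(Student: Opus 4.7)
Since $\omega_\varepsilon$ is by Lemma \ref{lem4} a maximizer of $\mathcal{E}_\varepsilon$ over $\mathcal{A}$, it suffices to exhibit one element $\omega^\delta\in\mathcal{A}$ for which $\mathcal{E}_\varepsilon(\omega^\delta)$ already meets the desired lower bound. The natural choice is a radially symmetric patch concentrating near $(1,0)$, with height and radius balanced so that the $L^1$ and angular momentum constraints are satisfied while the leading singular part of the double integral has the claimed scale. Specifically, set
\begin{equation*}
r_\delta:=\frac{\varepsilon}{\sqrt{\delta\pi}},\qquad a_\varepsilon:=\sqrt{1-\varepsilon^2/(2\pi)},\qquad \omega^\delta:=\frac{\delta}{\varepsilon^2}\,\mathbf{1}_{B_{r_\delta}(a_\varepsilon,0)}.
\end{equation*}
A direct computation in polar coordinates gives $M(\omega^\delta)=1$ and $L(\omega^\delta)=a_\varepsilon^2+\delta r_\delta^2/2=1$. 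For $\varepsilon$ sufficiently small compared with $\sqrt{\delta}$ (concretely $\varepsilon\le c_N\sqrt{\delta}$ for a geometric constant depending only on $N$), the ball $B_{r_\delta}(a_\varepsilon,0)$ lies inside $S$, whence $\omega^\delta\in\mathcal{A}$.

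\textbf{Energy estimate.} I split $K_s(x,x')=G_s(x-x')+\sum_{k=1}^{N-1}G_s(x-Q_{2k\pi/N}x')$. For the first, singular, term the substitution $x=(a_\varepsilon,0)+r_\delta u$, $x'=(a_\varepsilon,0)+r_\delta v$ yields
\begin{equation*}
\frac{1}{2}\iint G_s(x-x')\omega^\delta(x)\omega^\delta(x')\,dx\,dx'=\frac{c_s\delta^{2}}{2\varepsilon^{4}}\,r_\delta^{2+2s}\iint_{B_1(0)\times B_1(0)}\frac{du\,dv}{|u-v|^{2-2s}}=\frac{\delta^{1-s}A_s}{2\varepsilon^{2-2s}},
\end{equation*}
using $r_\delta^{2+2s}=\varepsilon^{2+2s}(\delta\pi)^{-1-s}$ and the definition of $A_s$. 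For each $k\in\{1,\dots,N-1\}$ and each pair $x,x'$ in the support of $\omega^\delta$, the distance $|x-Q_{2k\pi/N}x'|$ is bounded below by $2\sin(\pi/N)-O(r_\delta+|a_\varepsilon-1|)\ge \sin(\pi/N)$ once $\varepsilon$ is small enough. Hence each kernel $G_s(x-Q_{2k\pi/N}x')$ is bounded by a constant $C_{N,s}$ on $\mathrm{supp}(\omega^\delta)\times\mathrm{supp}(\omega^\delta)$, and the total non-singular contribution to the double integral is at most $C_{N,s}\,M(\omega^\delta)^2=C_{N,s}$.

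\textbf{Penalization term.} Since $\varepsilon^2\omega^\delta\equiv\delta$ on its support, which has measure $\pi r_\delta^2=\varepsilon^2/\delta$,
\begin{equation*}
\frac{1}{\varepsilon^{2}}\int J(\varepsilon^{2}\omega^\delta)\,dx=\frac{J(\delta)}{\delta}.
\end{equation*}
Convexity of $J$ together with $J(0)=0$ guarantees that $\delta\mapsto J(\delta)/\delta$ is nondecreasing on $(0,\infty)$ and finite at every $\delta\in(0,1)$, so this is a bound depending only on $\delta$. Collecting the three pieces,
\begin{equation*}
\mathcal{E}_\varepsilon(\omega^\delta)\ge \frac{\delta^{1-s}A_s}{2\varepsilon^{2-2s}}-C_{N,s}-\frac{J(\delta)}{\delta}=:\frac{\delta^{1-s}A_s}{2\varepsilon^{2-2s}}-C_\delta,
\end{equation*}
and the maximality of $\omega_\varepsilon$ gives the claim in the regime $\varepsilon\le c_N\sqrt{\delta}$. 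For the compact remaining range $\varepsilon\in(c_N\sqrt{\delta},\varepsilon_0]$ the right-hand side is already bounded (since $\delta^{1-s}/\varepsilon^{2-2s}\le c_N^{2s-2}$ there), so enlarging $C_\delta$ absorbs those finitely bad values.

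\textbf{Main obstacle.} Essentially bookkeeping: the only nontrivial point is to choose the patch with the correct balance between the height $\delta/\varepsilon^2$ and the radius $r_\delta=\varepsilon/\sqrt{\delta\pi}$ so as to keep $\omega^\delta$ admissible and, simultaneously, extract the scaling $\varepsilon^{2s-2}\delta^{1-s}$ in the singular double integral. Everything else reduces to the standard separation of the kernel $K_s$ into its diagonal singular piece and the smooth off-diagonal contributions, together with the elementary convexity bound $J(\delta)/\delta<\infty$ for $\delta<1$.
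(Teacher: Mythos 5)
Your approach is exactly the paper's: test the maximality of $\omega_\varepsilon$ against the rescaled patch $\frac{\delta}{\varepsilon^2}\mathbf{1}_{B_{\varepsilon/\sqrt{\pi\delta}}}$ centered on the $x_1$-axis, isolate the $k=0$ (singular) piece of $K_s$ to produce the leading term $\delta^{1-s}A_s/(2\varepsilon^{2-2s})$, observe that the off-diagonal pieces are harmless, and bound the penalization by $J(\delta)/\delta<\infty$ using that $\delta<1=\sup f$ lies in the effective domain of $J$. The energy and penalization computations are correct.

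There is one arithmetic slip in the verification of admissibility: the second moment of your patch is $L(\omega^\delta)=a^2+r_\delta^2/2=a^2+\varepsilon^2/(2\delta\pi)$ (the spread contributes $r_\delta^2/2$ per unit mass, not $\delta r_\delta^2/2$), so with your center $a_\varepsilon=\sqrt{1-\varepsilon^2/(2\pi)}$ you get $L(\omega^\delta)=1+\frac{\varepsilon^2}{2\pi}\bigl(\frac{1}{\delta}-1\bigr)\neq 1$. The fix is simply to recenter at $a^\delta_\varepsilon=\sqrt{1-\varepsilon^2/(2\delta\pi)}$, which is precisely the paper's choice; nothing downstream changes. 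A secondary, more cosmetic point: in the regime $\varepsilon\in(c_N\sqrt{\delta},\varepsilon_0]$ you absorb the estimate into $C_\delta$ by noting the right-hand side is bounded above there, but to conclude you also need $\mathcal{E}_\varepsilon(\omega_\varepsilon)$ bounded \emph{below} on that range; this follows by testing against a single fixed admissible patch $\varpi^{\delta''}_\varepsilon$ with $\delta''$ depending only on $\varepsilon_0$ and $N$, so the gap is easily closed (the paper does not address this range explicitly either).
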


\begin{proof}
  The key idea is to choose a suitable test function. Let $\delta\in (0,1)$. Set
\begin{equation*}
  \varpi^\delta_\varepsilon=\frac{\delta}{\varepsilon^2}\textbf{1}_{B_{\varepsilon/\sqrt{\pi\delta}}(a^\delta_\varepsilon, 0)},\ \ \text{with}\ \ a^\delta_\varepsilon=\sqrt{1-{\varepsilon^2}/{2\delta\pi}}.
\end{equation*}
We check that $\varpi^\delta_\varepsilon\in \mathcal{A}$. A direct calculation then yields
\begin{equation}\label{2-13}
  E_s(\varpi^\delta_\varepsilon)\ge \frac{\delta^{1-s}A_s}{2\varepsilon^{2-2s}}-C
\end{equation}
for some positive number $C$ independent of $\varepsilon$ and $\delta$. Notice that the effect domain of $J(\cdot)$ is contained in $(-\infty, \sup_\mathbb{R} f]$. Thus there exists a positive number $C_\delta$, depending only on $\delta$, such that
\begin{equation}\label{2-14}
  \mathcal{J}_\varepsilon(\varpi^\delta_\varepsilon)\le C_\delta.
\end{equation}
Combining \eqref{2-13} and \eqref{2-14}, we obtain
\begin{equation*}
   \mathcal{E}_\varepsilon(\omega_\varepsilon)\ge \mathcal{E}_\varepsilon(\varpi^\delta_\varepsilon)\ge \frac{\delta^{1-s}}{2}\frac{A_s}{\varepsilon^{2-2s}}-C_\delta,
\end{equation*}
which completes the proof.
\end{proof}

Let
\begin{equation}\label{zzz0}
  \zeta_\varepsilon(x)=\varepsilon^2\omega_\varepsilon(\varepsilon x),\ \ \ x\in \mathbb{R}^2.
\end{equation}
Then
\begin{equation}\label{zzz}
  0\le \zeta_\varepsilon\le 1, \ \ \ \text{supp}(\zeta_\varepsilon)\subset B_{2/\varepsilon}(0)\ \ \   \text{and}\ \ \ \int_{\mathbb{R}^2}\zeta_\varepsilon dx=1.
\end{equation}
Observe that
\begin{equation*}
  E_s(\omega_\varepsilon)=\frac{c_s}{2\varepsilon^{2-2s}}\int_{\mathbb{R}^2}\int_{\mathbb{R}^2}\frac{\zeta_\varepsilon(x)\zeta_\varepsilon(x')}{|x-x|^{2-2s}}dxdx'+O(1).
\end{equation*}
Let
\begin{equation}\label{zzz2}
  \mathcal{I}_s(\zeta)=c_s\int_{\mathbb{R}^2}\int_{\mathbb{R}^2}\frac{\zeta(x)\zeta(x')}{|x-x'|^{2-2s}}dxdx'.
\end{equation}

From Lemma \ref{lem5}, we can obtain the following result.
\begin{lemma}\label{lem6}
For every $\delta\in (0,1)$, there exists a positive number $C_\delta$, depending only on $\delta$, such that
\begin{equation}\label{2-15}
  \mathcal{I}_s(\zeta_\varepsilon)\ge \delta^{1-s}A_s-C_\delta\varepsilon^{2-2s}.
\end{equation}
\end{lemma}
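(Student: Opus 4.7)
The plan is to deduce Lemma~\ref{lem6} directly from Lemma~\ref{lem5} by (i) discarding the nonnegative penalty part of $\mathcal{E}_\varepsilon$, and (ii) rewriting the quadratic part in rescaled variables.

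First I would observe that $\mathcal{J}_\varepsilon(\omega)\ge 0$ for every admissible $\omega$. Indeed, since $F(0)=0$, the definition $J(s)=\sup_{t\in\mathbb{R}}[st-F(t)]$ evaluated at $t=0$ gives $J(s)\ge 0$ on its effective domain, hence the pointwise nonnegativity of $J(\varepsilon^2\omega_\varepsilon)$. Therefore, from
\begin{equation*}
\mathcal{E}_\varepsilon(\omega_\varepsilon)=\tfrac{1}{2}\int_{\mathcal{U}_N}\int_{\mathcal{U}_N}K_s(x,x')\omega_\varepsilon(x)\omega_\varepsilon(x')\,dx\,dx'-\mathcal{J}_\varepsilon(\omega_\varepsilon),
\end{equation*}
and Lemma~\ref{lem5}, one obtains
\begin{equation*}
\tfrac{1}{2}\int_{\mathcal{U}_N}\int_{\mathcal{U}_N}K_s(x,x')\omega_\varepsilon(x)\omega_\varepsilon(x')\,dx\,dx'\;\ge\;\frac{\delta^{1-s}A_s}{2\varepsilon^{2-2s}}-C_\delta.
\end{equation*}

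Next I would convert the left-hand side into $\mathcal{I}_s(\zeta_\varepsilon)/(2\varepsilon^{2-2s})$ plus a lower-order remainder, exactly as already recorded in the excerpt just before the statement of Lemma~\ref{lem6}. Concretely, $\mathrm{supp}(\omega_\varepsilon)\subset S$ and the $k\neq 0$ rotated copies $Q_{2k\pi/N}S$ stay at a fixed positive distance from $S$, so those terms of $K_s$ yield a contribution bounded by $C\|\omega_\varepsilon\|_{L^1}^2=C$. The remaining $k=0$ piece $c_s/|x-x'|^{2-2s}$ is handled by the change of variable $x=\varepsilon y$, $x'=\varepsilon y'$, which produces the factor $\varepsilon^{2s-2}$ and the integrand $\zeta_\varepsilon(y)\zeta_\varepsilon(y')/|y-y'|^{2-2s}$, giving
\begin{equation*}
\tfrac{1}{2}\int\!\!\int K_s(x,x')\omega_\varepsilon\omega_\varepsilon\,dx\,dx'\;=\;\frac{1}{2\varepsilon^{2-2s}}\mathcal{I}_s(\zeta_\varepsilon)+O(1).
\end{equation*}

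Combining the two displays and multiplying through by $2\varepsilon^{2-2s}$ yields
\begin{equation*}
\mathcal{I}_s(\zeta_\varepsilon)\;\ge\;\delta^{1-s}A_s-2(C_\delta+C)\varepsilon^{2-2s},
\end{equation*}
which is \eqref{2-15} after relabelling the constant. I do not anticipate a genuine obstacle here: the only two things one must not overlook are the sign of the penalty (which rests on the elementary fact $F(0)=0\Rightarrow J\ge 0$) and the harmless $O(1)$ coming from the non-trivial rotations in $K_s$, which is swallowed by the $\varepsilon^{2-2s}$ error term because $\varepsilon$ stays bounded.
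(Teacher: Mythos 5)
Your proposal is correct and is essentially the argument the paper intends: the paper gives no written proof of Lemma \ref{lem6}, stating only that it follows from Lemma \ref{lem5} together with the displayed identity $E_s(\omega_\varepsilon)=\mathcal{I}_s(\zeta_\varepsilon)/(2\varepsilon^{2-2s})+O(1)$, and your two ingredients (nonnegativity of $\mathcal{J}_\varepsilon$ from $F(0)=0$, and the rescaling plus the $O(1)$ contribution of the rotated kernels) are exactly what is needed to fill in that deduction.
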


The following lemma shows that the support of maximizer $\omega_\varepsilon$ can not be too scattered.
\begin{lemma}\label{lem7}
  For arbitrary $\eta\in (0,1)$, there exists a positive number $R$, such that
  \begin{equation*}
     \sup_{y\in \mathbb{R}^2}\int_{B_R(y)}\zeta_\varepsilon dx> 1-\eta,\ \ \ \forall\,\varepsilon>0.
  \end{equation*}
\end{lemma}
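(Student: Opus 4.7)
The plan is to argue by contradiction via Lions's concentration--compactness principle. Suppose the conclusion fails: there exist $\eta_0\in(0,1)$ and sequences $\varepsilon_n>0$, $R_n\to\infty$ with
\[
\sup_{y\in\mathbb{R}^2}\int_{B_{R_n}(y)}\zeta_{\varepsilon_n}\,dx\le 1-\eta_0.
\]
I first observe that $\varepsilon_n\to 0$: otherwise, \eqref{zzz} confines $\text{supp}(\zeta_{\varepsilon_n})$ to a fixed ball and the mass-one condition forces the above sup to equal $1$ for $R_n$ large. Lemma \ref{lem6} applied with $\delta\uparrow 1$ then provides the energy lower bound $\liminf_n\mathcal{I}_s(\zeta_{\varepsilon_n})\ge A_s$.

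Set $\zeta_n:=\zeta_{\varepsilon_n}$. Since $\{\zeta_n\}$ is a nonnegative unit-mass sequence in $L^1(\mathbb{R}^2)$, Lions's lemma reduces, up to a subsequence, to exactly one of compactness, vanishing, or dichotomy. Compactness would furnish $R^*>0$ and points $y_n^*$ with $\int_{B_{R^*}(y_n^*)}\zeta_n\ge 1-\eta_0/2$, hence $\sup_y\int_{B_{R_n}(y)}\zeta_n\ge 1-\eta_0/2$ once $R_n\ge R^*$, contradicting the bound. For vanishing---meaning $\sup_y\int_{B_R(y)}\zeta_n\to 0$ for every fixed $R$---I would split the Riesz potential $V_n(x):=\int\zeta_n(y)|x-y|^{2s-2}\,dy$ at radius $R$ and bound the near part by the bathtub principle together with $\zeta_n\le 1$ to obtain
\[
V_n(x)\le \frac{\pi^{1-s}}{s}\Bigl(\sup_y\int_{B_R(y)}\zeta_n\Bigr)^s+R^{2s-2}.
\]
Integrating against $\zeta_n$ and sending $n\to\infty$ followed by $R\to\infty$ would then force $\mathcal{I}_s(\zeta_n)\to 0$, contradicting the lower bound.

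The decisive case is dichotomy: one has $\zeta_n=\zeta_n^1+\zeta_n^2+o_{L^1}(1)$ with $0\le\zeta_n^i\le\zeta_n$, $\int\zeta_n^i\to m_i\in(0,1)$, $m_1+m_2=1$, and $d_n:=\text{dist}(\text{supp}(\zeta_n^1),\text{supp}(\zeta_n^2))\to\infty$. The cross-interaction is $O(d_n^{2s-2})$, so $\mathcal{I}_s(\zeta_n)=\mathcal{I}_s(\zeta_n^1)+\mathcal{I}_s(\zeta_n^2)+o(1)$. The main obstacle is the sharp inequality
\[
\mathcal{I}_s(\zeta)\le A_s\Bigl(\int\zeta\Bigr)^{1+s}\qquad\text{whenever}\ 0\le\zeta\le 1,
\]
which by scaling reduces to $\mathcal{I}_s(\zeta)\le A_s$ for $\int\zeta=1$. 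I would establish this via Riesz rearrangement (reducing to radial decreasing $\zeta^*$) and the layer-cake representation $\zeta^*=\int_0^1\mathbf{1}_{B_{r(t)}}\,dt$ with $\int_0^1 r(t)^2\,dt=1/\pi$; Cauchy--Schwarz for the positive-definite Riesz bilinear form controls $\mathcal{I}_s(\zeta^*)$ by $A_s\pi^{1+s}\bigl(\int_0^1 r(t)^{1+s}\,dt\bigr)^2$, and Jensen's inequality applied to the concave map $x\mapsto x^{(1+s)/2}$ on the probability measure $dt$ pins this down to $A_s$. Applying the sharp bound piecewise then yields $\limsup_n\mathcal{I}_s(\zeta_n)\le A_s(m_1^{1+s}+m_2^{1+s})<A_s$, with the strict inequality following from $m_i^{1+s}<m_i$ for $m_i\in(0,1)$. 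This again contradicts $\liminf_n\mathcal{I}_s(\zeta_n)\ge A_s$ and closes the argument.
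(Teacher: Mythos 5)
Your proposal follows the same skeleton as the paper's proof: argue by contradiction, apply Lions's concentration--compactness lemma to $\zeta_{\varepsilon_n}$, rule out vanishing by showing $\mathcal{I}_s(\zeta_{\varepsilon_n})\to 0$ would contradict the lower bound of Lemma \ref{lem6}, and rule out dichotomy by strict superadditivity of the sharp Riesz-energy bound. The one genuinely different ingredient is your derivation of the key inequality $\mathcal{I}_s(\zeta)\le A_s(\int\zeta)^{1+s}$ for $0\le\zeta\le1$: the paper obtains it in one stroke from the ``bathtub principle for Riesz integrals'' (Lemma \ref{bath}, i.e.\ indicator functions of balls maximize the two-body interaction under the $L^\infty$ and mass constraints), whereas you rederive it via Riesz rearrangement, the layer-cake decomposition $\zeta^*=\int_0^1\mathbf{1}_{B_{r(t)}}\,dt$, Cauchy--Schwarz for the positive-definite Riesz form, and Jensen applied to $x\mapsto x^{(1+s)/2}$. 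Your route is valid (the kernel $|x|^{2s-2}$ is of positive type for $0<s<1$, so Cauchy--Schwarz applies, and the scaling $B(\mathbf{1}_{B_r},\mathbf{1}_{B_r})=r^{2+2s}B(\mathbf{1}_{B_1},\mathbf{1}_{B_1})$ pins down the constant $A_s$ exactly); it is more self-contained but longer than simply citing the bathtub plus Riesz rearrangement. Two further small differences work in your favor: you justify $\varepsilon_n\to0$ from the support constraint \eqref{zzz} (the paper asserts it without comment), which lets you send $\delta\uparrow1$ in Lemma \ref{lem6} and conclude the dichotomy case cleanly from $m_1^{1+s}+m_2^{1+s}<1$, avoiding the paper's auxiliary parameter $\delta_0$. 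The only point you gloss over is the energy contribution of the $L^1$-small remainder $\zeta_n-\zeta_n^1-\zeta_n^2$ in the dichotomy case (the term $I_4$ in the paper); it is controlled by exactly the near/far splitting you already use in the vanishing step, so this is a routine omission rather than a gap.
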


To prove Lemma \ref{lem7}, we need two auxiliary lemmas. The first is the concentration compactness lemma, which is due to
Lions \cite{Lions}.

\begin{lemma}\label{lem8}
	Let $\{\xi_n\}_{n=1}^\infty$ be a sequence of nonnegative functions in $L^1(\Pi)$ satisfying
	$$\limsup_{n\rightarrow \infty} \int_{\mathbb{R}^2} \xi_n dx\rightarrow \beta,$$ for some $0< \beta<\infty$.
	Then, after passing to a subsequence, one of the following holds:\\
\begin{itemize}
  \item [(i)](Compactness) There exists a sequence $\{y_n\}_{n=1}^\infty$ in $\mathbb{R}^2$ such that for arbitrary $\epsilon>0$, there exists $R>0$ satisfying
	\begin{equation*}
		\int_{B_R(y_n)}\xi_n dx\geq \beta-\epsilon, \quad \forall\, n\geq 1.
	\end{equation*}\\
  \item [(ii)](Vanishing) For each $R>0$,
	\begin{equation*}
		\lim_{n\rightarrow \infty}\sup_{y\in \mathbb{R}^2}  \int_{B_R(y)} \xi_n dx =0.
	\end{equation*} \\
  \item [(iii)](Dichotomy) There exists a number $0<\beta_1<\beta$ such that for any $\epsilon>0$, there exist $N=N(\epsilon)\geq 1$ and $0\leq \xi_{i,n}\leq \xi_n, \,i=1,2$ satisfying
	\begin{equation*}
		\begin{cases}
			\|\xi_n-\xi_{1,n}-\xi_{2,n}\|_1+|\beta-\int_{\mathbb{R}^2} \xi_{1,n} dx|+|\beta-\beta_1-\int_{\mathbb{R}^2} \xi_{2,n} dx|<\epsilon,\quad \text{for}\ n\geq N,\\
			d_n:=\text{dist}(\text{supp}(\xi_{1,n}), \text{supp}(\xi_{2,n}))\rightarrow \infty, \quad \text{as}\ n\rightarrow \infty.
		\end{cases}	
	\end{equation*}
\end{itemize}
\end{lemma}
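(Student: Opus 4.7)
The plan is to follow the classical argument by introducing the Lévy concentration function
$$Q_n(R) := \sup_{y\in\mathbb{R}^2}\int_{B_R(y)}\xi_n\,dx,$$
which for each $n$ is nondecreasing in $R$ and uniformly bounded by $\|\xi_n\|_{L^1}\to\beta$. First I would extract, by a Helly/diagonal argument on rationals $R>0$, a subsequence along which $Q_n(R)\to Q(R)$ pointwise on a dense set, producing a nondecreasing limit function $Q:(0,\infty)\to[0,\beta]$. Setting $\beta_1 := \lim_{R\to\infty} Q(R)\in[0,\beta]$, the trichotomy will correspond to the three cases $\beta_1=0$, $\beta_1=\beta$, and $0<\beta_1<\beta$.

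If $\beta_1 = 0$, then for every fixed $R>0$ the monotonicity gives $\sup_y\int_{B_R(y)}\xi_n\,dx = Q_n(R)\to Q(R)\le\beta_1 = 0$, which is exactly the vanishing conclusion (ii). For the compactness case $\beta_1=\beta$, I would fix a decreasing sequence $\epsilon_k\downarrow 0$, pick $R_k$ with $Q(R_k)>\beta-\epsilon_k/2$, and choose centers $y_n^{(k)}$ with $\int_{B_{R_k}(y_n^{(k)})}\xi_n\,dx>\beta-\epsilon_k$ for $n\ge N_k$. The key geometric observation is that if two such centers $y_n^{(k)},y_n^{(k')}$ were farther apart than $R_k+R_{k'}$, the two balls would be disjoint and the total mass would exceed $\beta+o(1)$, a contradiction. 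Hence the $y_n^{(k)}$ cluster within bounded distance of $y_n := y_n^{(1)}$, and after enlarging radii one gets a single sequence $y_n$ satisfying conclusion (i).

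The main obstacle is the dichotomy case $0<\beta_1<\beta$. Given $\epsilon>0$, I would choose $R_1$ large enough that $Q(R_1)>\beta_1-\epsilon/4$, then select centers $y_n$ with $\int_{B_{R_1}(y_n)}\xi_n\,dx>\beta_1-\epsilon/2$. Since $Q(R)\nearrow\beta_1$, one can pick a second radius $R_2=R_2(n)\to\infty$ growing slowly enough (along the diagonal subsequence) that both $Q_n(R_2(n))\le\beta_1+\epsilon/4$ and $R_2(n)-R_1\to\infty$ hold simultaneously. Then the mass of $\xi_n$ in the annulus $B_{R_2(n)}(y_n)\setminus B_{R_1}(y_n)$ is less than $\epsilon$. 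Defining
$$\xi_{1,n} := \xi_n\,\mathbf{1}_{B_{R_1}(y_n)},\qquad \xi_{2,n} := \xi_n\,\mathbf{1}_{\mathbb{R}^2\setminus B_{R_2(n)}(y_n)},$$
yields the desired decomposition, with $\mathrm{dist}(\mathrm{supp}(\xi_{1,n}),\mathrm{supp}(\xi_{2,n}))\ge R_2(n)-R_1\to\infty$, and with all three quantitative bounds in (iii) verified from the annular mass estimate.

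The hardest part is the bookkeeping in the dichotomy step: one must extract the diagonal subsequence so that, for the chosen $\epsilon$, the outer radius $R_2(n)$ can be taken to infinity while still controlling the annular mass uniformly in $n$. The geometric contradiction argument in the compactness case (disjoint balls give too much mass) is the recurring engine that makes both (i) and (iii) work; once that is in place, the rest reduces to standard measure-theoretic cutoff manipulations and the pointwise convergence of $Q_n$.
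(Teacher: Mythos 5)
Your proposal is the standard concentration-function proof of Lions' lemma and it is correct: the paper gives no proof of Lemma \ref{lem8} at all, citing \cite{Lions} instead, and your argument (Helly extraction of the limit $Q$ of the concentration functions $Q_n(R)=\sup_y\int_{B_R(y)}\xi_n\,dx$, trichotomy on $\beta_1=\lim_{R\to\infty}Q(R)$, disjoint-balls mass contradiction for compactness, and the two-radius annular cutoff for dichotomy) is precisely the one in that reference. One remark: your construction produces $\int_{\mathbb{R}^2}\xi_{1,n}\,dx\to\beta_1$ rather than $\beta$, which is the correct and intended conclusion --- the term $|\beta-\int_{\mathbb{R}^2}\xi_{1,n}\,dx|$ in alternative (iii) as printed is a typo for $|\beta_1-\int_{\mathbb{R}^2}\xi_{1,n}\,dx|$ (otherwise the three conditions are mutually inconsistent with $0\le\xi_{1,n}+\xi_{2,n}\le\xi_n$), and the corrected form is exactly what is used in Step 2 of the proof of Lemma \ref{lem7}.
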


In addition, we need the following result.
\begin{lemma}[Bathtub principle for Riesz integrals]\label{bath}
Let $s\in (0,1)$. Let $\beta_1$ and $\beta_2$ be two positive numbers. Given a positive number $\eta$, set
\begin{equation*}
  \mathcal{B}_\eta:=\big{\{}\xi \in L^\infty(\mathbb{R}^2,[0,1]): \int_{\mathbb{R}^2}\xi dx\le \eta \big{\}}.
  \end{equation*}
  Then the maximization problem
  \begin{equation*}
    \mathcal{M}_{\beta_1,\beta_2}:=\sup_{(\xi_1,\xi_2)\in \mathcal{B}_{\eta_1}\times \mathcal{B}_{\eta_2}}\int_{\mathbb{R}^2}\int_{\mathbb{R}^2} \frac{1}{|x-y|^{2-2s}}\xi_1(x)\xi_2(y) dxdy
  \end{equation*}
is solved by the functions
\begin{equation*}
  \xi_i(x)=\textbf{1}_{B_{R_i}(0)},\ \ \ R_i=\sqrt{\beta_i/\pi},\ \ i=1,2.
\end{equation*}
Moreover, this solution is unique up to a translation.
\end{lemma}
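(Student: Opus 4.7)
The plan is to combine Riesz's rearrangement inequality (to reduce to radially symmetric decreasing competitors) with the classical scalar bathtub principle applied in each argument separately.

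First, note that the kernel $K(z):=|z|^{-(2-2s)}$ is strictly symmetric decreasing about the origin. Let $\xi_i^{*}$ denote the symmetric decreasing rearrangement of $\xi_i$ about the origin. Since rearrangement preserves the essential supremum and the integral, $\xi_i^{*}\in \mathcal{B}_{\beta_i}$. Riesz's rearrangement inequality then gives
\begin{equation*}
\int_{\mathbb{R}^2}\!\!\int_{\mathbb{R}^2}\frac{\xi_1(x)\xi_2(y)}{|x-y|^{2-2s}}\,dxdy \;\le\; \int_{\mathbb{R}^2}\!\!\int_{\mathbb{R}^2}\frac{\xi_1^{*}(x)\xi_2^{*}(y)}{|x-y|^{2-2s}}\,dxdy,
\end{equation*}
so the supremum is attained only among pairs of radially symmetric decreasing functions. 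Existence of a maximizing pair would follow from a standard direct method: one takes a maximizing sequence, symmetrizes each element, extracts a pointwise subsequential limit using monotonicity of radial decreasing functions, and passes to the limit via HLS-type $L^p$ bounds (available since $\|\xi_i\|_p\le \beta_i^{1/p}$ for all $p\in[1,\infty]$).

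Next, given any radially symmetric decreasing $\xi_2\in \mathcal{B}_{\beta_2}$, define $h(x):=(K*\xi_2)(x)$. The Riesz convolution of two nonnegative symmetric decreasing functions is again symmetric decreasing (the one-function version of Riesz's inequality), and since $\xi_2\not\equiv 0$, $h$ is strictly positive and in fact strictly decreasing in $|x|$ on all of $\mathbb{R}^2$. The map $\xi_1\mapsto \int_{\mathbb{R}^2} \xi_1 h\,dx$ is linear, and the admissible set $\mathcal{B}_{\beta_1}$ is precisely the class to which the classical bathtub principle applies. It follows that the unique maximizer takes the form $\xi_1=\mathbf{1}_{\{h>t\}}$ for some threshold $t\ge 0$; since $h$ is radially strictly decreasing, this superlevel set is an open ball $B_{R_1}(0)$, and positivity of $h$ together with the linear gain from enlarging the support forces saturation $\int\xi_1=\beta_1$, i.e.\ $R_1=\sqrt{\beta_1/\pi}$. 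Reversing the roles of $\xi_1$ and $\xi_2$ gives $\xi_2=\mathbf{1}_{B_{R_2}(0)}$ with $R_2=\sqrt{\beta_2/\pi}$.

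The remaining, and main, delicacy is the uniqueness statement \emph{up to a common translation}: an arbitrary maximizing pair $(\xi_1,\xi_2)$ need not itself be radial. For this I would invoke the equality case of Riesz's rearrangement inequality in the strict symmetric decreasing setting (Lieb, \emph{Studies in Appl.\ Math.} 57, 1977; see also Lieb--Loss, \emph{Analysis}, Thm.\ 3.9): because $K$ is \emph{strictly} symmetric decreasing, equality in the Riesz inequality forces $\xi_1$ and $\xi_2$ to be translates of $\xi_1^{*}$ and $\xi_2^{*}$ by a \emph{common} vector $z_0\in\mathbb{R}^2$, giving $\xi_i=\mathbf{1}_{B_{R_i}(z_0)}$. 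One must verify the hypotheses of this equality-case theorem, namely that each $\xi_i^{*}$ has strictly positive but finite level-set measures; this is automatic from $0<\beta_i<\infty$. This strict-rearrangement step is where the real work lies, since the reduction to radial functions and the bathtub principle are otherwise routine.
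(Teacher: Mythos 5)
Your proposal is correct and follows essentially the same route as the paper, whose entire proof consists of citing the bathtub principle (Theorem 1.14 in Lieb--Loss) together with the Riesz rearrangement inequality and its strict equality case (Theorem 3.9 there); you have simply written out the details the paper leaves to the reader. The only cosmetic point is the statement's mismatch between $\eta_i$ and $\beta_i$, which you resolve in the obvious intended way by taking $\eta_i=\beta_i$.
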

\begin{proof}
  This is a simple consequence of the bath principle and the Riesz rearrangement inequality; see, e.g., Theorem 1.14 and Theorem 3.9 in \cite{Lieb}.
\end{proof}

Now, we can give the proof of Lemma \ref{lem7}.
\begin{proof}[Proof of Lemma \ref{lem7}]
  We argue by contradiction. If the statement was false, then there exists a number $0<\eta_0<1$ such that for any integer $n\ge1$, there exists an $\varepsilon_n>0$ satisfying $\varepsilon_n\to 0^+$ as $n\to \infty$, such that
  \begin{equation*}
    \sup_{y\in \mathbb{R}^2}\int_{B_n(y)}\zeta_{\varepsilon_n} dx\le1-\eta_0, \ \ \ \forall\,n\ge1.
  \end{equation*}
  By virtue of Lemma \ref{lem8} with $\xi_n=\zeta_{\varepsilon_n}$, we conclude that for a certain subsequence, still denoted by $\{\zeta_{\varepsilon_n}\}_{n=1}^\infty$, one of the three cases in Lemma \ref{lem8} should occur. If $\{\zeta_{\varepsilon_n}\}_{n=1}^\infty$ had the Compactness Property, then it would contradict the hypothesis and the proof is thus finished. So it suffices to exclude the Vanishing Property and the Dichotomy Property.

  \emph{Step 1. Vanishing excluded:}
Suppose for each fixed $R>0$,
\begin{equation}\label{2-16}
	\lim_{n\rightarrow \infty}\sup_{y\in \mathbb{R}^2}  \int_{B_R(y)} \zeta_{\varepsilon_n} dx =0.
\end{equation}
  We will show $\lim_{n\rightarrow \infty} \mathcal{I}_s(\zeta_{\varepsilon_n})=0$, which contradicts \eqref{2-15}. Indeed, for any $x\in \mathbb{R}^2$ and $R>0$, we have
  \begin{equation*}
  \begin{split}
     \int_{\mathbb{R}^2}\frac{\zeta_{\varepsilon_n}(x')}{|x-x'|^{2-2s}}dx' & \le \int_{|x'-x|<R}  \frac{\zeta_{\varepsilon_n}(x')}{|x-x'|^{2-2s}}dx'  + \int_{|x'-x|\ge R} \frac{\zeta_{\varepsilon_n}(x')}{|x-x'|^{2-2s}}dx' \\
       & \le \left(\int_{|x'-x|<R}\frac{1}{|x-x'|^{2-2s^2}}dx'\right)^{\frac{1}{1+s}} \|\zeta_{\varepsilon_n}\|_{L^{1+\frac{1}{s}}(B_R(x))}+\frac{1}{R^{2-2s}}\\
       & \le C_sR^{2s^2}\left(\sup_{y\in \mathbb{R}^2}  \int_{B_R(y)} \zeta_{\varepsilon_n} dx\right)^{\frac{s}{1+s}}+\frac{1}{R^{2-2s}},\\
  \end{split}
  \end{equation*}
  where $C_s$ is a positive number depending only on $s$. Hence
  \begin{equation*}
  \begin{split}
     \mathcal{I}_s(\zeta_{\varepsilon_n}) & =c_s\int_{\mathbb{R}^2}\int_{\mathbb{R}^2}\frac{\zeta_{\varepsilon_n}(x)\zeta_{\varepsilon_n}(x')}{|x-x'|^{2-2s}}dxdx' \\
       &\le C_sR^{2s^2}\left(\sup_{y\in \mathbb{R}^2}  \int_{B_R(y)} \zeta_{\varepsilon_n} dx\right)^{\frac{s}{1+s}}+\frac{C_s}{R^{2-2s}}.
  \end{split}
  \end{equation*}
  In view of \eqref{2-16}, we infer from the above inequality by first letting $n\rightarrow \infty$, then $R\rightarrow \infty$ that
  \begin{equation*}
    \lim_{n\rightarrow \infty} \mathcal{I}_s(\zeta_{\varepsilon_n})=0.
  \end{equation*}

  \emph{Step 2. Dichotomy excluded:}
  Suppose there exists a number $0<\beta<1$ such that for any $\epsilon>0$, there exist $N(\epsilon)\geq 1$ and $0\leq \zeta_{i,\varepsilon_n}\leq \zeta_{\varepsilon_n}, \,i=1,2,3$ satisfying
\begin{equation*}
	\begin{cases}
		\zeta_{\varepsilon_n}=\zeta_{1,\varepsilon_n}+\zeta_{2,\varepsilon_n}+\zeta_{3,\varepsilon_n},\\
		|\beta-\beta_{1,n}|+|1-\beta-\beta_{2,n}|+|\beta_{3,n}|<\epsilon,\quad \text{for}\ n\geq N(\epsilon),\\
		d_n:=\text{dist}(\text{supp}(\zeta_{1,\varepsilon_n}), \text{supp}(\zeta_{2,\varepsilon_n}))\rightarrow \infty, \quad \text{as}\ n\rightarrow \infty,
	\end{cases}	
\end{equation*}
where $\beta_{i,n}=\|\zeta_{1,\varepsilon_n}\|_{L^1(\mathbb{R}^2)}$, $i=1,2,3$. Using the diagonal argument, we obtain that there exists a subsequence, still denoted by $\{\zeta_{\varepsilon_n}\}_{n=1}^\infty$, such that
\begin{equation}\label{2-17}
	\begin{cases}
		\zeta_{\varepsilon_n}=\zeta_{1,\varepsilon_n}+\zeta_{2,\varepsilon_n}+\zeta_{3,\varepsilon_n},\\
		|\beta-\beta_{1,n}|+|1-\beta-\beta_{2,n}|+|\beta_{3,n}|\to 0,\quad \text{as}\ n\rightarrow \infty,\\
		d_n:=\text{dist}(\text{supp}(\zeta_{1,\varepsilon_n}), \text{supp}(\zeta_{2,\varepsilon_n}))\rightarrow \infty, \quad \text{as}\ n\rightarrow \infty.
	\end{cases}	
\end{equation}
In view of \eqref{zzz}, it must hold $\varepsilon_n\to 0^+$ when $n\to \infty$.
By the symmetry of $\mathcal{I}_s$, we have
\begin{equation*}
  \begin{split}
     \mathcal{I}_s(\zeta_{\varepsilon_n}) &= \mathcal{I}_s(\zeta_{1,\varepsilon_n}+\zeta_{2,\varepsilon_n}+\zeta_{3,\varepsilon_n})\\
	&=c_s\int_{\mathbb{R}^2}\int_{\mathbb{R}^2} \frac{\zeta_{1,\varepsilon_n}(x)\zeta_{1,\varepsilon_n}(x')}{|x-x'|^{2-2s}}dxdx'+c_s\int_{\mathbb{R}^2}\int_{\mathbb{R}^2} \frac{\zeta_{2,\varepsilon_n}(x)\zeta_{2,\varepsilon_n}(x')}{|x-x'|^{2-2s}}dxdx'\\
 & \ \ \ +2c_s\int_{\mathbb{R}^2}\int_{\mathbb{R}^2} \frac{\zeta_{1,\varepsilon_n}(x)\zeta_{2,\varepsilon_n}(x')}{|x-x'|^{2-2s}}dxdx'+c_s\int_{\mathbb{R}^2}\int_{\mathbb{R}^2} \frac{\left(2\zeta_{\varepsilon_n}(x)-\zeta_{3,\varepsilon_n}(x)\right)\zeta_{3,\varepsilon_n}(x')}{|x-x'|^{2-2s}}dxdx'\\
 &=:I_1+I_2+I_3+I_4.
  \end{split}
\end{equation*}
In view of \eqref{zzz}, using the bathtub principle for Riesz integrals, we have
\begin{equation*}
  I_i\le \beta^{1+s}_{i,n}A_s, \ \ \ \ \ i=1,2.
\end{equation*}
For $I_3$, we clearly have
\begin{equation*}
  I_3\le 2c_s d_n^{2s-2}=o_n(1),
\end{equation*}
as $n\to \infty$. Finally, we have
\begin{equation*}
  \begin{split}
     I_4 & \le 2c_s\int_{\mathbb{R}^2}\int_{\mathbb{R}^2} \frac{\zeta_{\varepsilon_n}(x)\zeta_{3,\varepsilon_n}(x')}{|x-x'|^{2-2s}}dxdx' \\
       & = 2c_s\int_{\mathbb{R}^2}\zeta_{\varepsilon_n}(x) \left(\int_{|x-x'|\ge1}+\int_{|x-x'|<1} \frac{\zeta_{3,\varepsilon_n}(x')}{|x-x'|^{2-2s}}dx'\right)dx\\
       & \le 2c_s\int_{\mathbb{R}^2}\zeta_{\varepsilon_n}(x) \left(\|\zeta_{3,\varepsilon_n}\|_{L^1(\mathbb{R}^2)}+C\|\zeta_{3,\varepsilon_n}\|_{L^{1+\frac{1}{s}}(\mathbb{R}^2)}     \right)dx \\
       & \le C_s\left(\|\zeta_{3,\varepsilon_n}\|_{L^1(\mathbb{R}^2)}+\|\zeta_{3,\varepsilon_n}\|^\frac{s}{1+s}_{L^1(\mathbb{R}^2)}\right)\\
       &=o_n(1),
  \end{split}
\end{equation*}
as $n\to \infty$. It follows from the above calculations that
\begin{equation*}
  \mathcal{I}_s(\zeta_{\varepsilon_n})\le \left(\beta^{1+s}+(1-\beta)^{1+s}\right)A_s+o_n(1),
\end{equation*}
as $n\to \infty$. Since $0<\beta<1$, we can choose a $\delta_0\in (0,1)$ such that
  \begin{equation*}
    \beta^{1+s}+(1-\beta)^{1+s}<\delta^{1-s}_0.
  \end{equation*}
Hence
\begin{equation}\label{2-18}
  \mathcal{I}_s(\zeta_{\varepsilon_n})\le \delta^{1-s}_0A_s+o_n(1),
\end{equation}
as $n\to \infty$. On the other hand, by Lemma \ref{lem6}, we have
\begin{equation}\label{2-19}
  \mathcal{I}_s(\zeta_{\varepsilon_n})\ge \left(\frac{\delta_0+1}{2}\right)^{1-s}A_s+o_n(1),
\end{equation}
as $n\to \infty$. By letting $n\to \infty$, we infer from \eqref{2-18} and \eqref{2-19} that
\begin{equation*}
  \frac{\delta_0+1}{2}\le \delta_0.
\end{equation*}
This contradicts the fact that $\delta_0\in (0,1)$. The proof is thus complete.
\end{proof}

\begin{lemma}\label{lem9}
  There exists a family of points $\{y_\varepsilon\}_{\varepsilon>0}$ on the $x_1$-axis, such that for arbitrary $\eta\in(0,1)$, there exists a positive number $R_\eta$ satisfying
  \begin{equation*}
     \int_{B_{R_\eta}(y_\varepsilon)}\zeta_\varepsilon dx> 1-\eta,\ \ \ \forall\,\varepsilon>0.
  \end{equation*}
\end{lemma}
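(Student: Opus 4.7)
The plan is to leverage the Steiner symmetry of $\omega_\varepsilon$ (and hence of $\zeta_\varepsilon$) with respect to $\theta$ in order to promote the concentration centers produced by Lemma~\ref{lem7} onto the $x_1$-axis. Steiner symmetry with respect to $\theta$, combined with $\mathrm{supp}(\omega_\varepsilon)\subset S\subset \mathcal{U}_N$ (in particular, $\omega_\varepsilon$ is supported in the right half-plane), gives $\zeta_\varepsilon(x_1,x_2)=\zeta_\varepsilon(x_1,-x_2)$ for a.e.\ $x\in\mathbb{R}^2$.

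First, I would apply Lemma~\ref{lem7} with $\eta = 1/3$ to obtain a positive number $R_0$ and points $\tilde{y}_\varepsilon\in \mathbb{R}^2$ such that $\int_{B_{R_0}(\tilde{y}_\varepsilon)}\zeta_\varepsilon\,dx>2/3$ for every $\varepsilon>0$. The reflection symmetry above then gives $\int_{B_{R_0}(\tilde{y}_\varepsilon^*)}\zeta_\varepsilon\,dx>2/3$, where $\tilde{y}_\varepsilon^*$ denotes the reflection of $\tilde{y}_\varepsilon$ across the $x_1$-axis. Since $\int_{\mathbb{R}^2}\zeta_\varepsilon\,dx=1$ and $2/3+2/3>1$, these two balls cannot be disjoint, which forces $|\tilde{y}_\varepsilon-\tilde{y}_\varepsilon^*|<2R_0$, i.e., the second coordinate of $\tilde{y}_\varepsilon$ satisfies $|\tilde{y}_{\varepsilon,2}|<R_0$. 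Setting $y_\varepsilon:=(\tilde{y}_{\varepsilon,1},0)$ yields points on the $x_1$-axis with $B_{R_0}(\tilde{y}_\varepsilon)\subset B_{2R_0}(y_\varepsilon)$, and therefore $\int_{B_{2R_0}(y_\varepsilon)}\zeta_\varepsilon\,dx>2/3$ for every $\varepsilon>0$.

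Next, for arbitrary $\eta\in(0,1/3)$, I would invoke Lemma~\ref{lem7} a second time to obtain $R_\eta'>0$ and points $z_\varepsilon\in\mathbb{R}^2$ (possibly depending on $\eta$) with $\int_{B_{R_\eta'}(z_\varepsilon)}\zeta_\varepsilon\,dx>1-\eta>2/3$. By exactly the same total-mass argument, $B_{2R_0}(y_\varepsilon)$ and $B_{R_\eta'}(z_\varepsilon)$ must intersect, so $|y_\varepsilon-z_\varepsilon|<2R_0+R_\eta'$. Putting $R_\eta:=2R_0+2R_\eta'$ then gives $B_{R_\eta'}(z_\varepsilon)\subset B_{R_\eta}(y_\varepsilon)$, whence $\int_{B_{R_\eta}(y_\varepsilon)}\zeta_\varepsilon\,dx>1-\eta$. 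For $\eta\in[1/3,1)$ the choice $R_\eta=2R_0$ trivially works. Crucially, the centers $y_\varepsilon$ are fixed once and for all (chosen in the first step); only the radius $R_\eta$ is then adjusted to $\eta$, which is what the statement requires.

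The only nontrivial point is the symmetrization-plus-mass argument that pins the concentration center to the axis; this is the sole place where the Steiner symmetry of $\omega_\varepsilon$ established in Lemma~\ref{lem4} is indispensable. Everything else is the elementary observation that two subsets of $\mathbb{R}^2$ each carrying strictly more than half of the unit mass of $\zeta_\varepsilon$ must meet.
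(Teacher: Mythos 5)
Your proof is correct and follows essentially the same route as the paper's: first pin a concentration center carrying a fixed fraction of the mass onto the $x_1$-axis using the Steiner symmetry, then for each $\eta$ invoke Lemma \ref{lem7} again and use the fact that two balls each carrying more than half of the unit mass must intersect. Your reflection-plus-overlap argument in the first step is in fact a welcome elaboration of the paper's terse assertion that ``we may assume $y^0_\varepsilon\in\mathbb{R}\times\{0\}$.''
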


\begin{proof}
  From Lemma \ref{lem7}, we know that there exists a family of points $\{y^0_\varepsilon\}_{\varepsilon>0}$ in $\mathbb{R}^2$ and $R_0$
  \begin{equation*}
     \int_{B_{R_0}(y^0_\varepsilon)}\zeta_\varepsilon dx> \frac{5}{6},\ \ \ \forall\,\varepsilon>0.
  \end{equation*}
  Note that $\zeta_\varepsilon$ is Steiner symmetric with respect to $\theta$. So we may assume that $y^0_\varepsilon\in \mathbb{R}\times\{0\}$. For arbitrary $\eta\in(0,1/2)$, we will show that there exists some $R_\eta>R_0$, such that
  \begin{equation*}
     \int_{B_{R_\eta}(y^0_\varepsilon)}\zeta_\varepsilon dx>1-\eta,\ \ \ \forall\,\varepsilon>0.
  \end{equation*}
  In fact, by Lemma \ref{lem7}, there exists a $\tilde{R}_\eta>R_0$ and a family of points $\{y^\eta_\varepsilon\}_{\varepsilon>0}$ in $\mathbb{R}^2$, such that
  \begin{equation*}
     \int_{B_{\tilde{R_\eta}}(y^\eta_\varepsilon)}\zeta_\varepsilon dx>1-\eta,\ \ \ \forall\,\varepsilon>0.
  \end{equation*}
  We now prove that $|y^\eta_\varepsilon-y^0_\varepsilon|\le 2\tilde{R}_\eta$ for all $\varepsilon>0$. Suppose not, then $B_{\tilde{R}_\eta}(y^0_\varepsilon)\cap B_{\tilde{R}_\eta}(y^\eta_\varepsilon)=\varnothing$, and we have
  \begin{equation*}
    1=\int_{\mathbb{R}^2}\zeta_\varepsilon dx\ge \int_{B_{\tilde{R}_\eta}(y^0_\varepsilon)}\zeta_\varepsilon dx+\int_{B_{\tilde{R}_\eta}(y^\eta_\varepsilon)}\zeta_\varepsilon dx\ge \frac{5}{6}+1-\eta>1.
  \end{equation*}
  This is a contradiction. Let $R_\eta=3\tilde{R}_\eta$. Then we have
  \begin{equation*}
     \int_{B_{R_\eta}(y^0_\varepsilon)}\zeta_\varepsilon dx\ge \int_{B_{\tilde{R}_\eta}(y^\eta_\varepsilon)}\zeta_\varepsilon dx>1-\eta,\ \ \ \forall\,\varepsilon>0.
  \end{equation*}
\end{proof}

Coming back to the original functions $\{\omega_\varepsilon\}_{\varepsilon>0}$, we immediately obtain the following lemma.
\begin{lemma}\label{lem10}
   There exists a family of points $\{z_\varepsilon\}_{\varepsilon>0}$ on the $x_1$-axis, such that for arbitrary $\eta\in(0,1)$, there exists a positive number $R_\eta$ satisfying
  \begin{equation*}
     \int_{B_{R_\eta\varepsilon}(z_\varepsilon)}\omega_\varepsilon dx> 1-\eta,\ \ \ \forall\,\varepsilon>0.
  \end{equation*}
  Moreover, $z_\varepsilon\to(1,0)$ when $\varepsilon \to 0^+$.
\end{lemma}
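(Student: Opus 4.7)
The plan is to simply transport the concentration statement of Lemma \ref{lem9} for $\zeta_\varepsilon$ back to $\omega_\varepsilon$ by undoing the scaling, and then use the angular momentum constraint $L(\omega_\varepsilon)=1$ to locate the concentration point.

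First, let $\{y_\varepsilon\}$ be the family produced by Lemma \ref{lem9} (which lies on the $x_1$-axis by the Steiner symmetry in $\theta$) and set $z_\varepsilon:=\varepsilon y_\varepsilon$. Then $z_\varepsilon$ also lies on the $x_1$-axis. Applying the change of variable $x\mapsto x/\varepsilon$ in the defining identity $\zeta_\varepsilon(x)=\varepsilon^2\omega_\varepsilon(\varepsilon x)$ (see \eqref{zzz0}), one gets immediately for each $\eta\in(0,1)$ and the $R_\eta$ from Lemma \ref{lem9},
\begin{equation*}
\int_{B_{R_\eta\varepsilon}(z_\varepsilon)}\omega_\varepsilon(y)\,dy=\int_{B_{R_\eta}(y_\varepsilon)}\zeta_\varepsilon(x)\,dx>1-\eta,\qquad\forall\,\varepsilon>0.
\end{equation*}
This is the first half of the lemma.

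Next I would show that $\{z_\varepsilon\}_{\varepsilon>0}$ stays in a bounded set. Since $\mathrm{supp}(\omega_\varepsilon)\subset S\subset B_{3/2}(0)$ and the ball $B_{R_\eta\varepsilon}(z_\varepsilon)$ must intersect $\mathrm{supp}(\omega_\varepsilon)$ (its integral is positive), we have $|z_\varepsilon|\le 3/2+R_\eta\varepsilon$. For the same reason, for $\varepsilon$ small enough the ball meets $S$, and since every point of $S$ has first coordinate $\ge (1/2)\cos(\pi/(2N))>0$, the first coordinate of $z_\varepsilon$ is positive for small $\varepsilon$.

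The main remaining step, and essentially the only nontrivial one, is to pin $|z_\varepsilon|\to 1$ using the constraint $L(\omega_\varepsilon)=1$. Fix $\eta\in(0,1)$ and split
\begin{equation*}
1=L(\omega_\varepsilon)=\int_{B_{R_\eta\varepsilon}(z_\varepsilon)}|x|^2\omega_\varepsilon(x)\,dx+\int_{S\setminus B_{R_\eta\varepsilon}(z_\varepsilon)}|x|^2\omega_\varepsilon(x)\,dx.
\end{equation*}
On $B_{R_\eta\varepsilon}(z_\varepsilon)$ we have $\bigl||x|^2-|z_\varepsilon|^2\bigr|\le (2|z_\varepsilon|+R_\eta\varepsilon)R_\eta\varepsilon=O(\varepsilon)$ as $\varepsilon\to 0^+$ (using the boundedness of $z_\varepsilon$); on the complement in $S$ the integrand is bounded by $(3/2)^2=9/4$, and the mass there is at most $\eta$. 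Combining these bounds with $M(\omega_\varepsilon)=1$ yields
\begin{equation*}
(1-\eta)|z_\varepsilon|^2-O(\varepsilon)\le 1\le |z_\varepsilon|^2+\tfrac{9}{4}\eta+O(\varepsilon),
\end{equation*}
whence, letting first $\varepsilon\to 0^+$ and then $\eta\to 0^+$, one obtains $|z_\varepsilon|\to 1$. Since $z_\varepsilon$ sits on the positive $x_1$-axis for small $\varepsilon$, this forces $z_\varepsilon\to(1,0)$, completing the proof. No real obstacle is anticipated; the only mild care is tracking the dependence of the error terms on $\eta$ in the double-limit argument.
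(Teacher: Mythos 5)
Your proof is correct and follows essentially the same route as the paper, which simply asserts that the existence of $\{z_\varepsilon\}$ follows from Lemma \ref{lem9} via the rescaling \eqref{zzz0} and that $z_\varepsilon\to(1,0)$ follows from the constraint $L(\omega_\varepsilon)=1$. You have merely written out the change of variables and the double-limit bookkeeping (first $\varepsilon\to0^+$ with $\eta$ fixed, then $\eta\to0^+$) that the paper leaves implicit, and done so accurately.
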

\begin{proof}
  Recalling \eqref{zzz0}, the existence of $\{z_\varepsilon\}_{\varepsilon>0}$ clearly follows from Lemma \ref{lem9}. By virtue of the constraint $L(\omega_\varepsilon)=1$, it is easy to see that $z_\varepsilon\to(1,0)$ when $\varepsilon \to 0^+$. The proof is thus complete.
\end{proof}

Set
	\begin{equation*}
		\rho:=\min\left\{\frac{1}{6}, \ \frac{1}{2}\sin\left(\frac{\pi}{2N}\right)\right\}.
	\end{equation*}
The first localization result is as follows.
\begin{lemma}\label{lem11}
For all sufficiently small $\varepsilon>0$, it holds
  \begin{equation}\label{2-20}
    \text{supp}(\omega_\varepsilon)\subseteq B_\rho\left((1,0)\right)\cup D,
  \end{equation}
  where
  \begin{equation*}
    D:=\Big{\{}x\in S: {1}/{2}\le |x|\le {1}/{2}+{1}/{6}\ \,\text{or}\, \ {3}/{2}-{1}/{6}\le |x|\le {3}/{2}\Big{\}}.
  \end{equation*}
  Moreover,
  \begin{equation}\label{2-21}
    \lim_{\varepsilon\to 0^+}\int_{ S\backslash B_{\rho/2}\left((1,0)\right)} \omega_\varepsilon dx=0.
  \end{equation}
\end{lemma}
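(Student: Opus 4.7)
The lemma has two parts. Part \eqref{2-21} is an immediate corollary of Lemma~\ref{lem10}: given $\eta \in (0,1)$, Lemma~\ref{lem10} furnishes $R_\eta > 0$ and $z_\varepsilon \to (1,0)$ on the $x_1$-axis with $\int_{B_{R_\eta\varepsilon}(z_\varepsilon)} \omega_\varepsilon\,dx > 1-\eta$ for every $\varepsilon>0$; since $R_\eta\varepsilon \to 0$ and $|z_\varepsilon - (1,0)|\to 0$, for all sufficiently small $\varepsilon$ we have $B_{R_\eta\varepsilon}(z_\varepsilon) \subset B_{\rho/2}((1,0))$, whence $\int_{S \setminus B_{\rho/2}((1,0))}\omega_\varepsilon\,dx \le \eta$. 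As $\eta$ is arbitrary, \eqref{2-21} follows.

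Part \eqref{2-20} is the substantive point. Set $\Omega := S \setminus (B_\rho((1,0)) \cup D)$. By Lemma~\ref{lem4} and assumption (A), $\mathrm{supp}(\omega_\varepsilon)\cap S = \{x\in S:\psi_\varepsilon(x)>0\}$ up to null sets, so it suffices to prove $\psi_\varepsilon\le 0$ on $\Omega$ once $\varepsilon$ is small enough. Split $\omega_\varepsilon=\omega^{(1)}+\omega^{(2)}$ with $\omega^{(1)}:=\omega_\varepsilon\mathbf{1}_{B_{\rho/2}((1,0))}$. For $x\in\Omega$, every $x'\in\mathrm{supp}\,\omega^{(1)}$ satisfies $|x-x'|\ge\rho/2$, and for $k\neq 0$ the separation $|x-Q_{2k\pi/N}x'|$ is bounded below by a positive constant depending only on $N$; hence $\mathcal{K}_s\omega^{(1)}(x)\le C(\rho,N,s)$. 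For the tail piece, the uniform bound $\omega^{(2)}\le\varepsilon^{-2}$ (from $\omega_\varepsilon=\varepsilon^{-2}f(\psi_\varepsilon)$ and $\sup f=1$) combined with $m_\varepsilon:=M(\omega^{(2)})\to 0$ from \eqref{2-21} and the bathtub principle (cf.\ Lemma~\ref{bath}) gives
\begin{equation*}
\int \frac{\omega^{(2)}(x')}{|x-x'|^{2-2s}}\,dx' \le \frac{\pi^{1-s}}{s}\,m_\varepsilon^{s}\,\varepsilon^{-(2-2s)},
\end{equation*}
while the contributions from $k\neq 0$ are $O(m_\varepsilon)$; therefore $\mathcal{K}_s\omega^{(2)}(x)=o(1)\cdot\varepsilon^{-(2-2s)}$.

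To close the argument one needs a lower bound $\mu_\varepsilon\ge c_1\varepsilon^{-(2-2s)}$. Using the Legendre duality $J(\varepsilon^2\omega_\varepsilon)=\varepsilon^2\omega_\varepsilon\psi_\varepsilon-F(\psi_\varepsilon)$ on the support, together with the constraints $M(\omega_\varepsilon)=L(\omega_\varepsilon)=1$, one derives the identity
\begin{equation*}
\mu_\varepsilon = \mathcal{E}_\varepsilon(\omega_\varepsilon) + \frac{1}{N}E_s(\omega_\varepsilon) + \frac{\alpha_\varepsilon}{2} - \frac{1}{\varepsilon^2}\int_S F(\psi_\varepsilon)\,dx.
\end{equation*}
Lemma~\ref{lem5} gives $\mathcal{E}_\varepsilon(\omega_\varepsilon)\ge c\varepsilon^{-(2-2s)}$, and the rescaling $\zeta_\varepsilon(x)=\varepsilon^{2}\omega_\varepsilon(\varepsilon x)$ together with Lemma~\ref{lem6} yields an analogous lower bound $E_s(\omega_\varepsilon)\ge c\varepsilon^{-(2-2s)}$ (the $k=0$ term of $E_s$ captures $\mathcal{I}_s(\zeta_\varepsilon)\varepsilon^{-(2-2s)}$ while the $k\neq 0$ terms are $O(1)$). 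The correction $\varepsilon^{-2}\int F(\psi_\varepsilon)$ is then controlled via the layer-cake identity $\int F(\psi_\varepsilon)=\int_0^\infty f(t)\,\mathrm{meas}\{\psi_\varepsilon>t\}\,dt$ combined with the Markov-type bound $\mathrm{meas}\{\psi_\varepsilon>t\}\le\varepsilon^2/f(t)$ for $t>0$. Together with a uniform-in-$\varepsilon$ bound on $\alpha_\varepsilon$ (which follows from the identity above, the concentration of Lemma~\ref{lem10}, and the constraint $L=1$), this produces the desired $\mu_\varepsilon\ge c_1\varepsilon^{-(2-2s)}$.

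Putting everything together, since $|x|\le 3/2$ on $\Omega$ implies $\alpha_\varepsilon|x|^2/2=O(1)$, one obtains
\begin{equation*}
\psi_\varepsilon(x) \le C(\rho,N,s) + o(1)\cdot\varepsilon^{-(2-2s)} + O(1) - c_1\varepsilon^{-(2-2s)} < 0
\end{equation*}
for all $\varepsilon$ sufficiently small, which yields \eqref{2-20}. \textbf{The main obstacle is the lower bound on $\mu_\varepsilon$}: both the kinetic contribution $E_s/N$ and the correction $\varepsilon^{-2}\int F(\psi_\varepsilon)$ are a priori of the same order $\varepsilon^{-(2-2s)}$, so accurately comparing their sizes requires exploiting the variational structure of the optimizer beyond the gross a priori controls, together with the uniform bound on $\alpha_\varepsilon$ alluded to above.
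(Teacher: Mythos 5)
The first half of your argument (the limit \eqref{2-21}) is exactly the paper's, and your upper bounds on $\mathcal{K}_s\omega_\varepsilon$ outside the concentration region (via the splitting $\omega^{(1)}+\omega^{(2)}$ and the bathtub principle) reproduce the paper's estimate \eqref{2-26}. The problem is the second half. Your strategy for \eqref{2-20} requires two inputs: a uniform bound on $\alpha_\varepsilon$ and a lower bound $\mu_\varepsilon\ge c_1\varepsilon^{-(2-2s)}$. In the paper these are Lemmas \ref{lem12} and \ref{lem13}, which come \emph{after} Lemma \ref{lem11} and whose proofs use its conclusion (Lemma \ref{lem12} invokes \eqref{2-20} directly; Lemma \ref{lem13} needs $\alpha_\varepsilon$ bounded). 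So you must produce both bounds independently, and your sketch does not. The identity $\mu_\varepsilon=\mathcal{E}_\varepsilon(\omega_\varepsilon)+\frac1N E_s(\omega_\varepsilon)+\frac{\alpha_\varepsilon}{2}-\varepsilon^{-2}\int_S F(\psi_\varepsilon)\,dx$ is correct, but the only available control on the last term is $\varepsilon^{-2}\int_S F(\psi_\varepsilon)\le\sup_S(\psi_\varepsilon)_+\le \frac{c_s\pi^{1-s}}{s}\varepsilon^{-(2-2s)}+O(|\alpha_\varepsilon|)+(\mu_\varepsilon)_-+C$, whereas the positive contributions are at best $\bigl(\tfrac12+\tfrac1{2N}\bigr)A_s\,\varepsilon^{-(2-2s)}$; since $A_s<c_s\pi^{1-s}/s$ (by rearrangement of the inner integral), the right-hand side of the identity is not positive at this level of precision, and $\alpha_\varepsilon$ still appears unbounded. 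You flag this yourself as ``the main obstacle,'' but flagging it does not close it: as written the key inequality $\mu_\varepsilon\ge c_1\varepsilon^{-(2-2s)}$ is unproved, and with it the whole of \eqref{2-20}. A further warning sign is that your argument, if it worked, would never use the collar $D$: it would give $\psi_\varepsilon\le0$ on all of $S\setminus B_\rho((1,0))$, a conclusion that at this stage of the paper is only reachable with the downstream multiplier bounds.

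The paper avoids all of this by a three-point comparison that needs \emph{no} a priori control of $\alpha_\varepsilon$ or $\mu_\varepsilon$. Assuming a bad point $\bar x\in S\setminus(B_\rho((1,0))\cup D)$ with $\psi_\varepsilon(\bar x)\ge0$, one compares with points $y$ where $\psi_\varepsilon(y)\le1$ (which must exist both in the concentration ball, by $M(\omega_\varepsilon)=1$, and in $S\setminus B_\rho((1,0))$): the difference $\psi_\varepsilon(y)-\psi_\varepsilon(\bar x)$ eliminates $\mu_\varepsilon$ entirely, and a first choice of $y^1$ with $\operatorname{sign}(\alpha_\varepsilon)=\operatorname{sign}(|y^1|^2-|\bar x|^2)$ and $\bigl||y^1|^2-|\bar x|^2\bigr|\ge\frac1{12}$ yields the conditional bound $|\alpha_\varepsilon|\le 48\eta\,\varepsilon^{-(2-2s)}+C$ (this is where excluding $D$ is essential, to guarantee radial room on one side). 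A second choice $y^2$ in the concentration ball, where $\mathcal{K}_s\omega_\varepsilon\ge c_s(1-\sigma)(2R_\sigma)^{2s-2}\varepsilon^{-(2-2s)}$, then gives a contradiction once $\sigma,\eta$ are fixed with $c_s(1-\sigma)(2R_\sigma)^{2s-2}>97\eta$. If you want to keep your direct approach, you would have to first prove the $\alpha_\varepsilon$ and $\mu_\varepsilon$ estimates without \eqref{2-20}; the paper's ordering suggests this is genuinely harder than the contradiction argument it actually uses.
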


\begin{proof}
  From Lemma \ref{lem10}, we see that for every $\sigma\in (0,1)$, there exists a positive number $R_\sigma>[f(1)\pi]^{-1/2}$ (independent of $\varepsilon$), such that
  \begin{equation*}
     \int_{B_{R_\sigma\varepsilon}(z_\varepsilon)}\omega_\varepsilon dx> 1-\sigma,\ \ \ \forall\,\varepsilon>0.
  \end{equation*}
  If $\sigma$ is fixed, then
\begin{equation*}
    	B_{R_\sigma\varepsilon}(z_\varepsilon)\subset \left\{x\in S: 1-\frac{\rho}{2}<|x|<1+\frac{\rho}{2}\right\}
    \end{equation*}
for $\varepsilon$ small. It follows that
\begin{equation*}
   \lim_{\varepsilon\to 0^+}\int_{ S\backslash B_{\rho/2}\left((1,0)\right)} \omega_\varepsilon dx=0.
\end{equation*}
Thus \eqref{2-20} is proved. By the definition of $K_s$, we have
\begin{equation}\label{2-25}
  \mathcal{K}_s\omega_\varepsilon(x)=\int_{S}K_s(x,x')\omega_\varepsilon(x')dx'\ge\frac{c_s}{(2R_\sigma)^{2-2s}}\frac{1-\sigma}{\varepsilon^{2-2s}} \ \ \ \text{whenever}\ \ x\in B_{R_\sigma\varepsilon}(z_\varepsilon).
\end{equation}
On the other hand, by a rearrangement argument, for every $\eta>0$ and all sufficiently small $\varepsilon$ we have
\begin{equation}\label{2-26}
    	\mathcal{K}_s\omega_\varepsilon(x)\le\frac{\eta}{\varepsilon^{2-2s}}+C, \ \ \ \text{whenever}\ \ x\in S\backslash B_\rho\left((1,0)\right).
    \end{equation}
    We now prove that $\psi_\varepsilon<0$ in $S'=S\backslash\left(B_\rho\left((1,0)\right)\cup D\right)$ if $\varepsilon$ is sufficiently small. We argue by contradiction. If the statement was false, then there exists a point $\bar{x}\in S'$ satisfying $\psi_\varepsilon(x)\ge 0$. Then for any $y\in S$ such that $\psi_\varepsilon(y)\le 1$ we have clearly
    \begin{equation}\label{2-28}
      1\ge \psi_\varepsilon(y)-\psi_\varepsilon(\bar{x})=\mathcal{K}_s\omega_\varepsilon(y)-\mathcal{K}_s\omega_\varepsilon(\bar{x})+\frac{\alpha_\varepsilon}{2}\left(|y|^2-|\bar{x}|^2\right).
    \end{equation}
    When $\varepsilon$ is small enough, we can find a point $y^1\in S\backslash B_\rho\left((1,0)\right)$ such that $\psi_\varepsilon(y_1)\le 1$, and which satisfies the two conditions
    \begin{equation*}
      \text{sign}(\alpha_\varepsilon)=\text{sign}(|y^1|^2-|\bar{x}|^2)\ \ \text{and}\ \ \left||y^1|^2-|\bar{x}|^2\right|\ge \frac{1}{12}.
    \end{equation*}
    Substituting $y^1$ for $y$ in the inequality \eqref{2-28}, we obtain $|\alpha_\varepsilon|\le 24 \left(\mathcal{K}_s\omega_\varepsilon(x)-\mathcal{K}_s\omega_\varepsilon(y^1)\right)$. Combining this with \eqref{2-26}, we get
    \begin{equation}\label{2-29}
      |\alpha_\varepsilon|\le \frac{48\eta}{\varepsilon^{2-2s}}+C.
    \end{equation}
    On the other hand, we may choose $y^2\in B_{R_\sigma\varepsilon}(z_\varepsilon)$ such that $\psi_\varepsilon(y^2)\le 1$. Otherwise, we have $\omega_\varepsilon(x)\ge f(1)/\varepsilon^2$ for all $x\in B_{R_\sigma\varepsilon}(z_\varepsilon)$, and hence
    \begin{equation*}
      \int_S \omega_\varepsilon dx\ge \frac{f(1)}{\varepsilon^2}\pi R^2_\sigma\varepsilon^2>1.
    \end{equation*}
     This is contrary to the constraint $M(\omega_\varepsilon)=1$. Now inequality \eqref{2-28} combined with \eqref{2-25} and \eqref{2-29} yields
        \begin{equation}\label{2-30}
    	\begin{split}
    	\frac{c_s}{(2R_\sigma)^{2-2s}}\frac{1-\sigma}{\varepsilon^{2-2s}}\le \mathcal{K}_s\omega_\varepsilon(y^2)&\le \mathcal{K}_s\omega_\varepsilon(\bar{x})+\frac{|\alpha_\varepsilon|}{2}\left(|y^2|^2-|\bar{x}|^2\right)\\
    	&\le \frac{97\eta}{\varepsilon^{2-2s}}+C.
    	\end{split}
    \end{equation}
    Now let $\sigma$ and $\eta$ be fixed so that
    \begin{equation*}
      \frac{c_s(1-\sigma)}{(2R_\sigma)^{2-2s}}>97\eta.
    \end{equation*}
    We get a contradiction from \eqref{2-30} when $\varepsilon$ is small enough. In other words, we have established that $\text{supp}(\omega_\varepsilon)\cap S'=\varnothing$,
which completes the proof.
    \end{proof}

\subsection{Estimates for the Lagrange multipliers $\alpha_\varepsilon$ and $\mu_\varepsilon$}\label{s5}
Now we estimate the Lagrange multiplier $\alpha_\varepsilon$ and $\mu_\varepsilon$. First, we have
\begin{lemma}\label{lem12}Let $1/2\le s<1$. As $\varepsilon\to 0^+$, it holds
	\begin{equation}\label{2-31}
		\alpha_\varepsilon\to \sum\limits_{k=1}^{N-1}\frac{c_s(1-s)}{|(1,0)-Q_{\frac{2k\pi}{N}}|^{2-2s}}.
	\end{equation}
\end{lemma}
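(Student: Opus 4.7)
The strategy is to extract $\alpha_\varepsilon$ from the weak formulation \eqref{1-9} by testing against a function localized to a single vortex bubble, and then to pass to the limit using the concentration estimates of Lemmas \ref{lem10}--\ref{lem11}. Throughout I work with the $N$-fold symmetric extension $\omega_{ro,\varepsilon}(x):=\sum_{k=0}^{N-1} \omega_\varepsilon(Q_{-2k\pi/N}x)$, which satisfies \eqref{1-9} on $\mathbb{R}^2$ with the same $\alpha_\varepsilon$ and stream function $\psi_\varepsilon = \mathcal{G}_s\omega_{ro,\varepsilon} + (\alpha_\varepsilon/2)|x|^2 - \mu_\varepsilon$.

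First I would fix $\eta \in C_0^\infty(B_\rho((1,0)))$ with $\eta\equiv 1$ on $B_{\rho/2}((1,0))$. By Lemma \ref{lem11} combined with Lemma \ref{lem10}, for all sufficiently small $\varepsilon$ the part of $\text{supp}(\omega_{ro,\varepsilon})$ inside $B_\rho((1,0))$ lies in $\{\eta\equiv 1\}$, while the remaining $N-1$ bubbles lie in $\{\eta\equiv 0\}$. Testing \eqref{1-9} with $\varphi(x):=\eta(x)\,x_2$ and using $\nabla^\perp \psi_\varepsilon \cdot e_2 = -\partial_{x_1}\psi_\varepsilon$, the $\nabla\eta$ contribution drops out, leaving
\begin{equation*}
\int_{B_\rho((1,0))} \omega_{ro,\varepsilon}\,\partial_{x_1}\mathcal{G}_s\omega_{ro,\varepsilon}\,dx \;+\; \alpha_\varepsilon \int_{B_\rho((1,0))} \omega_{ro,\varepsilon}\,x_1\,dx \;=\; 0.
\end{equation*}

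Next I would split $\omega_{ro,\varepsilon} = \sum_{k=0}^{N-1}\omega^{(k)}_\varepsilon$ (with $\omega^{(k)}_\varepsilon$ supported near $Q_{2k\pi/N}(1,0)$), so that the first integral becomes a self-interaction plus $N-1$ cross terms. The self-interaction $\int \omega^{(0)}_\varepsilon\,\partial_{x_1}\mathcal{G}_s\omega^{(0)}_\varepsilon\,dx$ vanishes by the antisymmetry $\partial_{x_1}G_s(-z)=-\partial_{x_1}G_s(z)$: for $s>1/2$ this is immediate from Fubini since $\partial_{x_1}G_s\in L^1_{\mathrm{loc}}$, whereas in the endpoint $s=1/2$ (where $\partial_{x_1}G_{1/2}$ is a Riesz-transform kernel) one mollifies $\omega^{(0)}_\varepsilon$, applies the antisymmetry to the smooth approximants, and passes to the limit using the $L^p$-continuity of $\partial_{x_1}\mathcal{G}_{1/2}$. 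Each cross term (for $k\ge 1$) involves supports separated by a uniformly positive distance, so $\partial_{x_1}G_s$ is smooth on the relevant product set, and the weak convergence $\omega^{(k)}_\varepsilon\rightharpoonup \pmb{\delta}_{Q_{2k\pi/N}(1,0)}$ coming from Lemmas \ref{lem10}--\ref{lem11} yields
\begin{equation*}
\int \omega^{(0)}_\varepsilon(x)\,\partial_{x_1}\mathcal{G}_s\omega^{(k)}_\varepsilon(x)\,dx \;\longrightarrow\; \partial_{x_1}G_s\bigl((1,0)-Q_{2k\pi/N}(1,0)\bigr).
\end{equation*}

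Finally, a direct computation from $G_s(z)=c_s|z|^{2s-2}$ together with the geometric identity $|(1,0)-Q_{2k\pi/N}(1,0)|^2 = 2(1-\cos(2k\pi/N))$ gives
\begin{equation*}
\partial_{x_1}G_s\bigl((1,0)-Q_{2k\pi/N}(1,0)\bigr) \;=\; -\frac{c_s(1-s)}{|(1,0)-Q_{2k\pi/N}(1,0)|^{2-2s}}.
\end{equation*}
Combined with $\int \omega^{(0)}_\varepsilon x_1\,dx\to 1$ (since $\omega^{(0)}_\varepsilon$ has mass $1$ and concentrates at $(1,0)$), the identity displayed above then yields \eqref{2-31}. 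I expect the main obstacle to be the treatment of the self-interaction in the critical case $s=1/2$, where $\partial_{x_1}G_{1/2}$ is a critical Calder\'on--Zygmund kernel and the cancellation must be obtained via mollification/principal-value arguments rather than absolute integrability.
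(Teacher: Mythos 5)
Your proposal is correct and follows essentially the same route as the paper: both extract $\alpha_\varepsilon$ by testing the localized weak formulation with a test function that is linear (in your case $x_2$, in the paper's case $y\cdot x$ with $y=(0,-1)$) near the bubble at $(1,0)$, discard the self-interaction by the antisymmetry of $\nabla G_s$, and pass to the limit in the cross terms using the concentration of Lemmas \ref{lem10}--\ref{lem11}; your decomposition of the symmetrized vorticity into $N$ bubbles interacting through $G_s$ is just a reformulation of the paper's single-sector vorticity interacting through the symmetrized kernel $K_s$. If anything, you are more explicit than the paper about the two delicate points, namely the cancellation of the self-interaction at the endpoint $s=1/2$ and the evaluation of $\partial_{x_1}G_s$ at the polygon vertices.
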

\begin{proof}
  Let $y\in \mathbb{R}^2$ be arbitrary. Let $\varphi\in C_0^\infty\left(B_{2\rho}\left((1,0)\right)\right)$ so that $\varphi(x)=y\cdot x$ on $B_\rho\left((1,0)\right)$. Let $F_\varepsilon(\tau):=F(\tau)/\varepsilon^2$. Then $F_\varepsilon(\psi_\varepsilon)=0$ on $\partial B_{2\rho}\left((1,0)\right)$. By integration by parts we get
\begin{equation}\label{2-32}
  \int_{\mathbb{R}^2}\omega_\varepsilon \nabla^\perp(\mathcal{K}_s\omega_\varepsilon+\frac{\alpha_\varepsilon}{2} |x|^2)\cdot\nabla \varphi dx=-\int_S F_\varepsilon\left(\psi_\varepsilon)(\partial_{x_2}\partial_{x_1}\varphi-\partial_{x_1}\partial_{x_2}\varphi\right)dx=0.
\end{equation}
On the other hand, by \eqref{2-20} we have
\begin{equation}\label{2-33}
 \int_{\mathbb{R}^2}\omega_\varepsilon\nabla^\perp\left(\frac{\alpha_\varepsilon}{2} |x|^2\right)\cdot\nabla \varphi dx=\alpha_\varepsilon\int_{B_\rho\left((1,0)\right)}\omega_\varepsilon(x) x^\perp\cdot y dx,
\end{equation}
and
\begin{equation}\label{2-34}
\begin{split}
   \int_{\mathbb{R}^2}&\omega_\varepsilon\nabla^\perp\mathcal{K}_s\omega_\varepsilon \cdot\nabla \varphi dx \\
    =&\int_{B_\rho\left((1,0)\right)}\omega_\varepsilon(x)\nabla^\perp_x\left(\int_{B_\rho\left((1,0)\right)}+\int_{S\backslash B_\rho\left((1,0)\right)}G_s(x-x')\omega_\varepsilon(x')dx'\right)\cdot y dx\\
      =& \int_{B_\rho\left((1,0)\right)}\omega_\varepsilon(x)\nabla^\perp_x \left(\int_{B_\rho\left((1,0)\right)}G_s(x-x')\omega_\varepsilon(x')dx'\right)\cdot y dx+o(1)\\
      = &c_s(s-1)\int_{B_\rho\left((1,0)\right)}\int_{B_\rho\left((1,0)\right)}\omega_\varepsilon(x)\omega_\varepsilon(x')\sum_{k=1}^{N-1}\frac{(x-Q_{\frac{2k\pi}{N}}x')^\perp}{|x-Q_{\frac{2k\pi}{N}}x'|^{4-2s}}\cdot(y-Q_{\frac{2k\pi}{N}}y')dx'dx+o(1).
\end{split}
\end{equation}
In view of Lemmas \ref{lem10} and \ref{lem11}, it then follows from \eqref{2-32}, \eqref{2-33} and \eqref{2-34} that
\begin{equation*}
  \alpha_\varepsilon y_2=c_s(s-1)\sum_{k=1}^{N-1}\frac{\left((1,0)-Q_{\frac{2k\pi}{N}}(1,0)\right)^\perp}{|(1,0)-Q_{\frac{2k\pi}{N}}(1,0)|^{4-2s}}\cdot(y-Q_{\frac{2k\pi}{N}}y')+o(1).
\end{equation*}
 Taking $y=(0,-1)=(1,0)^\perp$, the conclusion \eqref{2-31} follows.
\end{proof}

Next, we estimate the other Lagrange multiplier $\mu_\varepsilon$.
\begin{lemma}\label{lem13}Let $1/2\le s<1$. There holds
   \begin{equation*}
      0<\liminf_{\varepsilon\to0^+}\varepsilon^{2-2s}\mu_\varepsilon\le \limsup_{\varepsilon\to0^+}\varepsilon^{2-2s}\mu_\varepsilon<+\infty.
    \end{equation*}
\end{lemma}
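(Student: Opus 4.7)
The plan is to establish the upper and lower bounds on $\mu_\varepsilon$ separately, using the pointwise representation \eqref{2-7} together with the localization provided by Lemma \ref{lem10}.

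For the upper bound $\varepsilon^{2-2s}\mu_\varepsilon \le C$, I would first observe that since $\sup_{\mathbb{R}}f = 1$, equation \eqref{2-7} implies $\omega_\varepsilon \le \varepsilon^{-2}$ on $S$. Combined with $M(\omega_\varepsilon) = 1$, a classical splitting of the Riesz integral $G_s * \omega_\varepsilon$ at scale $R = \varepsilon$ yields $\|G_s*\omega_\varepsilon\|_{L^\infty} \le C\varepsilon^{2s-2}$; the rotated cross-terms appearing in $K_s$ contribute an $O(1)$ error for $x \in S$ because the sectors $Q_{2k\pi/N}S$ are mutually separated. Next, since $M(\omega_\varepsilon) = 1$, there is a Lebesgue point $x^* \in S$ with $\omega_\varepsilon(x^*) > 0$; assumption (A) and \eqref{2-7} then force $\psi_\varepsilon(x^*) > 0$, hence
\begin{equation*}
\mu_\varepsilon < \mathcal{K}_s\omega_\varepsilon(x^*) + \tfrac{\alpha_\varepsilon}{2}|x^*|^2.
\end{equation*}
The right side is bounded by the sup-estimate above, Lemma \ref{lem12} (so $\alpha_\varepsilon$ is bounded), and $|x^*| \le 3/2$, concluding the upper bound.

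For the lower bound I would argue by contradiction: suppose along some subsequence $\varepsilon_n \to 0^+$ one has $\limsup_n \varepsilon_n^{2-2s}\mu_{\varepsilon_n} \le 0$, so that for any prescribed $\eta > 0$, $\mu_{\varepsilon_n} \le \eta \varepsilon_n^{2s-2}$ eventually. Apply Lemma \ref{lem10} with threshold $1/2$ to obtain a fixed $R_0 > 0$ and points $z_\varepsilon \to (1,0)$ with $\int_{B_{R_0\varepsilon}(z_\varepsilon)}\omega_\varepsilon\,dx > 1/2$. For $T > R_0$ define the ring
\begin{equation*}
A_\varepsilon := \{x : R_0\varepsilon < |x - z_\varepsilon| < T\varepsilon\},
\end{equation*}
which has measure $\pi(T^2 - R_0^2)\varepsilon^2$. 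For $x \in A_\varepsilon$ and $y \in B_{R_0\varepsilon}(z_\varepsilon)$, $|x-y| \le (T+R_0)\varepsilon$, so (the cross-terms of $\mathcal{K}_s$ being non-negative)
\begin{equation*}
\mathcal{K}_s\omega_\varepsilon(x) \ge c_s\int_{B_{R_0\varepsilon}(z_\varepsilon)}\frac{\omega_\varepsilon(y)}{|x-y|^{2-2s}}\,dy \ge \frac{c_s/2}{(T+R_0)^{2-2s}\varepsilon^{2-2s}}.
\end{equation*}
I would fix $T^2 = R_0^2 + 1$ (so $|A_\varepsilon| = \pi\varepsilon^2 > \varepsilon^2$) and then choose $\eta \in (0,1)$ so small that $c_s/[2(T+R_0)^{2-2s}] > 2\eta$; this is possible since $T$ and $R_0$ depend only on $s$ and $N$. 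Combined with Lemma \ref{lem12}, this yields $\psi_\varepsilon(x) \ge \eta\,\varepsilon^{2s-2} - C \to +\infty$ uniformly on $A_\varepsilon$. Since $\sup_\mathbb{R}f = 1$ and $f$ is nondecreasing, $f(\psi_\varepsilon) \to 1$ uniformly on $A_\varepsilon$; and since $(1,0)$ is interior to $S$ when $N \ge 2$, $A_\varepsilon \subset S$ for $\varepsilon$ small. Then \eqref{2-7} and $M(\omega_\varepsilon) = 1$ force
\begin{equation*}
\varepsilon^2 = \int_S f(\psi_\varepsilon)\,dx \ge \int_{A_\varepsilon} f(\psi_\varepsilon)\,dx \ge (1-o(1))\pi\varepsilon^2,
\end{equation*}
contradicting $\pi > 1$ as $\varepsilon\to 0^+$.

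The hard part is the lower bound: the trick is to manufacture a ring of area strictly greater than $\varepsilon^2$ on which the concentration of $\omega_\varepsilon$ given by Lemma \ref{lem10} forces $\mathcal{K}_s\omega_\varepsilon$ to be of order $\varepsilon^{2s-2}$, so that $\psi_\varepsilon \to +\infty$ uniformly there. The global constraint $\int_S f(\psi_\varepsilon) = \varepsilon^2$ together with $\sup_\mathbb{R}f = 1$ then forces the ring to have measure at most $\varepsilon^2$, producing the contradiction.
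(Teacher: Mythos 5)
Your proof is correct, and while the upper bound follows essentially the paper's own route, your lower bound is a genuinely different argument. For the upper bound both proofs rest on the sup-norm estimate $\mathcal{K}_s\omega_\varepsilon=O(\varepsilon^{2s-2})$ (the paper gets it by a rearrangement argument, you by splitting the Riesz integral at scale $\varepsilon$ using $\omega_\varepsilon\le\varepsilon^{-2}$, with the rotated cross-terms correctly dismissed as $O(1)$ by the separation of the sectors) together with the fact that $\psi_\varepsilon$ must be positive somewhere on $\text{supp}(\omega_\varepsilon)$; the paper phrases this as ``otherwise $\psi_\varepsilon<0$ on $S$ and $\omega_\varepsilon\equiv0$,'' which is the same observation. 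For the lower bound the two arguments share their ingredients --- the concentration ball $B_{R_0\varepsilon}(z_\varepsilon)$ from Lemma \ref{lem10}, the resulting kernel bound $\mathcal{K}_s\omega_\varepsilon\ge C\varepsilon^{2s-2}$ at distance $O(\varepsilon)$ from $z_\varepsilon$, and the constraint $\int_S f(\psi_\varepsilon)\,dx=\varepsilon^2$ --- but convert the mass constraint into control of $\psi_\varepsilon$ differently. The paper takes $R_0>[f(1)\pi]^{-1/2}$ and pigeonholes a single point $x_\varepsilon\in B_{R_0\varepsilon}(z_\varepsilon)$ with $\psi_\varepsilon(x_\varepsilon)\le 1$ (otherwise that ball alone carries mass $f(1)\pi R_0^2>1$), then reads the bound $\mu_\varepsilon\ge C\varepsilon^{2s-2}-C$ directly from $\psi_\varepsilon(x_\varepsilon)\le 1$ and \eqref{2-25}; this is a direct, non-contradictory deduction. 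You instead run a contradiction on an annulus of area $\pi\varepsilon^2$ just outside the concentration ball, where failure of the lower bound would force $\psi_\varepsilon\to+\infty$ uniformly, hence $f(\psi_\varepsilon)\to\sup_{\mathbb{R}}f=1$, hence $\int_S f(\psi_\varepsilon)\ge(1-o(1))\pi\varepsilon^2>\varepsilon^2$. Your version buys independence from the specific numerical relation between $R_0$ and $f(1)$ (only $\sup_{\mathbb{R}}f=1$ and monotonicity are used), at the cost of an indirect argument and of invoking Lemma \ref{lem12} to control the $\alpha_\varepsilon|x|^2/2$ term --- which is legitimate, since Lemma \ref{lem12} precedes Lemma \ref{lem13}, but the paper's pointwise variant needs only boundedness of $\alpha_\varepsilon$ in the same way. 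Both arguments are sound.
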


\begin{proof}
  By a rearrangement argument, we first have
  \begin{equation*}
    \mathcal{K}_s\omega_\varepsilon(x)=\int_{S}K_s(x,x')\omega_\varepsilon(x')dx'\le \frac{c_s\pi^{1-s}}{s\varepsilon^{2-2s}}+C
  \end{equation*}
  for any $x\in S$. Notice that
  \begin{equation*}
    \psi_\varepsilon(x)=\mathcal{K}_s\omega_{\varepsilon}(x)+\frac{\alpha_{\varepsilon}}{2}|x|^2-\mu_{\varepsilon}\le \frac{c_s\pi^{1-s}}{s\varepsilon^{2-2s}}-\mu_\varepsilon+C.
  \end{equation*}
  Thus we must have
  \begin{equation*}
    \limsup_{\varepsilon\to0^+}\varepsilon^{2-2s}\mu_\varepsilon\le \frac{c_s\pi^{1-s}}{s}.
  \end{equation*}
  Otherwise, $\psi_\varepsilon<0$ on $S$ and hence $\omega_\varepsilon\equiv 0$, which is absurd.
  On the other hand, by Lemma \ref{lem10}, there exists a positive number $R_0>[f(1)\pi]^{-1/2}$ independent of $\varepsilon$, such that
  \begin{equation*}
    \int_{B_{R_0\varepsilon}(z_\varepsilon)}\omega_\varepsilon dx \ge {1}/{2}.
  \end{equation*}
  Thanks to \eqref{2-7}, there exists a point $x_\varepsilon \in B_{R_0\varepsilon}(z_\varepsilon)$ satisfying $\psi(x_\varepsilon)\le 1$. In view of \eqref{2-25}, we obtain
  \begin{equation*}
    1\ge \psi_\varepsilon(x_\varepsilon)\ge \frac{C_s}{\varepsilon^{2-2s}}-\mu_\varepsilon-C,
  \end{equation*}
  for some positive number $C_s$ depending only on $s$. It follows that
  \begin{equation*}
    \liminf_{\varepsilon\to0^+}\varepsilon^{2-2s}\mu_\varepsilon\ge C_s>0,
  \end{equation*}
  which completes the proof.
\end{proof}

\subsection{Sharp estimate and conclusive localization of the vortex support}\label{s6}
\begin{lemma}\label{lem14}Let $1/2\le s<1$.
 There exists a positive number $\Lambda_0>0$, independent of $\varepsilon$, such that
 \begin{equation*}
   \text{supp}(\omega_\varepsilon)\subseteq B_{\Lambda_0 \varepsilon}\left((1,0)\right).
 \end{equation*}
\end{lemma}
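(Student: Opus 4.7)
The plan is to show that $\psi_\varepsilon(x)\le 0$ for every $x\in S$ with $|x-z_\varepsilon|\ge \Lambda\varepsilon$, where $\Lambda$ is independent of $\varepsilon$. Since $\omega_\varepsilon=\varepsilon^{-2}f(\psi_\varepsilon)$ vanishes wherever $\psi_\varepsilon\le 0$, this yields $\text{supp}(\omega_\varepsilon)\subset B_{\Lambda\varepsilon}(z_\varepsilon)$. Combining this with the constraint $L(\omega_\varepsilon)=1$, one computes
\begin{equation*}
1=L(\omega_\varepsilon)=|z_\varepsilon|^2+2z_\varepsilon\cdot\int(x-z_\varepsilon)\omega_\varepsilon(x)\,dx+\int|x-z_\varepsilon|^2\omega_\varepsilon(x)\,dx=|z_\varepsilon|^2+O(\varepsilon),
\end{equation*}
and since $z_\varepsilon$ lies on the positive $x_1$-axis with $z_\varepsilon\to(1,0)$ (Lemma \ref{lem10}), it follows that $|z_\varepsilon-(1,0)|=O(\varepsilon)$; hence $\text{supp}(\omega_\varepsilon)\subset B_{\Lambda_0\varepsilon}((1,0))$ for a suitable $\Lambda_0>0$.

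The core ingredient is a sharp upper bound on $\mathcal{K}_s\omega_\varepsilon(x)$ when $x$ is far from $z_\varepsilon$. Fix $\eta\in(0,1)$ and let $R_\eta$ be given by Lemma \ref{lem10}, so $\int_{S\setminus B_{R_\eta\varepsilon}(z_\varepsilon)}\omega_\varepsilon\,dx\le \eta$; write $\omega_\varepsilon=\omega^{(1)}_\varepsilon+\omega^{(2)}_\varepsilon$ accordingly. Since $K_s(x,y')=G_s(x-y')+\sum_{k=1}^{N-1}G_s(x-Q_{2k\pi/N}y')$ and the rotated sectors $Q_{2k\pi/N}S$ are separated from $S$ by a distance of order $\sin(\pi/(2N))$ when restricted to points of modulus at least $1/2$, the terms with $k\ge 1$ contribute at most a constant $C=C(s,N)$ after integrating against $\omega_\varepsilon$. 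For the $k=0$ term: the contribution of $\omega^{(1)}_\varepsilon$ is bounded by $c_s/(|x-z_\varepsilon|-R_\eta\varepsilon)^{2-2s}$, using $G_s(x-y')\le c_s/(|x-z_\varepsilon|-R_\eta\varepsilon)^{2-2s}$ for $y'\in B_{R_\eta\varepsilon}(z_\varepsilon)$; and the contribution of $\omega^{(2)}_\varepsilon$, which has $L^\infty$-norm at most $\varepsilon^{-2}$ (because $\sup f=1$) and $L^1$-mass at most $\eta$, is controlled via the bathtub principle (Lemma \ref{bath}) by $c_s\pi^{1-s}\eta^s/(s\varepsilon^{2-2s})$, the extremal configuration being the indicator of a ball of radius $\varepsilon\sqrt{\eta/\pi}$ centered at $x$.

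Combining these bounds with Lemma \ref{lem12} ($|\alpha_\varepsilon|\le C$) and Lemma \ref{lem13} ($\mu_\varepsilon\ge c_1/\varepsilon^{2-2s}$ for some $c_1>0$), we get, for $|x-z_\varepsilon|\ge \Lambda\varepsilon$ with $\Lambda\ge 2R_\eta$ and $\varepsilon$ small,
\begin{equation*}
\psi_\varepsilon(x)\le \frac{1}{\varepsilon^{2-2s}}\left[\frac{c_s}{(\Lambda-R_\eta)^{2-2s}}+\frac{c_s\pi^{1-s}\eta^s}{s}-c_1\right]+C.
\end{equation*}
First fix $\eta$ small enough that the bathtub term is at most $c_1/3$, and then $\Lambda$ large enough that the distance term is at most $c_1/3$; the bracket is then $\le -c_1/3$, and $\psi_\varepsilon(x)<0$ for all sufficiently small $\varepsilon$, completing the proof. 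The main delicacy is the order of the two choices: the bathtub bound decays only in $\eta$ (independently of $\Lambda$), so $\eta$ must be fixed first; the whole argument rests on the matching scales $\mu_\varepsilon\sim\varepsilon^{-(2-2s)}$ and $\mathcal{K}_s\omega_\varepsilon\lesssim\varepsilon^{-(2-2s)}$, so that the gain from the large distance can actually defeat the lower bound on $\mu_\varepsilon$.
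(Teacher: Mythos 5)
Your proposal is correct and follows essentially the same route as the paper's proof: split the mass via Lemma \ref{lem10} into a concentrated part near $z_\varepsilon$ and a small remainder, bound the far-field value of $\mathcal{K}_s\omega_\varepsilon$ by the distance term plus the bathtub/rearrangement term $c_s\pi^{1-s}\eta^s/(s\varepsilon^{2-2s})$, and defeat it with the lower bound $\mu_\varepsilon\gtrsim \varepsilon^{-(2-2s)}$ from Lemma \ref{lem13}, fixing $\eta$ before $\Lambda$. Your closing step transferring the center from $z_\varepsilon$ to $(1,0)$ via the constraint $L(\omega_\varepsilon)=1$ is exactly what the paper leaves as "easy to check," so no discrepancy there.
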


\begin{proof}
  Let $\Lambda>1$ be a number to be determined. By Lemma \ref{lem10}, for every $\sigma\in (0,1)$, there exists a positive number $R_\sigma>0$ (independent of $\varepsilon$), such that
  \begin{equation*}
     \int_{B_{R_\sigma\varepsilon}(z_\varepsilon)}\omega_\varepsilon dx> 1-\sigma,\ \ \ \forall\,\varepsilon>0.
  \end{equation*}
  If $y\in S\backslash B_{2\Lambda R_\sigma\varepsilon}(z_\varepsilon)$, then by a rearrangement argument, we have
  \begin{equation}\label{2-35}
  \begin{split}
     \mathcal{K}_s\omega_\varepsilon(y) & \le c_s\int_S \frac{\omega_\varepsilon(x')}{|y-x'|^{2-2s}}dx'+C \\
       & \le c_s\int_{B_{R_\sigma\varepsilon}(z_\varepsilon)} \frac{\omega_\varepsilon(x')}{|y-x'|^{2-2s}}dx'+c_s\int_{S\backslash B_{R_\sigma\varepsilon}(z_\varepsilon)} \frac{\omega_\varepsilon(x')}{|y-x'|^{2-2s}}dx'+C         \\
       & \le \frac{c_s}{(\Lambda R_\sigma)^{2-2s}}\frac{1}{\varepsilon^{2-2s}}+\frac{c_s\pi^{1-s}\sigma^s}{s\varepsilon^{2-2s}}+C\\
       & \le \left(\frac{c_s}{(\Lambda R_\sigma)^{2-2s}}+\frac{c_s\pi^{1-s}\sigma^s}{s}\right)\frac{1}{\varepsilon^{2-2s}}+C.
  \end{split}
  \end{equation}
  On the other hand, by Lemma \ref{lem13}, there exists an $\eta_0>0$, independent of $\varepsilon$, such that
  \begin{equation}\label{2-36}
    \mu_\varepsilon\ge \frac{\eta_0}{\varepsilon^{2-2s}},
  \end{equation}
  provided that $\varepsilon$ is small enough. Now, let $\sigma_0$ be such that
  \begin{equation*}
   {c_s\pi^{1-s}\sigma_0^s}/{s}\le\eta_0/3.
  \end{equation*}
  Set
  \begin{equation*}
    \Lambda=\frac{1}{R_{\sigma_0}}\left(\frac{3c_s}{\eta_0} \right)^\frac{1}{2-2s}.
  \end{equation*}
  Combining \eqref{2-35} and \eqref{2-36}, we deduce that
  \begin{equation*}
    \psi_\varepsilon(y)\le \mathcal{K}_s\omega_\varepsilon(y)-\mu_\varepsilon+C\le -\frac{\eta_0}{3}\frac{1}{\varepsilon^{2-2s}}+C<0,
  \end{equation*}
  for all $y\in S\backslash B_{2\Lambda R_{\sigma_0}\varepsilon}(z_\varepsilon)$ if $\varepsilon$ is small enough. It follows that  $\text{supp}(\omega_\varepsilon)\subseteq B_{2\Lambda R_{\sigma_0} \varepsilon}(z_\varepsilon)$. Let $\Lambda_0=4\Lambda R_{\sigma_0}$. By virtue of the $\theta$-symmetrization of $\omega_\varepsilon$ and the constraint $L(\omega_\varepsilon)=1$, it is easy to check that $\text{supp}(\omega_\varepsilon)\subseteq B_{\Lambda_0 \varepsilon}\left((1,0)\right)$. The proof is thus complete.
\end{proof}

\subsection{Steady solutions in the sense of \eqref{1-9}}\label{s7}
 With Lemma \ref{lem14} in hand, we can now show that $\omega_\varepsilon$ is a steady solution in the sense of \eqref{1-9}. More precisely, we have
\begin{lemma}\label{lem15}Let $1/2\le s<1$. Provided that $\varepsilon$ is sufficiently small, it holds
 \begin{equation*}
   \int_{\mathbb{R}^2}\omega_\varepsilon \nabla^\perp(\mathcal{K}_s\omega_\varepsilon+\frac{\alpha_\varepsilon}{2} |x|^2)\cdot\nabla \varphi  dx=0, \ \ \ \forall\,\varphi\in C_0^\infty(\mathbb{R}^2).
 \end{equation*}
\end{lemma}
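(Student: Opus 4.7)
The strategy is to recognize that, on the support of $\omega_\varepsilon$, the identity $\omega_\varepsilon=f(\psi_\varepsilon)/\varepsilon^2$ from Lemma~\ref{lem4} lets one rewrite $\omega_\varepsilon\nabla^\perp\psi_\varepsilon$ as the symplectic gradient $\nabla^\perp F_\varepsilon(\psi_\varepsilon)$, where $F_\varepsilon(\tau):=F(\tau)/\varepsilon^2$ with $F(\tau):=\int_0^\tau f(t)\,dt$. Integration by parts against $\nabla\varphi$ then yields zero because $\nabla\cdot\nabla^\perp\varphi=\partial_1\partial_2\varphi-\partial_2\partial_1\varphi=0$. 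The crucial enabling input is Lemma~\ref{lem14}, which places $\mathrm{supp}(\omega_\varepsilon)$ inside the tiny ball $B_{\Lambda_0\varepsilon}((1,0))$, comfortably in the interior of $S$ once $\varepsilon$ is small; this prevents any interaction with $\partial S$ or with the mass/angular-momentum constraints defining $\mathcal{A}$.

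The first step is regularity. Since $s\ge 1/2$, standard Riesz-potential estimates give $\mathcal{K}_s\omega_\varepsilon\in\dot{W}^{2s,p}(\mathbb{R}^2)\subset W^{1,p}_{\mathrm{loc}}(\mathbb{R}^2)$ for every $p\in(1,\infty)$, hence $\psi_\varepsilon\in W^{1,p}_{\mathrm{loc}}$. Because $f$ is bounded and nondecreasing with $f(\tau)=0$ for $\tau\le 0$ (assumption (A)), $F$ is a nonnegative Lipschitz function vanishing on $(-\infty,0]$, so the Lipschitz chain rule gives
\begin{equation*}
\nabla\!\bigl(F_\varepsilon(\psi_\varepsilon)\bigr)=\tfrac{1}{\varepsilon^2}f(\psi_\varepsilon)\,\nabla\psi_\varepsilon\quad\text{a.e.\ in }\mathbb{R}^2.
\end{equation*}
Using \eqref{2-7} on the ball $B_{2\rho}((1,0))\subset S$, we have $f(\psi_\varepsilon)/\varepsilon^2=\omega_\varepsilon$ a.e.\ there, while assumption (A) forces $\psi_\varepsilon\le 0$ (hence $F_\varepsilon(\psi_\varepsilon)=0$) on $B_{2\rho}((1,0))\setminus\mathrm{supp}(\omega_\varepsilon)$. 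Setting $g:=F_\varepsilon(\psi_\varepsilon)$ on this ball and extending by zero outside therefore produces $g\in W^{1,p}_{\mathrm{loc}}(\mathbb{R}^2)$ with compact support inside $B_{\Lambda_0\varepsilon}((1,0))$, and the identity $\nabla g=\omega_\varepsilon\nabla\psi_\varepsilon$ holds a.e.\ on $\mathbb{R}^2$.

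It then suffices to observe that $\nabla^\perp\psi_\varepsilon=\nabla^\perp\!\bigl(\mathcal{K}_s\omega_\varepsilon+\tfrac{\alpha_\varepsilon}{2}|x|^2\bigr)$ because the constant $\mu_\varepsilon$ is killed by the gradient, and to integrate by parts: for every $\varphi\in C_0^\infty(\mathbb{R}^2)$,
\begin{equation*}
\int_{\mathbb{R}^2}\omega_\varepsilon\nabla^\perp\!\bigl(\mathcal{K}_s\omega_\varepsilon+\tfrac{\alpha_\varepsilon}{2}|x|^2\bigr)\cdot\nabla\varphi\,dx=\int_{\mathbb{R}^2}\nabla^\perp g\cdot\nabla\varphi\,dx=-\int_{\mathbb{R}^2}g\bigl(\partial_1\partial_2\varphi-\partial_2\partial_1\varphi\bigr)dx=0.
\end{equation*}
The main technical obstacle is the low regularity stemming from a possibly discontinuous $f$; it is resolved by the Lipschitz chain rule in Sobolev spaces, whose applicability is precisely why we require $s\ge 1/2$, ensuring $\psi_\varepsilon\in W^{1,p}_{\mathrm{loc}}$.
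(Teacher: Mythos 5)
Your proposal is correct and follows essentially the same route as the paper: both use Lemma \ref{lem14} to place $\mathrm{supp}(\omega_\varepsilon)$ strictly inside $S$, rewrite $\omega_\varepsilon\nabla^\perp\psi_\varepsilon$ as $\nabla^\perp F_\varepsilon(\psi_\varepsilon)$ with $F_\varepsilon(\tau)=F(\tau)/\varepsilon^2$, and integrate by parts against $\nabla\varphi$ so that the mixed second derivatives of $\varphi$ cancel. The only difference is that you spell out the Lipschitz chain rule and the regularity $\psi_\varepsilon\in W^{1,p}_{\mathrm{loc}}$ that the paper's one-line integration by parts leaves implicit.
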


\begin{proof}
  Let $F_\varepsilon(\tau):=F(\tau)/\varepsilon^2$. It follows from Lemma \ref{lem14} that $F_\varepsilon(\psi_\varepsilon)=0$ on $\partial S$ when $\varepsilon$ is small enough. For any $\varphi\in C_0^\infty(\mathbb{R}^2)$, we can integrate by parts to obtain
\begin{equation*}
  \int_{\mathbb{R}^2}\omega_\varepsilon \nabla^\perp(\mathcal{K}_s\omega_\varepsilon+\frac{\alpha_\varepsilon}{2} |x|^2)\cdot\nabla \varphi dx=-\int_S F_\varepsilon(\psi_\varepsilon)(\partial_{x_2}\partial_{x_1}\varphi-\partial_{x_1}\partial_{x_2}\varphi)dx=0,
\end{equation*}
which completes the proof.
\end{proof}

\subsection{The asymptotic shape of $\omega_\varepsilon$}\label{s8}

Define the (modified) center of $\omega_\varepsilon$ by
\begin{equation*}
  \bar{z}_\varepsilon:=\int_S x\omega_\varepsilon(x)dx.
\end{equation*}
Let us introduce the rescaled version of $\omega_\varepsilon$ as follows:
\begin{equation*}
  \bar{\zeta}_\varepsilon(x)=\varepsilon^2\omega_\varepsilon(\bar{z}_\varepsilon+\varepsilon x),\ \ \ \ x\in \mathbb{R}^2.
\end{equation*}
Then
\begin{equation*}
  0\le \bar{\zeta}_\varepsilon\le 1, \ \ \ \text{supp}(\bar{\zeta}_\varepsilon)\subseteq B_{2\Lambda_0}(0)\ \ \   \text{and}\ \ \ \int_{\mathbb{R}^2}\bar{\zeta}_\varepsilon dx=1.
\end{equation*}
We denote by $\bar{\zeta}^*_\varepsilon$ the symmetric radially nonincreasing Lebesgue-rearrangement of $\bar{\zeta}_\varepsilon$ centered at the origin. The following result determines the asymptotic nature of $\omega_\varepsilon$ in terms of its rescaled version $\bar{\zeta}_\varepsilon$.

\begin{lemma}\label{lem16}
Let $1/2\le s<1$.
Every accumulation point of the family $\{\bar{\zeta}_{\varepsilon}\}_{\varepsilon>0}$ in the weak-star topology of $L^\infty(\mathbb{R}^2)$ must be a radially non-increasing function.
\end{lemma}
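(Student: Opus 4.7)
The plan is to identify every weak-star accumulation point $\bar{\zeta}_0$ of $\{\bar{\zeta}_\varepsilon\}_{\varepsilon>0}$ explicitly as $\textbf{1}_{B_{1/\sqrt{\pi}}(0)}$, from which radial monotonicity is immediate. The driving mechanism is the saturation of the bathtub inequality for the Riesz interaction $\mathcal{I}_s$ in the limit $\varepsilon\to 0^+$. First I would show that $\lim_{\varepsilon\to 0^+}\mathcal{I}_s(\bar\zeta_\varepsilon)=A_s$. Since $\mathcal{I}_s$ is translation-invariant and $\bar\zeta_\varepsilon$ is a translate of $\zeta_\varepsilon$, Lemma~\ref{lem6} gives $\mathcal{I}_s(\bar\zeta_\varepsilon)\ge \delta^{1-s}A_s - C_\delta\varepsilon^{2-2s}$ for every $\delta\in(0,1)$; sending $\varepsilon\to 0^+$ and then $\delta\to 1^-$ yields $\liminf_{\varepsilon\to 0^+}\mathcal{I}_s(\bar\zeta_\varepsilon)\ge A_s$. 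The matching upper bound $\mathcal{I}_s(\bar\zeta_\varepsilon)\le A_s$ is immediate from Lemma~\ref{bath} applied with $\xi_1=\xi_2=\bar\zeta_\varepsilon$ and $\eta_1=\eta_2=1$, since $0\le\bar\zeta_\varepsilon\le 1$ and $\int\bar\zeta_\varepsilon\,dx=1$.

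Second, I would pass to the limit. Along any subsequence $\varepsilon_n\to 0^+$ with $\bar\zeta_{\varepsilon_n}\rightharpoonup^*\bar\zeta_0$ in $L^\infty$, set $u_n(x):=\int\bar\zeta_{\varepsilon_n}(y)|x-y|^{2s-2}\,dy$. The uniform bound $0\le\bar\zeta_{\varepsilon_n}\le 1$, the common support in $B_{2\Lambda_0}(0)$ (from Lemma~\ref{lem14}), and the local integrability of $|x-y|^{2s-2}$ yield pointwise and, by dominated convergence, $L^1_{\mathrm{loc}}$ convergence $u_n\to u_0$. Pairing strong convergence of $u_n$ with weak-star convergence of $\bar\zeta_{\varepsilon_n}$ gives $\mathcal{I}_s(\bar\zeta_{\varepsilon_n})\to\mathcal{I}_s(\bar\zeta_0)$, hence $\mathcal{I}_s(\bar\zeta_0)=A_s$. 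Since $\bar\zeta_0$ lies in $\mathcal{B}_1$ and saturates the bathtub bound, the uniqueness clause of Lemma~\ref{bath} forces $\bar\zeta_0=\textbf{1}_{B_{1/\sqrt{\pi}}(y_0)}$ for some $y_0\in\mathbb{R}^2$. Finally, the identity $\int y\,\bar\zeta_\varepsilon(y)\,dy=0$ holds for every $\varepsilon$, directly from the definition $\bar z_\varepsilon=\int x\,\omega_\varepsilon\,dx$; passing to the weak-star limit against the $L^1$ test $y\,\textbf{1}_{B_{2\Lambda_0}(0)}$ gives $\int y\,\bar\zeta_0(y)\,dy=0$, so $y_0=0$ and $\bar\zeta_0=\textbf{1}_{B_{1/\sqrt{\pi}}(0)}$, which is in particular radially non-increasing.

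The most delicate step I expect is the continuity of $\mathcal{I}_s$ along the weak-star convergent subsequence: the Riesz kernel is singular, and one needs the uniform $L^\infty$ bound together with uniformly bounded support to upgrade weak convergence of $\bar\zeta_{\varepsilon_n}$ into strong $L^1_{\mathrm{loc}}$ convergence of the potential $u_n$. Once this is secured, the rest of the scheme is essentially rigid: the sharp asymptotic $\mathcal{I}_s(\bar\zeta_\varepsilon)\to A_s$ combined with the uniqueness in the bathtub principle leaves no freedom beyond a translation, and the centering built into the definition of $\bar z_\varepsilon$ pins the translation down to $y_0=0$.
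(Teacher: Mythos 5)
Your proposal is correct, but it takes a genuinely different route from the paper. The paper never computes the limiting value of $\mathcal{I}_s(\bar{\zeta}_\varepsilon)$: it compares $\bar{\zeta}_\varepsilon$ with its radial decreasing rearrangement $\bar{\zeta}_\varepsilon^*$, obtaining $\mathcal{I}_s(\bar{\zeta})\le \mathcal{I}_s(\bar{\zeta}^*)$ from the Riesz rearrangement inequality and the reverse inequality from the maximizing property of $\omega_\varepsilon$ (the rearranged competitor $\omega_\varepsilon^*$ is admissible and has the same $\mathcal{J}_\varepsilon$-term), and then invokes the Burchard--Guo compactness-via-symmetrization lemma to conclude that the weak-star limit $\bar{\zeta}$ is a translate of $\bar{\zeta}^*$; the centering $\int y\,\bar{\zeta}\,dy=0$ kills the translation exactly as in your last step. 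Your argument instead saturates the bathtub bound: the lower bound of Lemma \ref{lem6} with $\delta\to 1^-$ and the upper bound $\mathcal{I}_s\le A_s$ from Lemma \ref{bath} give $\mathcal{I}_s(\bar{\zeta}_\varepsilon)\to A_s$ (using translation invariance, $\mathcal{I}_s(\bar{\zeta}_\varepsilon)=\mathcal{I}_s(\zeta_\varepsilon)$), the potential-convergence argument you flag as delicate is indeed the right way to pass $\mathcal{I}_s$ through the weak-star limit (and is the same device the paper uses elsewhere to pass $E_s$ through weak limits), and the uniqueness clause of Lemma \ref{bath} then pins the limit down to $\textbf{1}_{B_{1/\sqrt{\pi}}(y_0)}$. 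What each approach buys: yours gives a strictly stronger conclusion --- the explicit limit profile $\textbf{1}_{B_{1/\sqrt{\pi}}(0)}$, i.e.\ the unit-mass patch, rather than mere radial monotonicity --- at the price of relying on the sharp constant $A_s$ (which is available here precisely because $\sup_{\mathbb{R}}f=1$ and the mass is normalized to $1$) and on the equality case of the Riesz rearrangement inequality hidden in the uniqueness clause of Lemma \ref{bath}. The paper's route via Burchard--Guo is softer: it needs only that $\omega_\varepsilon$ beats its own rearrangement, not that the energy attains the extremal bathtub value, so it would survive modifications of the normalization or of $f$ under which the limit profile is not a patch. Both proofs are complete for the statement as given.
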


\begin{proof}
  Up to a subsequence we may assume that $\bar{\zeta}_\varepsilon\to \bar{\zeta}$ and $\bar{\zeta}^*_\varepsilon\to \bar{\zeta}^*$ weakly-star in $L^\infty(\mathbb{R}^2)$ as $\varepsilon\to 0^+$. By the Riesz rearrangement inequality, we first have
  \begin{equation*}
    c_s\int_{\mathbb{R}^2}\int_{\mathbb{R}^2}\frac{\bar{\zeta}_\varepsilon(x)\bar{\zeta}_\varepsilon(y)}{|x-y|^{2-2s}}dxdy\le  c_s\int_{\mathbb{R}^2}\int_{\mathbb{R}^2}\frac{\bar{\zeta}^*_\varepsilon(x)\bar{\zeta}^*_\varepsilon(y)}{|x-y|^{2-2s}}dxdy.
  \end{equation*}
  Letting $\varepsilon \to 0^+$, we get
  \begin{equation}\label{2-37}
    \mathcal{I}_s(\bar{\zeta})\le \mathcal{I}_s(\bar{\zeta}^*).
  \end{equation}
Let $\bar{\omega}_\varepsilon$ be defined as
\[
{\omega}^*_\varepsilon(x)=\left\{
   \begin{array}{lll}
        \varepsilon^{-2}\bar{\zeta}^*_\varepsilon\big(\varepsilon^{-1}(x-\bar{z}_\varepsilon)\big) &    \text{if} & x\in B_{2\Lambda_0\varepsilon}(\bar{z}_\varepsilon), \\
         0                  &    \text{if} & x\in S\backslash B_{2\Lambda_0\varepsilon}(\bar{z}_\varepsilon).
    \end{array}
   \right.
\]
A direct calculation then yields that as $\varepsilon\to 0^+$,
\begin{equation*}
      \mathcal{E}_\varepsilon({\omega}_\varepsilon)= \frac{\mathcal{I}_s(\bar{\zeta}_\varepsilon)}{2\varepsilon^{2-2s}}-\mathcal{J}_\varepsilon(\bar{\zeta}_\varepsilon)+O(1),
\end{equation*}
and
\begin{equation*}
     \mathcal{E}_\varepsilon({\omega}^*_\varepsilon)= \frac{\mathcal{I}_s(\bar{\zeta}^*_\varepsilon)}{2\varepsilon^{2-2s}}-\mathcal{J}_\varepsilon(\bar{\zeta}^*_\varepsilon)+O(1).
\end{equation*}
Notice that $\mathcal{E}_\varepsilon({\omega}^*_\varepsilon)\le \mathcal{E}_\varepsilon({\omega}_\varepsilon)$ and $\mathcal{J}_\varepsilon(\bar{\zeta}_\varepsilon)=\mathcal{J}_\varepsilon(\bar{\zeta}^*_\varepsilon)$. It follows that  $\mathcal{I}_s(\bar{\zeta}_\varepsilon)\ge \mathcal{I}_s(\bar{\zeta}^*_\varepsilon)+o(1)$ as $\varepsilon \to 0^+$. Letting $\varepsilon \to 0^+$, we get
\begin{equation}\label{2-38}
  \mathcal{I}_s(\bar{\zeta})\ge \mathcal{I}_s(\bar{\zeta}^*).
\end{equation}
Combining \eqref{2-37} and \eqref{2-38}, we conclude that
\begin{equation*}
  \mathcal{I}_s(\bar{\zeta})= \mathcal{I}_s(\bar{\zeta}^*).
\end{equation*}
By Lemma 3.2 in Burchard--Guo \cite{BG}, we know that there exists a translation $\mathcal T$ of $\mathbb{R}^2$ such that $\mathcal T\circ\bar{\zeta}=\bar{\zeta}^*$. Note that
\begin{equation*}
  \int_{\mathbb{R}^2}x\bar{\zeta}(x)dx=\int_{\mathbb{R}^2}x\bar{\zeta}^*(x)dx=0.
\end{equation*}
We must have $\bar{\zeta}=\bar{\zeta}^*$, and the proof is thus complete.
\end{proof}

Now we are ready to prove Theorem \ref{thm1}.

\noindent{\bf Proof of Theorem \ref{thm1}:}
Let
\begin{equation*}
  \omega_{\text{ro},\varepsilon}(x)=\sum_{k=0}^{N-1}\omega_\varepsilon\left(Q_{\frac{2k\pi}{N}}x\right),\ \ \ x\in\mathbb{R}^2.
\end{equation*}
Then the statements of Theorem \ref{thm1} follow from the above lemmas.
\qed

\section{Construction of translating vortex pairs for the gSQG equation}\label{Sec3}
In this section, we provide a variational construction of translating vortex pairs for the gSQG equation. Recall that the kinetic energy of the fluid is given by
\begin{equation*}
  {\text{KE}}_s(\omega):=\frac{1}{2}\int_{\mathbb{R}^2}\int_{\mathbb{R}^2}G_s(x-x')\omega(x)\omega(x')dxdx'.
\end{equation*}
We also introduce the impulse of the fluid as follows:
\begin{equation*}
  \mathcal{P}(\omega)=\int_{\mathbb{R}^2}x_1 \omega(x)dx.
\end{equation*}
For the sake of simplicity, we focus on translating vortex pairs which are symmetric about the $x_2$-axis. More precisely, we first assume
\begin{equation*}
  \omega(x_1,x_2)=-\omega(-x_1,x_2).
\end{equation*}
Let
\begin{equation*}
  \bar{G}_s(x,x')=G_s(x-x')-G_s(\bar{x}-x'),\ \ \text{with}\ \ \bar{x}:=(-x_1, x_2).
\end{equation*}
With the symmetry assumption in hand, we have
\begin{equation*}
  {\text{KE}}_s(\omega)=2\left(\frac{1}{2}\int_{\mathbb{R}^2_+}\int_{\mathbb{R}^2_+}\bar{G}_s(x,x')\omega(x)\omega(x')dxdx'\right)\ \ \ \text{and}\ \ \  \mathcal{P}(\omega)=2\int_{\mathbb{R}^2_+}x_1 \omega(x)dx.
\end{equation*}
Let $W$ be a positive number. Based on this fact, we shall restrict the construction to only one vortex on $\mathbb{R}^2_+$. Set
\begin{equation*}
  \mathcal{E}_{\varepsilon,\text{tr}}(\omega)=\frac{1}{2}\int_{\mathbb{R}^2_+}\int_{\mathbb{R}^2_+}\bar{G}_s(x,x')\omega(x)\omega(x')dxdx'-W\int_{\mathbb{R}^2_+}x_1 \omega(x)dx-\frac{1}{\varepsilon^2}\int_{\mathbb{R}^2_+} J(\varepsilon^2\omega(x))dx.
\end{equation*}
Let
\begin{equation*}
			d=\left(\frac{1}{4\pi W}\frac{\Gamma(2-s)}{\Gamma(s)}\right)^{\frac{1}{3-2s}}, \ b_1=d\mathbf{e}_1,\ b_2=-d\mathbf{e}_1,\ \mathbf{e}_1=(1,0).
		\end{equation*}
 We shall consider the class of admissible function as follows:
\begin{equation*}
  \mathcal{A}_{\text{tr}}:=\Big{\{}\omega\in L^{1+\frac{1}{s}}(\mathbb{R}^2): \omega\ge0,\ \text{supp}(\omega)\subseteq \overline{B_{d/2}(b_1)},\ M(\omega)=1 \Big{\}}.
\end{equation*}
Consider the maximization problems
\begin{equation*}
  \mathcal{C}'_{\varepsilon}:=\sup\Big{\{} \mathcal{E}_{\varepsilon,\text{tr}}(\omega): \omega\in \mathcal{A}_{\text{tr}} \Big{\}}.
\end{equation*}
For a given nonnegative function $\omega$, we shall say that $\omega$ is Steiner symmetric with respect to $x_2$ if
\begin{equation*}
	\omega(x_1,x_2)=\omega(x_1,-x_2) \ \ \ \text{and}\ \ \ \omega \ \text{is non-increasing in}\ x_2\  \text{for}\,x_2>0.	
\end{equation*}
One can check that $\mathcal{E}_{\varepsilon,\text{tr}}$ is increased by the Steiner symmetrization with respect to $x_2$. Let
\begin{equation*}
  \bar{\mathcal{K}}_s\omega=\int_{\mathbb{R}^2_+}\bar{G}_s(x,x')\omega(x')dx'.
\end{equation*}
Using the similar argument as in the proof of Lemma \ref{lem4}, we can obtain the following result.

\begin{lemma}\label{lem17}
Let $0<s<1$ and $0<\varepsilon<\sqrt{\pi}d/2$. Then there exists an $\omega_{\varepsilon,tr}\in \mathcal{A}_{tr}$, which is Steiner symmetric with respect to $x_2$, such that
\begin{equation*}
  \mathcal{E}_{\varepsilon,tr}(\omega_{\varepsilon,tr})=\sup_{\omega\in \mathcal{A}_{tr}}\mathcal{E}_{\varepsilon,tr}(\omega)<+\infty.
\end{equation*}
Moreover, there exist a number $\mu_{\varepsilon,tr}$ such that
\begin{equation}\label{3-1}
		\omega_{\varepsilon,tr}(x)=\frac{1}{\varepsilon^2} f(\psi_{\varepsilon,tr}(x)),\ \ \text{a.e.}\ x\in S,
\end{equation}
with
\begin{equation*}
  \psi_{\varepsilon,tr}(x)=\bar{\mathcal{K}}_s\omega_{\varepsilon,tr}(x)-Wx_1-\mu_{\varepsilon,tr}.
\end{equation*}
\end{lemma}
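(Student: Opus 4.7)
The plan is to replicate the penalization scheme developed in Sections~\ref{s1}--\ref{s3} for the rotating case, with two structural simplifications. First, the admissible set $\mathcal{A}_{tr}$ is defined by the single scalar constraint $M(\omega)=1$ (plus the fixed support condition $\mathrm{supp}(\omega)\subseteq \overline{B_{d/2}(b_1)}$), so only one Lagrange multiplier $\mu_{\varepsilon,tr}$ will appear. Second, the natural symmetrization is Steiner rearrangement with respect to $x_2$ rather than $\theta$, and one must verify that it increases the modified kinetic energy.

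I would begin by introducing the penalized functional
\begin{equation*}
  \mathcal{E}_{\varepsilon,\lambda,tr}(\omega)=\tfrac{1}{2}\!\iint_{\mathbb{R}^2_+\times\mathbb{R}^2_+}\!\!\bar{G}_s(x,x')\omega(x)\omega(x')\,dxdx'-W\!\int_{\mathbb{R}^2_+}\!x_1\omega-\tfrac{1}{\varepsilon^2}\!\int_{\mathbb{R}^2_+}\!J_\lambda(\varepsilon^2\omega),
\end{equation*}
with $f_\lambda(\tau)=f(\tau)+\lambda\tau_+^s$ and $J_\lambda$ its convex conjugate. The coercive penalty yields a priori bounds in $L^{1+1/s}$ exactly as in \eqref{2-4}. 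Since $\bar{G}_s\in L^{1+s}(\overline{B_{d/2}(b_1)}\times\overline{B_{d/2}(b_1)})$, the quadratic energy is weakly continuous along $L^{1+1/s}$-bounded sequences, and a standard maximizing-sequence argument (using the linear impulse $-W\!\int x_1\omega$, which is weakly continuous) produces $\omega_{\varepsilon,\lambda,tr}\in\mathcal{A}_{tr}$. To upgrade to Steiner symmetry in $x_2$, I would show that for fixed $x_1,x_1'>0$ the reduced kernel
\begin{equation*}
 t\longmapsto G_s\!\left(\sqrt{(x_1-x_1')^2+t^2}\right)-G_s\!\left(\sqrt{(x_1+x_1')^2+t^2}\right)
\end{equation*}
is even, nonnegative and strictly decreasing in $|t|$ (a direct differentiation). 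A slice-wise application of the one-dimensional Riesz rearrangement inequality in $x_2$ then shows that the kinetic term increases under Steiner symmetrization in $x_2$, while the impulse and the penalty are invariant; this is the analog of Lemma~\ref{A2} invoked in Lemma~\ref{lem2-1}.

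The Euler--Lagrange derivation mirrors Lemma~\ref{lem2-2} but is simpler: since only $M$ is constrained, I would test with $\omega_\tau=\omega_{\varepsilon,\lambda,tr}+\tau[\zeta-M(\zeta)\phi_1]$, where $\phi_1$ is an $L^\infty$ bump of unit mass supported in $\{\omega_{\varepsilon,\lambda,tr}\ge\delta\}$, obtaining a unique $\mu_{\varepsilon,\lambda,tr}=\mathcal{E}_{\varepsilon,\lambda,tr}'(\omega_{\varepsilon,\lambda,tr})\phi_1$ such that $\omega_{\varepsilon,\lambda,tr}=\varepsilon^{-2}f_\lambda(\psi_{\varepsilon,\lambda,tr})$ a.e.\ on $\overline{B_{d/2}(b_1)}$, with $\psi_{\varepsilon,\lambda,tr}=\bar{\mathcal{K}}_s\omega_{\varepsilon,\lambda,tr}-Wx_1-\mu_{\varepsilon,\lambda,tr}$.

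The uniform bound on $\mu_{\varepsilon,\lambda,tr}$ as $\lambda\to 0^+$ is in fact easier than in Lemma~\ref{lem3}, because the support constraint is baked into $\mathcal{A}_{tr}$. Both $\bar{\mathcal{K}}_s\omega_{\varepsilon,\lambda,tr}$ and $Wx_1$ are uniformly bounded on $\overline{B_{d/2}(b_1)}$ (the former via the $L^{1+1/s}$ bound and standard Riesz potential estimates). If $\mu_{\varepsilon,\lambda,tr}\to+\infty$ then $\psi_{\varepsilon,\lambda,tr}<0$ throughout $\overline{B_{d/2}(b_1)}$ and $\omega_{\varepsilon,\lambda,tr}\equiv 0$, contradicting $M=1$; conversely, if $\mu_{\varepsilon,\lambda,tr}\to-\infty$ then $f_\lambda(\psi_{\varepsilon,\lambda,tr})\ge\lambda|\mu_{\varepsilon,\lambda,tr}|^s/2$ on the whole ball, forcing $M\to\infty$, again a contradiction. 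With $|\mu_{\varepsilon,\lambda,tr}|$ bounded, $\|\psi_{\varepsilon,\lambda,tr}\|_\infty$ is bounded, and since $f_\lambda(\tau)\le 1+\lambda\tau_+^s$ we also control $\|\omega_{\varepsilon,\lambda,tr}\|_\infty$ uniformly. The passage to the limit $\lambda\to 0^+$ is then identical to Lemma~\ref{lem4}: weak-$*$ compactness of $\omega_{\varepsilon,\lambda,tr}$ in $L^\infty$, $W^{2s,p}_{\mathrm{loc}}$ compactness of $\psi_{\varepsilon,\lambda,tr}$, and the fact that $\psi_{\varepsilon,tr}$ is strictly monotone in $|x_2|$ (hence has level sets of measure zero) allow one to pass to the limit pointwise in \eqref{3-1} at continuity points of $f$, and maximality is confirmed via the dual identity $J_\lambda(\varepsilon^2\omega_{\varepsilon,\lambda,tr})=\varepsilon^2\omega_{\varepsilon,\lambda,tr}\psi_{\varepsilon,\lambda,tr}-F_\lambda(\psi_{\varepsilon,\lambda,tr})$ as in \eqref{2-11}--\eqref{2-12}. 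The only genuinely new step compared with the rotating case is the kernel monotonicity computation for $\bar{G}_s$, which I expect to be the main, though mild, technical obstacle.
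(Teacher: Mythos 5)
Your route is the same as the paper's: the paper proves Lemma \ref{lem17} only by the remark ``using the similar argument as in the proof of Lemma \ref{lem4}'', and your proposal is a faithful expansion of exactly that penalization--symmetrization--Euler--Lagrange--limit scheme, including the two genuinely new ingredients (the single multiplier, and the slice-wise Riesz rearrangement in $x_2$ based on the monotonicity of $t\mapsto G_s\bigl(\sqrt{(x_1-x_1')^2+t^2}\bigr)-G_s\bigl(\sqrt{(x_1+x_1')^2+t^2}\bigr)$, which is the correct analogue of Lemma \ref{A2}).

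One step is not justified as written: to exclude $\mu_{\varepsilon,\lambda,tr}\to-\infty$ you invoke $f_\lambda(\psi_{\varepsilon,\lambda,tr})\ge\lambda|\mu_{\varepsilon,\lambda,tr}|^s/2$ and claim this forces $M\to\infty$; but the relevant limit is $\lambda\to0^+$ with $\varepsilon$ fixed, and $\lambda|\mu_{\varepsilon,\lambda,tr}|^s$ need not diverge along such a sequence, so this inequality alone gives no contradiction. The repair is to drop the penalty term and use $f$ itself: if $\mu_{\varepsilon,\lambda,tr}\to-\infty$ then $\psi_{\varepsilon,\lambda,tr}\to+\infty$ uniformly on $\overline{B_{d/2}(b_1)}$, so $f(\psi_{\varepsilon,\lambda,tr})\to\sup_{\mathbb{R}}f=1$ (monotone bounded $f$), whence $M(\omega_{\varepsilon,\lambda,tr})\ge(1-o(1))\,\pi d^2/(4\varepsilon^2)>1$ precisely because of the hypothesis $\varepsilon<\sqrt{\pi}d/2$ --- which is evidently where that hypothesis is used. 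With this one-line fix the argument is complete.
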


The maximizing property of $\omega_{\varepsilon,tr}$ will force its support to shrinks to some point in $\overline{B_{d/2}(b_1)}$ as $\varepsilon \to 0$. More precisely, we have
\begin{lemma}\label{lem18}Let $0<s<1$. Then there exists a family of points $\{z_{\varepsilon,tr}\}_{\varepsilon>0}$ on the $x_1$-axis and a positive number $R_0$, such that
  \begin{equation}\label{3-2}
  \text{supp}(\omega_{\varepsilon,tr})\subseteq B_{R_0\varepsilon}(z_{\varepsilon, tr}),
  \end{equation}
  for any $0<\varepsilon<\sqrt{\pi}d/2$. Moreover, $z_{\varepsilon,tr}\to b_1 $ as $\varepsilon \to 0$.
\end{lemma}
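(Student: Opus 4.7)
The plan is to adapt the argument of subsections \ref{s4}--\ref{s6} from the rotating case. The rotating constraint $L(\omega)=1$ pinned the limiting concentration point to $(1,0)$; here, in the absence of such a constraint, the role of pinning is played by a direct comparison of the energy expansion with a test function centered at $b_1=d\mathbf{e}_1$, which is the unique maximizer in $\overline{B_{d/2}(b_1)}$ of the reduced energy $z_1\mapsto -c_s/(2(2z_1)^{2-2s})-Wz_1$ (this is the defining relation for $d$).

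Set $\zeta_{\varepsilon,tr}(x):=\varepsilon^{2}\omega_{\varepsilon,tr}(\varepsilon x)$. Testing $\mathcal{E}_{\varepsilon,tr}$ against the admissible disc $\varpi^{\delta}_{\varepsilon}=(\delta/\varepsilon^{2})\mathbf{1}_{B_{\varepsilon/\sqrt{\pi\delta}}(b_1)}$ for $\delta\in(0,1)$, I obtain the analog of Lemmas \ref{lem5}--\ref{lem6}:
\[
\mathcal{I}_s(\zeta_{\varepsilon,tr})\;\ge\;\delta^{1-s}A_s-C_\delta\varepsilon^{2-2s}.
\]
Applying the concentration--compactness principle of Lemma \ref{lem8} to $\{\zeta_{\varepsilon,tr}\}$, vanishing is excluded exactly as in Lemma \ref{lem7} via this bound, and dichotomy is excluded by the bathtub principle (Lemma \ref{bath}) together with the strict subadditivity $\beta^{1+s}+(1-\beta)^{1+s}<1$ for $\beta\in(0,1)$. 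Using the Steiner symmetry of $\omega_{\varepsilon,tr}$ with respect to $x_2$ and imitating Lemma \ref{lem9}, I obtain a family $\{z_{\varepsilon,tr}\}\subset\mathbb{R}\times\{0\}$ and, for every $\eta>0$, a radius $R_\eta$ such that $\int_{B_{R_\eta\varepsilon}(z_{\varepsilon,tr})}\omega_{\varepsilon,tr}\,dx>1-\eta$.

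Since $\{z_{\varepsilon,tr}\}$ is confined to the compact segment $[d/2,3d/2]\times\{0\}$, I extract a subsequence $z_{\varepsilon,tr}\to z^{*}=(z_1^{*},0)$. Setting $\bar\zeta_{\varepsilon,tr}(y):=\varepsilon^{2}\omega_{\varepsilon,tr}(z_{\varepsilon,tr}+\varepsilon y)$ and expanding the mirror kernel in $\bar G_s$ and the linear impulse,
\[
\mathcal{E}_{\varepsilon,tr}(\omega_{\varepsilon,tr})=\frac{\mathcal{I}_s(\bar\zeta_{\varepsilon,tr})}{2\varepsilon^{2-2s}}-\frac{c_s}{2(2z_1^{*})^{2-2s}}-Wz_1^{*}-\int_{\mathbb{R}^{2}}J(\bar\zeta_{\varepsilon,tr})\,dy+o(1).
\]
Comparing with the corresponding expansion for $\varpi^{\delta}_{\varepsilon}$ centered at $b_1$, letting $\delta\uparrow 1$ so that $\mathcal{I}_s$ of the test function approaches $A_s$ (which by the bathtub principle is also the maximum possible value of $\mathcal{I}_s(\bar\zeta_{\varepsilon,tr})$), and using the strict concavity of $z_1\mapsto -c_s/(2(2z_1)^{2-2s})-Wz_1$ with unique maximum at $z_1=d$ on $(0,\infty)$, I conclude $z_1^{*}=d$, i.e.\ $z_{\varepsilon,tr}\to b_1$.

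Finally, the tight bound $\text{supp}(\omega_{\varepsilon,tr})\subseteq B_{R_0\varepsilon}(z_{\varepsilon,tr})$ follows by transposing the Lagrange-multiplier argument of Lemmas \ref{lem13}--\ref{lem14}: a uniform estimate $\mu_{\varepsilon,tr}\asymp\varepsilon^{2s-2}$ from the mass constraint and \eqref{3-1}, a rearrangement estimate $\bar{\mathcal{K}}_s\omega_{\varepsilon,tr}(y)\le \eta_0/(3\varepsilon^{2-2s})+C$ outside $B_{2\Lambda R_\sigma\varepsilon}(z_{\varepsilon,tr})$ (using $\bar G_s\le G_s$), and the bounded linear term $-Wy_1$ together force $\psi_{\varepsilon,tr}<0$ there, so that $\omega_{\varepsilon,tr}=\varepsilon^{-2}f(\psi_{\varepsilon,tr})$ vanishes. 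The main obstacle I anticipate is the identification step above: quantifying the $O(1)$ mirror-image and linear remainders sharply enough, and controlling the tails of $\omega_{\varepsilon,tr}$, so that the finite-order $z_1^{*}$-dependence in the expansion dominates and extracts $z_1^{*}=d$ as the unique limit.
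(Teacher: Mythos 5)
Your outline of the $B_{R_0\varepsilon}$ localization (concentration--compactness to rule out vanishing and dichotomy, Steiner symmetry in $x_2$ to place $z_{\varepsilon,tr}$ on the $x_1$-axis, then the transposition of Lemmas \ref{lem13}--\ref{lem14}) is exactly the route the paper intends; it omits that part as ``quite similar to Lemma \ref{lem14}''. The genuine gap is in the identification $z_{\varepsilon,tr}\to b_1$, and it is precisely the obstacle you flag at the end. Comparing $\mathcal{E}_{\varepsilon,tr}(\omega_{\varepsilon,tr})$ with the disc test function $\varpi^{\delta}_{\varepsilon}$ centered at $b_1$ cannot isolate the $O(1)$ terms $-\tfrac{c_s}{2}(2z_1^{*})^{2s-2}-Wz_1^{*}$: before those terms become visible, the leading self-energy contributions $\mathcal{I}_s(\bar\zeta_{\varepsilon,tr})/(2\varepsilon^{2-2s})$ and $\delta^{1-s}A_s/(2\varepsilon^{2-2s})$ on the two sides of the inequality would have to agree up to $o(1)$, not merely up to $o(\varepsilon^{-(2-2s)})$. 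There is no reason for that: the limiting rescaled profile is determined by the trade-off between $\mathcal{I}_s$ and the penalization $\mathcal{J}_\varepsilon$, is in general not the indicator of a disc (unless $f$ is a step function), and typically has $\mathcal{I}_s$ strictly below $A_s$; moreover, for fixed $\delta<1$ the deficit $(1-\delta^{1-s})A_s/(2\varepsilon^{2-2s})$ already diverges, while sending $\delta=\delta(\varepsilon)\uparrow 1$ runs into the possible divergence of $J(\delta)$ as $\delta\uparrow\sup f$. As written, the comparison therefore yields no constraint on $z_1^{*}$.

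The paper's device sidesteps all of this: take as competitor the translate of the maximizer itself, $\tilde\omega_{\varepsilon,tr}=\omega_{\varepsilon,tr}(z_{\varepsilon,tr}-b_1+\cdot)\in\mathcal{A}_{tr}$. The self-interaction $\tfrac12\iint G_s(x-x')\omega\omega$ and the penalization $\mathcal{J}_\varepsilon$ are exactly invariant under this translation, so they cancel identically in $\mathcal{E}_{\varepsilon,tr}(\omega_{\varepsilon,tr})\ge\mathcal{E}_{\varepsilon,tr}(\tilde\omega_{\varepsilon,tr})$, leaving only the mirror-image term and the impulse, namely $\tfrac12\iint G_s(\bar x-x')\omega\omega+W\int x_1\omega\le\bigl(\text{the same for }\tilde\omega\bigr)$. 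Passing to the limit with the $O(\varepsilon)$ support bound gives $G_s(2r_*)/2+Wr_*\le G_s(2d)/2+Wd$, and since $d$ is the unique minimizer of $\tau\mapsto G_s(2\tau)/2+W\tau$ on $(0,\infty)$, one gets $r_*=d$. Replace your identification step with this comparison; the rest of your argument then goes through.
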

\begin{proof}
The proof of \eqref{3-2} is quite similar to that given earlier for Lemma \ref{lem14} and so is omitted. Now, let us prove $z_{\varepsilon,tr}\to b_1 $ as $\varepsilon \to 0$. Up to extracting a subsequence, we assume that $z_{\varepsilon,tr}\to (r_*,0)$ for some $r_*>0$ as $\varepsilon \to 0$. We claim $r_*=d$. Indeed, let
\begin{equation*}
  \tilde{\omega}_{\varepsilon,tr}=\omega_{\varepsilon,tr}(z_{\varepsilon,tr}-b_1+\cdot)\in \mathcal{A}_{\text{tr}}.
\end{equation*}
We derive from $\mathcal{E}_{\varepsilon,tr}(\omega_{\varepsilon,tr})\ge \mathcal{E}_{\varepsilon,tr}(\tilde{\omega}_{\varepsilon,tr})$ that
\begin{equation*}
\begin{split}
   \frac{1}{2}\int_{\mathbb{R}^2_+}\int_{\mathbb{R}^2_+}{G}_s(\bar{x}-x')&\omega_{\varepsilon,tr}(x) \omega_{\varepsilon,tr}(x')dxdx'+W\int_{\mathbb{R}^2_+}x_1 \omega_{\varepsilon,tr}(x)dx    \\
     &  \le\frac{1}{2}\int_{\mathbb{R}^2_+}\int_{\mathbb{R}^2_+}{G}_s(\bar{x}-x')\tilde{\omega}_{\varepsilon,tr}(x)\tilde{\omega}_{\varepsilon,tr}(x')dxdx'+W\int_{\mathbb{R}^2_+}x_1 \tilde{\omega}_{\varepsilon,tr}(x)dx.
\end{split}
\end{equation*}
Let $\varepsilon \to 0$ in the above inequality, we get
\begin{equation*}
  {G}_s(2r_*)/2+Wr_*\le {G}_s(2d)/2+Wd.
\end{equation*}
Since $d$ is the unique minimizer of the function ${G}_s(2\tau)/2+W\tau$ for $\tau>0$, we must have $r_*=d$. The proof is thus complete.
\end{proof}

\begin{lemma}\label{lem19}Let $1/2\le s<1$. Provided that $\varepsilon$ is sufficiently small, it holds
 \begin{equation*}
   \int_{\mathbb{R}^2}\omega_{\varepsilon,tr} \nabla^\perp(\bar{\mathcal{K}}_s\omega_{\varepsilon,tr}-Wx_1)\cdot\nabla \varphi  dx=0, \ \ \ \forall\,\varphi\in C_0^\infty(\mathbb{R}^2).
 \end{equation*}
\end{lemma}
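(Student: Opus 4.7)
The plan is to mirror the argument of Lemma \ref{lem15} in the travelling-wave setting. The two essential inputs are the Euler--Lagrange identity of Lemma \ref{lem17} and the sharp support localization of Lemma \ref{lem18}, and the underlying structural fact is that in two dimensions any $\nabla^\perp$ of a scalar function is automatically divergence-free.

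First, since $\mu_{\varepsilon,tr}$ is a constant we have $\nabla \psi_{\varepsilon,tr} = \nabla(\bar{\mathcal{K}}_s \omega_{\varepsilon,tr} - W x_1)$. Setting $F_\varepsilon(\tau) := F(\tau)/\varepsilon^2$ and applying Lemma \ref{lem17} (which gives $\omega_{\varepsilon,tr} = \varepsilon^{-2} f(\psi_{\varepsilon,tr})$ on $\overline{B_{d/2}(b_1)}$) together with $F' = f$, the chain rule rewrites the integrand as
\begin{equation*}
\omega_{\varepsilon,tr}\, \nabla^\perp(\bar{\mathcal{K}}_s \omega_{\varepsilon,tr} - W x_1) \;=\; \omega_{\varepsilon,tr}\, \nabla^\perp \psi_{\varepsilon,tr} \;=\; \nabla^\perp F_\varepsilon(\psi_{\varepsilon,tr}).
\end{equation*}

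Next, I would verify that $F_\varepsilon(\psi_{\varepsilon,tr})$ (extended by zero outside $\overline{B_{d/2}(b_1)}$) is compactly supported in $\mathbb{R}^2$ when $\varepsilon$ is small. This is the exact analogue of the condition ``$F_\varepsilon(\psi_\varepsilon) = 0$ on $\partial S$'' used in Lemma \ref{lem15}. Indeed, assumption (A) gives $f \equiv 0$ on $(-\infty, 0]$, hence $F \equiv 0$ there; and from Lemma \ref{lem17} the set $\{\omega_{\varepsilon,tr} > 0\}$ coincides (up to null sets) with $\{\psi_{\varepsilon,tr} > 0\}$ inside $\overline{B_{d/2}(b_1)}$, so $F_\varepsilon(\psi_{\varepsilon,tr}) \equiv 0$ on $\overline{B_{d/2}(b_1)} \setminus \mathrm{supp}(\omega_{\varepsilon,tr})$. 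By Lemma \ref{lem18}, $\mathrm{supp}(\omega_{\varepsilon,tr}) \subseteq B_{R_0 \varepsilon}(z_{\varepsilon,tr})$ with $z_{\varepsilon,tr} \to b_1$, so for small $\varepsilon$ the support lies strictly inside $B_{d/2}(b_1)$ and the extension by zero is continuous and compactly supported.

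Finally, for any $\varphi \in C_0^\infty(\mathbb{R}^2)$ I would combine the previous two steps and integrate by parts:
\begin{equation*}
\int_{\mathbb{R}^2} \omega_{\varepsilon,tr}\, \nabla^\perp(\bar{\mathcal{K}}_s \omega_{\varepsilon,tr} - W x_1) \cdot \nabla \varphi \, dx = \int_{\mathbb{R}^2} \nabla^\perp F_\varepsilon(\psi_{\varepsilon,tr}) \cdot \nabla \varphi \, dx = -\int_{\mathbb{R}^2} F_\varepsilon(\psi_{\varepsilon,tr}) \bigl(\partial_{x_2}\partial_{x_1}\varphi - \partial_{x_1}\partial_{x_2}\varphi\bigr)\, dx = 0,
\end{equation*}
where the boundary terms in the integration by parts vanish thanks to the compact support established above, and the final equality uses the symmetry of mixed partials of $\varphi$. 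The only mild technicality, and the point I would expect to need the most care, is justifying the chain rule for $F \circ \psi_{\varepsilon,tr}$ in a Sobolev sense; this is routine, since for $s \ge 1/2$ the Riesz potential $\bar{\mathcal{K}}_s \omega_{\varepsilon,tr}$ lies in $W^{2s,p}_{\mathrm{loc}}$ for all finite $p$, while the boundedness of $f$ makes $F$ globally Lipschitz, so $F_\varepsilon(\psi_{\varepsilon,tr})$ is itself a compactly supported Lipschitz function and the identity $\nabla^\perp F_\varepsilon(\psi_{\varepsilon,tr}) = \omega_{\varepsilon,tr}\nabla^\perp \psi_{\varepsilon,tr}$ holds almost everywhere.
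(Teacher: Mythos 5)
Your proposal is correct and follows essentially the same route as the paper: rewrite $\omega_{\varepsilon,tr}\nabla^\perp\psi_{\varepsilon,tr}$ as $\nabla^\perp F_\varepsilon(\psi_{\varepsilon,tr})$ via the Euler--Lagrange relation of Lemma \ref{lem17}, use the support localization of Lemma \ref{lem18} to see that $F_\varepsilon(\psi_{\varepsilon,tr})$ vanishes near $\partial B_{d/2}(b_1)$ for small $\varepsilon$, and integrate by parts twice so the mixed partials of $\varphi$ cancel. The extra care you take with the chain rule and the identification $\{\omega_{\varepsilon,tr}>0\}=\{\psi_{\varepsilon,tr}>0\}$ (from assumption (A)) only makes explicit what the paper leaves implicit.
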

\begin{proof}
  Let $F_\varepsilon(\tau):=F(\tau)/\varepsilon^2$. It follows from Lemma \ref{lem18} that $F_\varepsilon(\psi_\varepsilon)=0$ on $\partial B_{d/2}(b_1)$ when $\varepsilon$ is small enough. For any $\varphi\in C_0^\infty(\mathbb{R}^2)$, we can integrate by parts to obtain
\begin{equation*}
  \int_{\mathbb{R}^2}\omega_{\varepsilon,tr} \nabla^\perp(\bar{\mathcal{K}}_s\omega_{\varepsilon,tr}-Wx_1)\cdot\nabla \varphi  dx=-\int_S F_\varepsilon(\psi_{\varepsilon,tr})(\partial_{x_2}\partial_{x_1}\varphi-\partial_{x_1}\partial_{x_2}\varphi)dx=0,
\end{equation*}
which completes the proof.
\end{proof}

The remainder of the proof is analogous to that in the previous section and so is omitted.

\section{Auxiliary results}

In this appendix, we collect some auxiliary results, which we have been used in the preceding sections. Recall that
\begin{equation*}
  \mathcal{U}_N=\Big\{(r\cos\theta,r\sin\theta)\in \mathbb{R}^2:-\frac{\pi}{N}<\theta<\frac{\pi}{N} \Big\}.
\end{equation*}
Consider a non-negative measurable function $\omega$ defined on $\mathcal{U}_N$. We define its angular Steiner symmetrization $\omega^\sharp$ to be the unique rearrangement of $\omega$ which is Steiner symmetric with respect to $\theta$. In other words, $\omega^\sharp$ is the unique even function for the variable $\theta$ such that
\begin{equation*}
  \omega^\sharp(r,\theta)>\tau\ \ \ \text{if and only if}\ \ \ |\theta|<\frac{1}{2}\,\text{meas}\left\{\theta'\in (-\frac{\pi}{N},\frac{\pi}{N}):\omega(r,\theta')>\tau \right\},
\end{equation*}
for any positive numbers $r$ and $\tau$, and any $-{\pi}/{N}<\theta<{\pi}/{N}$. Using the layer-cake representation of nonnegative measurable functions (see \cite{Lie}), we have
\begin{lemma}\label{A1}
  Let non-negative $\omega\in L^1_{\text{loc}}(\mathcal{U}_N)$ and let $g:\mathbb{R}_+\to \mathbb{R}_+$ be continuous. Then we have
  \begin{equation*}
    \int_{\mathcal{U}_N}g(r)\omega(r,\theta)rdrd\theta=\int_{\mathcal{U}_N}g(r)\omega^\sharp(r,\theta)rdrd\theta,
  \end{equation*}
  provided that these quantities are finite.
\end{lemma}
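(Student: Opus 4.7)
\textbf{Proof proposal for Lemma \ref{A1}.} The plan is to reduce the statement to the single fact that, for each fixed $r>0$, the slice $\omega(r,\cdot)$ and its angular symmetrization $\omega^\sharp(r,\cdot)$ are equimeasurable functions on the interval $(-\pi/N,\pi/N)$. Once this is established, integration against the weight $g(r)\,r$, which does not depend on $\theta$, propagates the identity from each slice to the full integral via Fubini's theorem.

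First I would pass to polar coordinates and apply Fubini to write
\begin{equation*}
  \int_{\mathcal{U}_N} g(r)\omega(r,\theta)\,r\,dr\,d\theta = \int_0^{+\infty} g(r)\,r\left(\int_{-\pi/N}^{\pi/N}\omega(r,\theta)\,d\theta\right) dr,
\end{equation*}
and similarly for $\omega^\sharp$. It therefore suffices to show that for almost every $r>0$,
\begin{equation*}
  \int_{-\pi/N}^{\pi/N}\omega(r,\theta)\,d\theta = \int_{-\pi/N}^{\pi/N}\omega^\sharp(r,\theta)\,d\theta.
\end{equation*}
The local integrability of $\omega$ on $\mathcal{U}_N$ guarantees, by Fubini, that the inner integral is finite for a.e.\ $r>0$, so these slices are genuine $L^1$ functions of $\theta$.

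Next I would invoke the layer-cake representation: for each $r>0$ such that $\omega(r,\cdot)\in L^1(-\pi/N,\pi/N)$,
\begin{equation*}
  \int_{-\pi/N}^{\pi/N}\omega(r,\theta)\,d\theta = \int_0^{+\infty}\text{meas}\left\{\theta\in(-\pi/N,\pi/N):\omega(r,\theta)>\tau\right\}d\tau,
\end{equation*}
and the analogous identity holds for $\omega^\sharp(r,\cdot)$. By the very definition of the angular Steiner symmetrization $\omega^\sharp$, the superlevel set $\{\theta: \omega^\sharp(r,\theta)>\tau\}$ is the symmetric interval $(-\ell(r,\tau)/2,\ell(r,\tau)/2)$, where $\ell(r,\tau)=\text{meas}\{\theta'\in(-\pi/N,\pi/N):\omega(r,\theta')>\tau\}$. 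In particular the two superlevel sets have the same one-dimensional Lebesgue measure for every $\tau>0$, so the two layer-cake integrals coincide.

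The only step that requires a little care is the measurability and integrability of $\omega^\sharp$ itself (so that Fubini can be applied on both sides). This follows in a standard way from the fact that the distribution function $\tau\mapsto\ell(r,\tau)$ is jointly measurable in $(r,\tau)$ whenever $\omega$ is measurable, and from the equimeasurability on each slice, which immediately gives $\omega^\sharp\in L^1_{\mathrm{loc}}(\mathcal{U}_N)$ once $\omega$ is. Combining these ingredients, multiplying by $g(r)\,r$ and integrating in $r$ yields the claimed equality, which completes the proof.
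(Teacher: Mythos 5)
Your proof is correct and is essentially the argument the paper intends: the paper gives no written proof beyond invoking the layer-cake representation, and your slice-by-slice equimeasurability argument combined with Fubini against the $\theta$-independent weight $g(r)\,r$ is exactly that reasoning spelled out. No further comment is needed.
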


Recall that
\begin{equation*}
 {E}_s(\omega)=\frac{1}{2}\int_{\mathcal{U}_N}\int_{\mathcal{U}_N}K_s(x,x')\omega(x)\omega(x')dxdx',
\end{equation*}
where the kernel $K_s$ is equal to
\begin{equation*}
  K_s(x,x')=\sum_{k=0}^{N-1}G_s\left(x- Q_{\frac{2k\pi}{N}}x'\right).
\end{equation*}
In the polar coordinates $(r,\theta)$, the kernel $K_s$ can be expressed as
\begin{equation*}
  K_s(r,\theta,r',\theta')=V_s(r,r',\theta-\theta').
\end{equation*}
Given two fixed numbers $1/2<r\neq r'<3/2$, the map $\tau \mapsto V_s(r,r',\tau)$ is smooth on $(-\pi/N, \pi/N)$. Moreover, we have
\begin{itemize}
  \item [(i)]$V_s(r,r',-\tau)=V_s(r,r',\tau)$\ \ \ \text{whenever}\ \ $|\tau|<\pi/N$,
  \item [(ii)]$\partial_\tau V_s(r,r',\tau)<0$\ \ \ \ \ \ \ \ \ \ \ \ \ \ \text{whenever}\ \ $0<\tau<\pi/N$.
\end{itemize}
For proofs, we refer to \cite{Go}. Using these facts, one can easily get the following result which asserts that the energy ${E}_s$ is increased by the angular Steiner symmetrization.

\begin{lemma}\label{A2}
  Suppose non-negative $\omega \in L^\infty(\mathbb{R}^2)$ satisfies $\text{supp}(\omega)\subseteq S$. Then ${E}_s(\omega)\le {E}_s(\omega^\sharp)$.
\end{lemma}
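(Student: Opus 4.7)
The plan is to pass to polar coordinates, freeze the two radial variables, and reduce Lemma \ref{A2} to a one-dimensional rearrangement inequality in the angular variable. Writing $x=(r\cos\theta,r\sin\theta)$ and using the polar expression $K_s(x,x')=V_s(r,r',\theta-\theta')$ recorded above, one has
\begin{equation*}
  E_s(\omega) = \frac{1}{2}\int_0^\infty\!\!\int_0^\infty r\,r'\,T_\omega(r,r')\,dr\,dr',\qquad
  T_\omega(r,r') := \int_{-\pi/N}^{\pi/N}\!\!\int_{-\pi/N}^{\pi/N} V_s(r,r',\theta-\theta')\,\omega(r,\theta)\omega(r',\theta')\,d\theta\,d\theta'.
\end{equation*}
Because angular Steiner symmetrization is performed pointwise in $r$ and the weight $rr'$ is nonnegative, it suffices to verify $T_\omega(r,r')\le T_{\omega^\sharp}(r,r')$ for a.e.\ pair $(r,r')$ with $1/2<r\neq r'<3/2$.

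For such a fixed pair, I would exploit the hypothesis $\text{supp}(\omega)\subseteq S$, which forces the angular slices $\omega(r,\cdot)$ and $\omega^\sharp(r,\cdot)$ to be supported in $[-\pi/(2N),\pi/(2N)]$ (the latter since the centered rearrangement of a set of measure at most $\pi/N$ fits inside this interval). Consequently the argument $\theta-\theta'$ in the integrand of $T$ stays in $(-\pi/N,\pi/N)$, and on precisely this interval properties (i)-(ii) say that $\tau\mapsto V_s(r,r',\tau)$ is even and strictly decreasing in $|\tau|$, i.e.\ symmetric decreasing. Applying the layer-cake decomposition $\omega(r,\cdot)=\int_0^\infty\mathbf{1}_{A_t}\,dt$ and $\omega(r',\cdot)=\int_0^\infty\mathbf{1}_{B_u}\,du$, with $A_t,B_u\subset[-\pi/(2N),\pi/(2N)]$, and observing that the Steiner rearrangement of $\mathbf{1}_{A_t}$ is $\mathbf{1}_{A_t^\sharp}$ for the centered interval $A_t^\sharp$ of the same measure, it is enough to prove
\begin{equation*}
  \int_{A_t}\!\int_{B_u} V_s(r,r',\theta-\theta')\,d\theta\,d\theta' \;\le\; \int_{A_t^\sharp}\!\int_{B_u^\sharp} V_s(r,r',\theta-\theta')\,d\theta\,d\theta'.
\end{equation*}

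This last inequality is the classical one-dimensional Riesz rearrangement inequality for a symmetric decreasing kernel. To bring it to the standard form on $\mathbb{R}$ I would extend $V_s(r,r',\cdot)$ by the constant value $V_s(r,r',\pi/N)$ outside $(-\pi/N,\pi/N)$ and then subtract this constant; the resulting kernel is nonnegative, compactly supported, symmetric, and non-increasing on all of $\mathbb{R}$, while the subtracted constant contributes only a term proportional to $|A_t||B_u|=|A_t^\sharp||B_u^\sharp|$ which cancels on both sides. Theorem 3.9 of \cite{Lieb} then applies verbatim, and the original inequality follows after integrating against $rr'\,dr\,dr'$. The only technical care needed is this extension step, but it is legitimate precisely because the support condition $\text{supp}(\omega)\subseteq S$ keeps $\theta-\theta'$ inside the range $(-\pi/N,\pi/N)$ where (i)-(ii) hold; in particular no behavior of $V_s$ beyond this interval (and thus no question of its periodicity) ever enters the argument.
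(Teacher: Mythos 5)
Your proof is correct and follows the route the paper intends: the paper itself offers no detailed argument for Lemma \ref{A2}, merely asserting that it follows from properties (i)--(ii) of $V_s$, and your write-up supplies exactly the missing details (reduction to fixed $(r,r')$, layer-cake decomposition, and the one-dimensional Riesz rearrangement inequality after extending the kernel by a constant, with the support condition $\text{supp}(\omega)\subseteq S$ keeping $\theta-\theta'$ inside $(-\pi/N,\pi/N)$ so that only the stated monotonicity of $V_s$ is ever used). No gaps.
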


\phantom{s}
 \thispagestyle{empty}

\end{document}